\definecolor{mylinkcolor}{rgb}{0.65,0.0,0.0}
\definecolor{myurlcolor}{rgb}{0.0,0.0,0.65}
\def\Z{\mathbb{Z}}
\def\C{\mathbb{C}}
\def\Q{\mathbb{Q}}
\def\P{\mathbb{P}}
\def\Qbar{\overline{\Q}}
\def\kbar{\overline{k}}
\def\Kbar{\overline{K}}
\def\F{\mathbb{F}}
\def\Gal{\operatorname{Gal}}
\def\GL{\operatorname{GL}}
\def\SL{\operatorname{SL}}
\def\det{\operatorname{det}}
\def\im{\operatorname{im}}
\def\Aut{\operatorname{Aut}}
\def\mc#1{\href{https://beta.lmfdb.org/ModularCurve/Q/#1/}{\texttt{#1}}}
\newcommand{\GalQ}{{\Gal}(\Qbar/\Q)}
\newcommand{\Zhat}{{\widehat{\Z}}}
\newcommand{\GalK}{{\Gal}(\Kbar/K)}
\newcommand{\calF}{{\mathcal F}}
\newcommand{\intersect}{\cap} 
\theoremstyle{plain}
\newtheorem{theorem}{Theorem}
\newtheorem{Corollary}[theorem]{Corollary}
\newtheorem{lemma}[theorem]{Lemma}
\newtheorem{proposition}[theorem]{Proposition}
\theoremstyle{definition}
\newtheorem{definition}[theorem]{Definition}
\newtheorem{example}[theorem]{Example}
\theoremstyle{remark}
\newtheorem{remark}[theorem]{Remark}
\crefname{section}{§}{§§}
\crefname{lemma}{Lemma}{Lemmas}
\crefname{equation}{equation}{equations}
\crefname{theorem}{Theorem}{Theorems}
\crefname{proposition}{Proposition}{Propositions}
\crefname{Corollary}{Corollary}{Corollaries}
\title{modular curves of prime power level with infinitely many quadratic points}
\author[Michael Cerchia and Rakvi]{Michael Cerchia and Rakvi}
\email{mjcerchia@gmail.com}
\email{rr657@cornell.edu}  
\date{\today}
\subjclass[2010]{Primary 11G05; Secondary 11F80.}
\begin{document}

\begin{abstract} We completely determine the $1085$ open subgroups $H$ of $\GL_2(\Zhat)$ of prime power level that satisfy $-I \in H$ and $\det(H)=\Zhat^{\times}$ for which the corresponding modular curve $X_H$ has infinitely many quadratic points. When $g(X_H)\geq 2$ this is equivalent to determining all the hyperelliptic modular curves of prime power level and all the bielliptic modular curves of prime power level that have a degree two map to a positive rank elliptic curve. From the moduli perspective, this means that there are exactly 1085 subgroups $H$ of $\GL_2(\Zhat)$ of prime power level that contain $-I$ and have full determinant for which there are infinitely many elliptic curves $E/K$ over quadratic extensions such that $\rho_E(G_k)$ is conjugate to a subgroup of $H$.
\end{abstract}

\maketitle

\section{Introduction}

Let $X$ be a smooth projective curve defined over $\Q$. We say that $X$ is \textit{positive rank bielliptic} if there exists a degree two map over $\Q$ from $X$ to a positive rank elliptic curve, and we say that X is \textit{hyperelliptic} if it admits a degree two map over $\Q$ to $\P^1$. It follows from the work Hindry \cite{hindry1987points} (Remarque, p. 221) and Harris--Silverman \cite{silvermanharris} (Corollary $3$) that when $g_X\geq 2$ the set of quadratic points \[\Gamma_2(X,\Q):=\bigcup_{[F:\Q]\leq2}X(F)\] is infinite if and only if $X$ is either positive rank bielliptic or hyperelliptic. 

A subgroup $H\subseteq\GL_2(\Zhat)$ that contains $-I$ and has full determinant has an associated modular curve $X_H$ whose \textit{$\GL_2$-level} (or, more simply, \textit{level}) is defined to be the least positive integer $N$ such that $H$ contains the kernel of the reduction map $\GL_2(\Zhat)\to\GL_2(\Z/N\Z)$. In this paper, we completely classify the hyperelliptic and positive rank bielliptic modular curves of prime power level. This gives us a complete classification of the modular curves of prime power level that have infinitely many quadratic points. 

A key input is an a priori level bound in the prime-power case. We state it more precisely in the following theorem. 

\begin{theorem}[]\label{thm:boundB}
Let $H \subseteq \GL_2(\Zhat)$ be open with $-I \in H$ and $\det(H)=\Zhat^\times$, and assume that the level
of $H$ is a prime power. If $X_H$ has infinitely many quadratic points, then the level of $H$ is at most 131.
\end{theorem}

Theorem~\ref{thm:boundB} is proved in Section~\ref{sec:enumerate} by combining
Zywina's classification \cite{Zywina_lowgonality} (which bounds the relevant $\SL_2$-levels over $\C$)
with our Proposition~\ref{prop:boundingthelevel}, in which we convert $\SL_2$-level bounds into  $\GL_2$-level bounds.
Consequently, the classification reduces to studying the quadratic points of finitely many modular curves. We describe our strategy in \ref{strategy} below.

\subsection{Motivation and Previous Work}
Previous work on the classification of bielliptic modular curves has focused on the classical cases, such as Bars's work on $X_0(N)$ \cite{barsx0} and subsequent work on various quotients of $X_0(N)$ for varying levels (including \cite{biellquot}, \cite{biellquotsqfree}, \cite{biellatkins}). Similar results have also been obtained for $X_1(N)$ \cite{biellx1} and intermediate modular curves of the form $X_{\Delta}(N)$ where $\Delta$ is a subgroup of $(\Z/N\Z)^*$ \cite{biellint}. These results build on the much earlier work on the classification of hyperelliptic modular curves, most notably Ogg's \cite{ogg} classification of integers $N\geq 1$ for which $X_0(N)$ is hyperelliptic. 

Our motivation for studying quadratic points on more general modular curves comes from the following. Let $N\geq 1$ be an integer. To an elliptic curve $E$ over a number field $K$, one can associate a representation \[\rho_{E,N}:\Gal(\overline{K}/K)\to\GL_2(\Z/N\Z)\] induced by the action of the absolute Galois group $G_K:=\Gal(\overline{K}/K)$ on the $N$-torsion subgroup $E[N]$ of $E(\overline{K})$. After choosing compatible bases for the torsion subgroups $E[N]$ for $N\geq 1$, we can construct an adelic representation \[\rho_E:G_K\to\GL_2(\Zhat).\]

The following program on the classification of such representations -- often called ``Program B" -- was proposed by Mazur \cite{mazur}:
\begin{quote}
    ``Given a number field $K$ and a subgroup $H$ of $\GL_2(\Zhat) \simeq \prod_p \GL_2 (\Z_p)$, classify all elliptic curves $E/K$ whose associated Galois representation on torsion points maps $\Gal(\overline{K}/K)$ into $H \leq \GL_2 (\Zhat)$.''
\end{quote}

After the work of Rouse--Zureick-Brown \cite{rzb} and Rouse--Sutherland--Zureick-Brown\cite{rszb} that classified, respectively, the $2$-adic and $\ell$-adic (for $\ell\geq 3$) images of Galois for elliptic curves over $\Q$, the natural next case of Mazur's program is to determine the possible $\ell$-adic images of Galois as $K$ varies over all quadratic extensions. A modular curve $X_H$ loosely parametrizes elliptic curves whose associated Galois representation has image contained in $H$, and so the first step of this case of the project is completely determine which subgroups $H\subset\GL_2(\Zhat)$ (that contain $-I$ and have full determinant) with prime power level correspond to modular curves with infinitely many quadratic points. The analogous step over the rationals was completed by Sutherland and Zywina \cite{suthzyw}, who gave a complete list of the modular curves of prime power level with an infinitely many rational points. This step is necessary to construct a tree of \textit{maximally arithmetic subgroups}, as done in \cite{rzb} for the $2$-adic case over $\Q$ and in \cite{rszb} in the $\ell$-adic case, after which one is left with a finite number of modular curves with a finite number of points to analyze. 

\subsection{Strategy and Outline}\label{strategy}
We briefly summarize the computational steps.  After bounding the possible genus and level of a hyperelliptic modular curves of $\ell$-power level, we are left with a finite list of candidates. For most of these curves, we can use data from the LMFDB to quickly deduce whether or not they are in fact hyperelliptic. This leaves us with seven genus one curves, which are handled in \ref{remaininggenusone} by using techniques from Galois cohomology, and 60 curves of genus greater than or equal to two that we handle in Section \ref{sec:hyp}. Section \ref{remaininggenusone} also gives the first example (to our knowledge) of a genus one modular curve whose index is greater than its period. We then focus on the question of whether or not a candidate modular curve of $\ell$-power level is positive rank bielliptic. In principle, determining which of these curves are positive rank bielliptic can be done by finding all involutions $\iota$ of a given curve $X$, determining if the genus of $X/\langle\iota\rangle$ is one, and if it is, determining the rank of $X/\langle\iota\rangle$. In practice, this is often computationally difficult. Often (particularly when the genus is higher) it is slow to compute the automorphism group of these curves over $\Q$, and even when we can compute them, the genus one quotients that sometimes arise often have complicated models. On such models, it is difficult to find points (let alone determine ranks) and so we are led to construct simpler models of these quotient curves. We discuss this further in sections \ref{rankone} and \ref{nonrankone}, where we outline the techniques used to prove whether a given curve is positive rank bielliptic or not. These sections also contain examples of the application of these techniques. In the course of this classification, we prove the following.

\begin{theorem}
    Up to conjugacy, there are 1085 open subgroups $H$ of $\GL_2(\Zhat)$ of prime power level satisfying $-I\in H$ and $\det(H)=\Zhat^{\times}$ for which $X_H$ has infinitely many quadratic points. Of these 1085 groups,
    \begin{itemize}
        \item There are 265 groups of genus 0. Their labels are given in Table \ref{tab:genus 0}.
        \item There are 336 groups of genus 1. Their labels are given in Table \ref{tab:genus 1}.
        \item There are 306 groups of genus at least two that are hyperelliptic. Their labels are given in Table \ref{tab:hyp}.
        \item There are 178 groups of genus bigger than or equal to two that are non-hyperelliptic and positive rank bielliptic. Their labels are given in Tables \ref{tab:LMFDB deductions} and \ref{tab:posrankbi}. 
    \end{itemize}
    
\end{theorem}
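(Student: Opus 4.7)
The plan is to organize the proof by the genus $g := g(X_H)$ and exploit Silverman--Harris whenever $g \geq 2$, reducing in each case to a finite candidate list whose entries are then decided by a combination of LMFDB lookups and targeted computations. I would begin by enumerating, up to conjugacy, all open subgroups $H \subseteq \GL_2(\Zhat)$ of prime-power level with $-I \in H$ and $\det(H) = \Zhat^{\times}$, sorted by $g(X_H)$. The genus $0$ case is automatic, since a smooth genus $0$ curve over $\Q$ always acquires infinitely many points over a suitable quadratic extension; simply listing such $H$ produces Table \ref{tab:genus 0} with its 265 entries.

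For $g(X_H) = 1$, the curve $X_H$ has infinitely many quadratic points iff its Jacobian acquires positive rank over some quadratic field, equivalently iff some quadratic twist of the Jacobian has positive rank over $\Q$. In almost all cases this is directly read from LMFDB rank data for $X_H$ and its quadratic twists. A small residue of seven cases is genuinely delicate and is resolved in Section \ref{remaininggenusone} by Galois-cohomological descent, where one must carefully distinguish the index of the torsor from its period; this yields the 336 entries of Table \ref{tab:genus 1}.

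For $g \geq 2$, Silverman--Harris splits the task into two finite enumerations. First, I would derive an Ogg/Abramovich-type gonality bound to bound the index $[\GL_2(\Zhat):H]$, and hence the level and genus, of any hyperelliptic $X_H$ of prime-power level. Most candidates can be decided directly from the canonical-model data in LMFDB; the remaining sixty are handled by explicit model computations in Section \ref{sec:hyp}, producing the 306 entries of Table \ref{tab:hyp}. Second, I would bound the level and genus of any positive rank bielliptic $X_H$ by a Castelnuovo--Severi argument applied to a hypothetical degree-two map onto a genus-$1$ quotient, again obtaining a finite candidate list.

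The main obstacle lies in this last stage. For each remaining candidate $X_H$ I must compute the geometric automorphism group over $\Q$, enumerate its $\Q$-rational involutions $\iota$, form each quotient $X_H/\langle \iota \rangle$, test whether that quotient has genus one, and then determine its Mordell--Weil rank. Computing $\Aut(X_H)_{\Q}$ is already slow for the higher-genus candidates, and the naive models for $X_H/\langle \iota \rangle$ coming directly from the canonical embedding are typically too unwieldy for descent to terminate. The remedy is the model-simplification and rank-determination machinery developed in Sections \ref{rankone} and \ref{nonrankone}, which is run candidate by candidate to sort the curves into positive-rank-bielliptic and not. Executing this yields the 178 entries distributed across Tables \ref{tab:LMFDB deductions} and \ref{tab:posrankbi}, and summing the four contributions $265 + 336 + 306 + 178 = 1085$ gives the claimed count.
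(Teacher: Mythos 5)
Your genus-one criterion is genuinely wrong, and it is wrong at exactly the point where the paper has to work hardest. You claim that a genus-one $X_H$ has infinitely many quadratic points if and only if some quadratic twist of its Jacobian has positive rank over $\Q$. The right-hand condition is satisfied by \emph{every} elliptic curve over $\Q$ (for $E\colon y^2=f(x)$, the points $(x,\sqrt{f(x)})$ for varying $x\in\Q$ give non-torsion points on infinitely many twists), so your test would declare every one of the 342 genus-one candidates to have infinitely many quadratic points, whereas only 336 do. The correct criterion is the existence of a closed point of degree at most two on $X_H$ itself, i.e.\ $X_H$ has a rational point or admits a degree-two map to $\P^1_\Q$; this is a torsor-theoretic condition on $X_H$, not a rank condition on $\operatorname{Jac}(X_H)$ or its twists, and it is precisely what can fail when $X_H$ is a nontrivial torsor whose degree-two rational divisor \emph{classes} contain no rational divisors. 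Proposition \ref{prop:index4} is the paper's witness: the curve \texttt{8.48.1.bi.1} has genus one, its Jacobian certainly has positive-rank quadratic twists, and yet it has no quadratic points at all (index $4$, period $2$). Under your criterion the six excluded genus-one curves would never be detected, and the ``seven delicate cases'' you defer to \cref{remaininggenusone} would not even be flagged as problematic, because your stated test answers ``yes'' for them; the count 336 cannot be recovered this way.

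A secondary gap is in the enumeration step for $g\ge 2$. An Abramovich/Ogg-type gonality bound controls the index of the congruence subgroup $\Gamma_H$, hence the $\SL_2$-level and the genus, but the theorem is stated in terms of the $\GL_2$-level of $H$, which is \emph{not} determined by $\Gamma_H$: groups with $\SL_2$-level $64$ can a priori have $\GL_2$-level $256$ or $512$, and bounding this is the content of Proposition \ref{prop:boundingthelevel} (via the cyclotomic action and $\Aut(X_{\Gamma_H},\pi_{\Gamma_H})$). Your phrase ``bound the index $[\GL_2(\Zhat):H]$, and hence the level'' silently crosses this bridge. It can be repaired — with full determinant the index of $H$ equals the index of $\Gamma_H$, and inside a single $\GL_2(\Z_\ell)$ a bounded index does force a bounded level by pro-$\ell$ group theory — but some such argument (or the paper's Proposition \ref{prop:boundingthelevel}) must be supplied before you can invoke the completeness of the LMFDB list of modular curves and claim your candidate set is finite and fully enumerated.
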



\vspace{-4pt}

The following corollary follows from the moduli interpretation of modular curves. 
\begin{Corollary}
    There are exactly 1085 subgroups $H$ of $\GL_2(\Zhat)$ that contain $-I$, have full determinant and of prime power level for which there are infinitely many elliptic curves $E/K$ over quadratic extensions, with distinct $j$-invariants, such that $\rho_E(G_K)$ is conjugate to a subgroup of $H$.
\end{Corollary}

In a follow-up work, the authors are working to find all quadratic points on $X_H$ where $H$ is an open subgroup of $\GL_2(\Zhat)$ of prime power level for primes between 2 and 131 satisfying $-I\in H$ and $\det(H)=\Zhat^{\times}.$ With some group theory, it is then possible to describe all $\ell$-adic images for $2 \le \ell \le 131$ for $E/K.$  

\subsection{Comments on code} This paper has a computational component. Many of the candidate curves we consider have arguments that rely on \texttt{MAGMA} \cite{MR1484478}. Code verifying our claims is available at the Github repository 
\begin{center}
\url{https://github.com/mjcerchia/quadratic-points-on-modular-curves}.
\end{center}
We used V2.29-5 of \texttt{MAGMA} for our computations.
We note that our results rely on beta version of the LMFDB. Specifically, we rely on completeness of the database for modular curves of prime power levels up to 335. We use the models if they are given on LMFDB. We also use the data on Jacobian decomposition.

\section{Acknowledgments}
We thank Jeremy Rouse, David Zureick-Brown, and David Zywina for their helpful
suggestions and several valuable conversations. We thank Edgar Costa for helping with computing an involution for \mc{27.108.7.a.1}. We also thank Maarten Derickx for suggesting a proof strategy for Proposition \ref{prop:index4} and for giving us comments on an earlier draft of this paper. Finally, we thank an anonymous referee for helpful comments, which led to an improvement of the exposition of this paper. All computations were done either in \texttt{MAGMA} or in \texttt{SAGE} \cite{sagemath}.

\section{Background and preliminary results}\label{background}

We present some definitions and preliminary results here that will be needed in later sections. 
\subsection{Subgroups of $\GL_2(\Zhat)$ and Congruence Subgroups}

Let $H$ be an open subgroup of $\GL_2(\Zhat).$ 
We say that $N$ is the \textit{$\GL_2$-level} (or, more simply, \textit{level}) of $H$ if $N$ is the smallest integer such that $H$ contains the kernel of the natural projection map $\pi_N:\GL_2(\hat{\Z})\to\GL_2(\Z/N\Z)$. If $H$ is an open subgroup of $\GL_2(\Zhat)$ of $\GL_2$ level $N$ we will denote $\pi_N(H)$ by $H_N.$

For any positive integer $M$, let $\phi_M$ be the natural projection map $\phi_M:\SL_2(\Z)\to\SL_2(\Z/M\Z)$. Let $\Gamma$ be a congruence subgroup of $\SL_{2}(\Z).$ We say that $M$ is the \textit{$\SL_2$-level} of $\Gamma$ if it is the smallest positive integer for which $\Gamma$ contains the kernel of $\phi_M.$ We will write $\Gamma_M$ to denote $\phi_M(\Gamma).$

For an open subgroup $H$ of $\GL_2(\Zhat)$ of $\GL_2$-level $N$ there is a corresponding congruence subgroup $\Gamma_H$ which can be defined as $\Gamma_H \coloneqq \phi_N^{-1}(H_N \intersect \SL_2(\Z/N\Z)).$ 

\begin{lemma}
 Let $H$ be an open subgroup of $\GL_2(\Zhat)$ of $\GL_2$-level $N$. Then the $\SL_2$ level of $\Gamma_H$ divides $N.$
\end{lemma}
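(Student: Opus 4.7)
The plan is to reduce the statement to a general divisibility property of kernels of congruence reduction maps. By the very definition $\Gamma_H = \phi_N^{-1}(H_N \cap \SL_2(\Z/N\Z))$, the preimage automatically contains $\ker \phi_N$. Hence $N$ lies in the set
\[
\mathcal{M} := \{M \geq 1 : \ker \phi_M \subseteq \Gamma_H\},
\]
and the $\SL_2$ level $L$ of $\Gamma_H$ is by definition $\min \mathcal{M}$. I would prove $L \mid N$ by establishing that $\mathcal{M}$ is closed under taking gcds.

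The key lemma I would establish is that for any positive integers $M_1, M_2$,
\[
\ker\phi_{M_1} \cdot \ker\phi_{M_2} \;=\; \ker\phi_{\gcd(M_1, M_2)}.
\]
The inclusion $\subseteq$ is immediate, since $\gcd(M_1,M_2)$ divides each $M_i$, so $\ker\phi_{\gcd(M_1,M_2)}$ contains each $\ker\phi_{M_i}$ and hence their product. For the reverse inclusion I would invoke strong approximation (equivalently, surjectivity of $\SL_2(\Z) \to \SL_2(\Z/M\Z)$ for every $M$). Given $g \in \ker\phi_{\gcd(M_1, M_2)}$, I would prescribe local images prime by prime mod $\mathrm{lcm}(M_1,M_2)$: at each prime $p$, if $v_p(M_1) \leq v_p(M_2)$ I send $g_2$ to $I$ and $g_1$ to the image of $g$ (which is $\equiv I \pmod{p^{v_p(M_1)}}$ by hypothesis on $g$), and if $v_p(M_1) > v_p(M_2)$ I reverse the roles. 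Surjectivity of the reduction map lifts these prescribed residues to $g_1, g_2 \in \SL_2(\Z)$ with $g_1 \in \ker\phi_{M_1}$, $g_2 \in \ker\phi_{M_2}$, and $g_1 g_2 \equiv g$ modulo $\mathrm{lcm}(M_1, M_2)$; any residual discrepancy lies in $\ker\phi_{\mathrm{lcm}(M_1,M_2)} \subseteq \ker\phi_{M_1} \cap \ker\phi_{M_2}$ and can be absorbed into $g_1$ (or $g_2$).

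With the lemma in hand, the theorem follows at once: since $\Gamma_H$ contains both $\ker\phi_L$ (by definition of $L$) and $\ker\phi_N$ (by construction), it contains the product $\ker\phi_L \cdot \ker\phi_N = \ker\phi_{\gcd(L, N)}$. Minimality of $L$ then forces $L \leq \gcd(L, N)$, and combined with the trivial divisibility $\gcd(L,N) \mid L$ this yields $L = \gcd(L, N)$, i.e.\ $L \mid N$.

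The only real obstacle is the strong approximation step in the key lemma, and even this is classical; one could alternatively quote closure of congruence levels under gcd from a standard reference on congruence subgroups of $\SL_2(\Z)$ rather than reprove it here.
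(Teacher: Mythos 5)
Your proposal is correct, and it is in fact more complete than the argument printed in the paper. The paper's proof consists of the single observation that $\Gamma_H$ contains $\ker\phi_N$ and then asserts ``so $M$ divides $N$''; but minimality of the $\SL_2$ level by itself only gives $M\leq N$, and the divisibility really rests on the standard fact that the set of moduli $M'$ with $\ker\phi_{M'}\subseteq\Gamma_H$ is closed under gcd. That is exactly the content of your key lemma $\ker\phi_{M_1}\cdot\ker\phi_{M_2}=\ker\phi_{\gcd(M_1,M_2)}$, whose proof via the Chinese Remainder Theorem and surjectivity of $\SL_2(\Z)\to\SL_2(\Z/M\Z)$ (strong approximation) is correct as you sketch it: the prescribed local residues lift to $g_1\in\ker\phi_{M_1}$, $g_2\in\ker\phi_{M_2}$ with $g_1g_2\equiv g \pmod{\operatorname{lcm}(M_1,M_2)}$, and the discrepancy lies in $\ker\phi_{\operatorname{lcm}(M_1,M_2)}$, which sits inside both kernels and can be absorbed. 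So your route starts from the same observation as the paper ($\ker\phi_N\subseteq\Gamma_H$ by construction of $\Gamma_H$) but supplies the Wohlfahrt-type gcd-closure step that the paper leaves implicit; alternatively, as you note, one could simply cite this closure property from the literature on congruence subgroups rather than reprove it.
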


\begin{proof}
    Let $M$ be the $\SL_2$ level of $\Gamma_H$. By definition of $\Gamma_H$, it contains the kernel of $\phi_N$, and so $M$ divides $N.$ 
\end{proof}

\begin{remark}
    The $\GL_2$ level of $H$ can be arbitrarily large multiple of the $\SL_2$ level of $\Gamma_H$. Consider the following example. Let $d$ be a square free positive integer. Let $\epsilon \colon \GL_2(\Zhat) \to \{\pm 1\}$ be the composition of the reduction map $\GL_2(\Zhat) \to \GL_2(\Z/2\Z)$ and the signature map $\GL_2(\Z/2\Z) \to S_3 \to \{ \pm 1\}.$ Let $\chi_d \colon \GalQ \to \Gal(\Q(\sqrt{d})/\Q) \to \{\pm 1\}$ be the cyclotomic character. Define $H_d \coloneqq \{A \in \GL_2(\Zhat)~|~\epsilon(A)=\chi_d(det(A)\}.$ Then the $\SL_2$ level of $\Gamma_H$ is equal to two for all $d$ but the $\GL_2$ level of $H_d$ is a multiple of $d$.
\end{remark}

\subsection{Modular functions}

Fix a positive integer $M.$ 

The group $\SL_2(\Z)$ acts on the extended upper half plane by linear fractional transformations. Let $\Gamma(M)$ be the kernel of $\phi_M.$ After adding cusps to the quotient of upper half plane by $\Gamma(M)$, we get a smooth compact Riemann surface which we denote by $X_M.$ 

Let $\calF_M$ denote the field of meromorphic functions of the Riemann surface $X_M$ whose $q$-expansion has coefficients in the $M$-th cyclotomic field which we will denote by $K_{M}$. For $M=1$, we have $\calF_1=\Q(j)$ where $j$ is the modular $j$-invariant. The first few terms of $q$-expansion of $j$ are $q^{-1}+744+196884q+21493760q^2+\dotsb.$
If $M'$ is a divisor of $M$, then $\calF_{M'} \subseteq \calF_M.$ In particular, we have that $\calF_1 \subseteq \calF_M.$

There is a natural action of $\SL_2(\Z/M\Z)$ on $F_M.$ With respect to this action, we have \[\mathcal{F}_M^{\SL_{2}(\Z/M \Z)/\{\pm I\}}=K_M(j).\] For more details on this, please refer to Chapter $6$ in \cite{MR1291394}.

\subsection{Modular Curves}\label{sec:modularcurves}

The aim of this section is to provide an overview of modular curves and its properties that are relevant to us. For a further background on this topic the reader is encouraged to read chapter $4$, section 3 of \cite{MR0337993}.

Let $H$ be an open subgroup of $\GL_2(\Zhat)$ that contains $-I$ of $\GL_2$ level $N$. There is a natural isomorphism between $\Gal(K_N/\Q)$ and $(\Z/N\Z)^{\times}.$ 

Let $K_H$ be the unique subfield of $K_N$ such that $\Gal(K_N/K_H)$ is isomorphic to $\det(H_N)$. For example, if $\det(H)=\Zhat^{\times}$, then $K_H=\Q.$ Associated to $H$ there is a nice (smooth, projective, geometrically integral) curve $X_H$ which is defined over $K_H$ and which parametrizes elliptic curves with $H$-level structure. Roughly, $H$ parametrizes elliptic curves whose adelic Galois image lands in $H$. We call $X_H$ the modular curve associated to $H$.

For a congruence subgroup $\Gamma$ of $\SL_2$ level $M$ that contains $-I$, $X_{\Gamma}$ is the nice curve over $K_M$ whose function field is $\calF_M^{\Gamma_M}.$ The map $\pi_\Gamma \colon X_{\Gamma} \to \P^1_{K_M}$ is the morphism corresponding to the inclusion $K_M(j) \subseteq \calF_M^{\Gamma_M}.$ Define $\Aut(X_{\Gamma},\pi_{\Gamma}) \coloneqq \{f \colon X_{\Gamma} \to X_{\Gamma} | \pi_{\Gamma} \circ f = \pi_{\Gamma}\}$. It is straightforward to check that $\Aut(X_{\Gamma},\pi_{\Gamma})$ is a group defined over $K_M$.

\begin{remark}
    For convenience, we sometimes ascribe geometric terms to a subgroup $H$ rather than the corresponding modular curve $X_H$. That is, we may call a group $H$ ``hyperelliptic" or ``positive rank bielliptic" or refer to its genus.
\end{remark}

\begin{theorem}
  Let $H$ be an open subgroup of $\GL_2(\Zhat)$ that contains $-I$. Let $N$ be its $\GL_2$ level. Let $X_H$ be the modular curve associated to $H$. The following properties hold:

    \begin{itemize}
        \item \label{thm:modcurves1}If $H \subseteq H'$, then there is a morphism $X_H \to X_{H'}.$ 
        \item \label{thm:modcurves2}There is a natural map $\pi_H \colon X_H \to \P^1$ associated to $X_H.$
        \item \label{thm:modcurves3}For a non CM elliptic curve $E$ defined over a number field $K$ we have $\rho_E(\GalK)$ is conjugate to a subgroup of $H$ if and only if $K$ contains $K_H$ and the $j$-invariant of $E$ lies in $\pi_H(X_H(K)).$
        \item \label{thm:modcurves4} There exists an isomorphism $f \colon X_{\Gamma_H} \to X_{H}$ over $K_N$ such that $\pi_H \circ f = \pi_{\Gamma_H}.$ 
        
    \end{itemize}  
\end{theorem}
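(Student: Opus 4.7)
The plan is to derive each of the four properties by reducing, via the correspondence $H \leftrightarrow \Gamma_H$, to facts about the classical modular curves $X_{\Gamma}$ developed in Chapter 6 of Shimura \cite{MR1291394}. The keystone is item (4): once we know that over $K_N$ the curve $X_H$ is identified with $X_{\Gamma_H}$ compatibly with the $j$-line map, the remaining items follow by Galois descent and standard function-field arguments. To establish (4), I would show that both curves have function field $\calF_N^{\Gamma_H}$: by definition of $X_{\Gamma_H}$ on one side, and on the other side because after base change to $K_N$ the moduli problem classifying elliptic curves with $H$-level structure reduces to level-$N$ structures taken modulo $H_N \cap \SL_2(\Z/N\Z)$. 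Compatibility of the $j$-line maps $\pi_H$ and $\pi_{\Gamma_H}$ is built into the construction.

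For item (1), an inclusion $H \subseteq H'$ induces $\Gamma_H \subseteq \Gamma_{H'}$, hence an inclusion of fixed fields $\calF_L^{\Gamma_{H'}} \subseteq \calF_L^{\Gamma_H}$ for a common level $L$; this yields a map $X_{\Gamma_H} \to X_{\Gamma_{H'}}$ which transfers through (4) and then descends to the common base using the inclusion $K_{H'} \subseteq K_H$ implied by $\det(H_N) \subseteq \det(H'_N)$. For item (2), the natural $j$-line map $\pi_H$ corresponds on the congruence-subgroup side to the inclusion $K_M(j) = \calF_M^{\SL_2(\Z/M\Z)/\{\pm I\}} \subseteq \calF_M^{\Gamma_H}$, which produces $\pi_{\Gamma_H} \colon X_{\Gamma_H} \to \P^1_{K_M}$ and is then transported through (4) before being descended to $K_H$ via the determinant condition.

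The substantive content is item (3). Given a non-CM $E/K$ with $\rho_E(\GalK)$ conjugate into $H$, choosing compatible bases for the torsion subgroups $E[N]$ realizes $E$ together with its $H$-level structure as a $K$-point of $X_H$ mapping under $\pi_H$ to $j(E)$; that $K$ contains $K_H$ follows from the Weil pairing, which forces $\det \rho_E$ to be the cyclotomic character, so the Galois action on the scheme of $H$-level structures factors through $(\Z/N\Z)^\times / \det(H_N) \cong \Gal(K_H/\Q)$. The converse is where the non-CM hypothesis enters: a $K$-point of $X_H$ over $j(E)$ produces, via (4), a level structure on a twist of $E$ whose Galois orbit lies in $H_N$; the non-CM assumption is precisely what ensures the fiber of $\pi_H$ over $j(E)$ is in bijection with the $H$-orbits of level structures on $E$ itself, as extra automorphisms of CM curves would otherwise identify distinct orbits. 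The chief obstacle throughout is the bookkeeping of the two descents (first to $K_N$ via item (4), then to $K_H$) and the correct invocation of the non-CM hypothesis in the reverse direction of (3); the detailed verifications are standard and can be extracted from Deligne--Rapoport and from Zywina's treatment of modular curves, which I would cite in lieu of reproducing the arguments in full.
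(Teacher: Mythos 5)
The paper gives no argument of its own here: it simply cites Section 2.2 of Rouse--Sutherland--Zureick-Brown for the first three items and Theorem 6.6 of Chapter 6 of Shimura for the fourth. Your reconstruction of the standard arguments — identifying $(X_H)_{K_N}$ with $(X_{\Gamma_H})_{K_N}$ through the common function field $\calF_N^{H_N\cap\SL_2(\Z/N\Z)}$ compatibly with $K_N(j)$, and then obtaining (1) and (2) by transport through this identification and Galois descent — is a reasonable expansion of what the paper outsources, and items (1), (2), (4) are fine at this level of detail (with the final verifications deferred to the same body of literature the paper cites).

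Item (3), which you correctly single out as the substantive content, has a genuine gap in the converse direction. A non-cuspidal $K$-point of $X_H$ above $j(E)$ (with $K\supseteq K_H$) produces an elliptic curve $E'/K$ with $j(E')=j(E)$ and $\rho_{E'}(\GalK)$ conjugate into $H$; to pass from $E'$ back to $E$ one needs two things: (a) that $E'$ is a \emph{quadratic} twist of $E$, which is where the non-CM hypothesis (really $j(E)\neq 0,1728$, so that $\Aut(E_{\Kbar})=\{\pm 1\}$) enters, and (b) the hypothesis $-I\in H$: writing $\rho_{E'}=\chi\otimes\rho_E$ for a quadratic character $\chi$, one concludes $\rho_E(\GalK)\subseteq \pm H=H$. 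Your sketch attributes the entire step to the non-CM assumption (``extra automorphisms of CM curves would otherwise identify distinct orbits'') and never invokes $-I\in H$, which is a stated hypothesis of the theorem and is essential: without it the claimed equivalence is false, as one only obtains the conclusion for some quadratic twist of $E$. Moreover, CM curves with $j\neq 0,1728$ have no extra automorphisms, so the mechanism you describe is not the one actually at work; the automorphism issue is confined to $j\in\{0,1728\}$, where non-quadratic twists (and the failure of the moduli interpretation of points) are the real obstruction.
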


\begin{proof}
    Please see Section 2.2 of \cite{MR4833876} for first three properties and Theorem $6.6$, Chapter $6$ in \cite{MR1291394} for a proof of last property.
\end{proof}

We use the following proposition to bound $\GL_2$-level of modular curves. Our proof follows the strategy of Lemma 8.1 \cite{MR4730250}.
\begin{proposition}\label{prop:boundingthelevel}

Let $H$ be an open subgroup of $\GL_2(\Zhat)$ that contains $-I$ and has full determinant. Let $X_H$ be the associated modular curve. Let $N$ be the $\GL_2$ level of $H$. Let $M$ be the $\SL_2$ level of $\Gamma_H.$ We also assume that $N$ and the $M$ have same prime factors. Then, there exists an upper bound $b$ for $N$ that can be explicitly computed from the order of $\Aut(X_{\Gamma_H},\pi_{\Gamma_H})$ and $M.$ More explicitly, $b$ is the least common multiple of orders of elements $a \in \Aut(X_{\Gamma_H},\pi_{\Gamma_H})$ that divide some power of $M$. 
\end{proposition}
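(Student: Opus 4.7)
The plan is to reduce to the prime-power case and then bound the gap between the $\SL_2$ level and the $\GL_2$ level by a cocycle/conductor argument valued in a group controlled by $|\Aut(X_{\Gamma_H}, \pi_{\Gamma_H})|$.

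Set $M$ to be the $\SL_2$ level of $\Gamma_H$. The hypothesis that $N$ and $M$ share prime factors lets us localize: $\GL_2(\Zhat) = \prod_p \GL_2(\Z_p)$, $\SL_2(\Z/M\Z) \cong \prod_{p \mid M} \SL_2(\Z/p^{a_p}\Z)$, and $\Aut(X_{\Gamma_H}, \pi_{\Gamma_H})$ factors correspondingly as $\prod_{p \mid M} A_p$. It therefore suffices to bound, for each $p \mid M$ with $a := v_p(M)$ and $b := v_p(N)$, the exponent $b$ in terms of $a$ and $|A_p|$; the overall bound on $N$ follows by taking the product. Fix such $p$, let $\overline{\Gamma} := H \cap \SL_2(\Z_p)$, which contains $\ker(\SL_2(\Z_p) \to \SL_2(\Z/p^a\Z))$ by hypothesis, and consider
\[1 \to \overline{\Gamma} \to H \xrightarrow{\det} \Z_p^\times \to 1.\]

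Conjugation of $\overline{\Gamma}$ by elements of $H$ factors through $\Z_p^\times \cong H/\overline{\Gamma}$ (modulo inner twists), giving a continuous homomorphism $\psi \colon \Z_p^\times \to \operatorname{Out}(\overline{\Gamma})$. After reduction mod $p^a$, $\psi$ lands in the finite quotient $N_{\GL_2(\Z/p^a\Z)}(\Gamma_{p^a})/\Gamma_{p^a}$. Using $-I \in \Gamma_H$ together with the paper's identification $\Aut(X_{\Gamma_H}, \pi_{\Gamma_H}) = N_{\SL_2(\Z/M\Z)/\{\pm I\}}(\Gamma_M/\{\pm I\})/(\Gamma_M/\{\pm I\})$, this target sits in an exact sequence $1 \to A_p \to N_{\GL_2(\Z/p^a\Z)}(\Gamma_{p^a})/\Gamma_{p^a} \to D \to 1$ with $D$ a subgroup of $(\Z/p^a\Z)^\times$, so $\psi$ takes values in a group whose order is explicitly bounded in terms of $p^a$ and $|A_p|$. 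The key structural claim is that the $\GL_2$ level $p^b$ of $H$ is determined (up to the factor $p^a$) by the conductor of $\psi$: in one direction, matrices $\equiv I \pmod{p^{b-1}}$ reduce to the identity mod $p^a$ (once $b-1 \geq a$) and hence act trivially by conjugation on $\Gamma_{p^a}$, so $\psi$ must be trivial on $1 + p^{b-1}\Z_p$ whenever $H$ contains the principal $\GL_2$ congruence subgroup of level $p^{b-1}$; the converse is shown by combining $\det H = \Z_p^\times$ with the fact that the centralizer of $\overline{\Gamma}$ in $\GL_2(\Z_p)$ reduces to the central scalars $\{\lambda I\}$.

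Finally, any continuous homomorphism from $\Z_p^\times$ to a finite group $G$ factors through $(\Z/p^c\Z)^\times$ with $p^{c-1}$ dividing the $p$-power part of $|G|$, because $1 + p\Z_p$ is pro-$p$ and its image is a cyclic $p$-group of order at most the $p$-part of $|G|$. Combining this with the previous paragraph, $p^c$ and hence $p^b \leq p^a \cdot p^c$ are explicitly bounded in terms of $p^a$ and $|A_p|$, proving the proposition. The main obstacle will be the precise conductor-to-level translation above: establishing the ``if and only if'' between the conductor of $\psi$ and the $\GL_2$ level of $H$ requires careful use of the centralizer computation and the determinant surjectivity (reconciling the target $N_{\GL_2}/\Gamma$ of conjugation with $A_p = N_{\SL_2/\{\pm I\}}/(\Gamma/\{\pm I\})$ and handling the $\pm I$ ambiguity), together with verifying that the ``same prime factors'' hypothesis is exactly what prevents conductor contributions at primes $\ell \neq p$ from introducing new prime factors into $N$.
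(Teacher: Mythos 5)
Your approach is genuinely different from the paper's. The paper realizes $X_H$ as a twist of $X_{\Gamma_H}$ compatibly with the maps to the $j$-line (via the isomorphism over $K_N$), obtains from the twisting data a homomorphism $\Gal(K_{N^{\infty}}/K_M)\to\Aut(X_{\Gamma_H},\pi_{\Gamma_H})$, and uses procyclicity of the source to conclude $N\mid Mb$ (or $2Mb$), where $b$ is the least common multiple of the orders of automorphisms of $M$-power order; you instead work entirely inside $\GL_2$, bounding the conductor of the conjugation action of the determinant quotient on $\Gamma_H$. The two are dual in spirit, and your route avoids the moduli-theoretic input, but as written it has two genuine gaps. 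First, the opening localization is unjustified: for composite level an open $H\le\GL_2(\Zhat)$ need not be the product of its projections $H_p$, and neither $\Gamma_M$ nor $\Aut(X_{\Gamma_H},\pi_{\Gamma_H})$ need factor over $p\mid M$ (Goursat-type gluings). The proposition as stated allows composite $N$, so you must either run the argument without splitting into a product (the paper's proof does, since $\Gal(K_{M^{\infty}}/K_M)$ is procyclic even for composite $M$) or note that you only treat prime-power level, where the decomposition is trivial because $H\supseteq\prod_{\ell\neq p}\GL_2(\Z_\ell)$, and which is all the paper actually uses.

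Second, the direction of your ``key structural claim'' that the proof really needs -- small conductor forces small level -- is exactly the part you leave as the main obstacle, and it is the mathematical heart. It can be completed, but only with the target of $\psi$ taken modulo scalars as well as modulo $\Gamma_H$: if $\psi(u)$ is trivial and $h\in H$ has $\det h=u\in 1+p^c\Z_p$, then $h=\gamma\cdot\lambda I$ with $\gamma\in\Gamma_H$ and $\lambda I$ in the centralizer, hence $\lambda I=\gamma^{-1}h\in H$ and $\lambda^2=u$; full determinant together with $-I\in H$ then places every scalar in $1+p^{c}\Z_p$ inside $H$ (for $p$ odd), and combining these scalars with $\ker(\SL_2(\Z_p)\to\SL_2(\Z/p^a\Z))\subseteq\Gamma_H$ yields $H\supseteq\{g\equiv I\bmod p^{\max(a,c)}\}$. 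You also need the bridge between this statement about the action on the full profinite $\Gamma_H$ and the finite mod-$p^a$ target used for the conductor bound; note that with the target $N_{\GL_2(\Z/p^a\Z)}(\Gamma_{p^a})/\Gamma_{p^a}$ alone the determinant forces the conductor to be at least $p^a$ and triviality does not directly produce scalars in $H$, so you must quotient by the central scalars and use that $\{g\equiv I\bmod p^a\}$ in $\GL_2(\Z_p)$ equals the product of the scalars congruent to $1$ with $\ker(\SL_2(\Z_p)\to\SL_2(\Z/p^a\Z))$ for odd $p$, with an extra power of $2$ at $p=2$ (where squaring $1+2^k\Z_2\to 1+2^{k+1}\Z_2$ also costs a factor of $2$, matching the paper's $2M$ adjustment). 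With those points supplied, your bound is explicit in $M$ and the order of $\Aut(X_{\Gamma_H},\pi_{\Gamma_H})$, so the strategy is viable, though noticeably longer than the paper's twisting argument.
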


\begin{proof}
    Let $M$ be the $\SL_2$ level of $\Gamma_H.$ Then, $N$ is the smallest positive multiple of $M$ for which we can find an isomorphism $f \colon (X_{\Gamma_H})_{K_N} \to (X_H)_{K_N}$ that satisfies $\pi_H \circ f = \pi_{\Gamma_H}.$ For $\sigma \in \Gal(K_{N^{\infty}}/K_M)$, $f^{-1}\sigma f$ is a map from $(X_{\Gamma})_{K_{N^{\infty}}} \to (X_{\Gamma})_{K_{N^{\infty}}}$ that satisfies $\pi_{\Gamma_H} \circ (f^{-1}\sigma f)=\pi_{\Gamma_H}.$ Therefore, we can define a homomorphism $\phi \colon \Gal(K_{N^{\infty}}/K_M) \to \Aut(X_{\Gamma_H},\pi_{\Gamma_H})$ that sends $\sigma$ to $f^{-1}\sigma f.$ 

    Let $b$ be the least common multiple of orders of elements $a \in \Aut(X_{\Gamma_H},\pi_{\Gamma_H})$ that divide some power of $M$. Define $G:=\Gal(K_{N^{\infty}}/K_M)$ if $M$ is not congruent to $2$ modulo $4$ and $G:=\Gal(K_{N^{\infty}}/K_{2M})$ if $M$ is congruent to $2$ modulo $4.$ The Galois group $G$ is cyclic and its cardinality has the same prime factors as $M$. Let $g$ be a generator of $G.$ Let $a:=\phi(g).$ Then by our choice of $b$ we get that $a^b=1.$ Since $\phi$ is a homomorphism, $\phi(g^b)=1.$ It follows that $\phi$ factors through $\Gal(K_{Mb}/K_M)$ if $M$ is not congruent to $2$ modulo $4$ and $\phi$ factors through $\Gal(K_{2Mb}/K_M)$ if $M$ is congruent to $2$ modulo $4.$ This gives us an upper bound on $N.$ 
\end{proof}

\subsection{Jacobian Decomposition}

Throughout our computations, we make use of the fact that the Jacobian of modular curve has a decomposition corresponding to certain modular forms. Let $H$ be an open subgroup of $\GL_2(\Zhat)$, and let the Jacobian of the corresponding modular curve $X_H$ be given by $J_H:=\operatorname{Jac}(X_H)$. This is an abelian variety that has good reduction at all primes $p$ that do not divide the level $N$ of $H$. As explained in Theorem A.7 of \cite{rszb}, the simple factors of $J_H$ are $\Q$-isogenous to the modular abelian variety associated to the Galois orbit of a weight $2$ eigenform for $\Gamma_1(N)\cap\Gamma_0(N^2).$ For most of the modular curves we are interested in, the Jacobian decompositions are given on the LMFDB \cite{lmfdb}. These decompositions are useful because they tell us which elliptic curves a modular curve could possibly map to. In particular, we reduce the size of our candidate list of positive rank bielliptic curves by ruling out any non-hyperelliptic modular curves that do not have a rank one elliptic curve in their Jacobian decompositions. In some cases, we compare $L$-functions of genus one quotients by an involution of a modular curve to any rank one elliptic curve factors of the Jacobian. This sometimes allows us to conclude that a given modular curve cannot be positive rank bielliptic. We repeatedly make use of the following fact. 

\begin{lemma} \label{lemma:jacfactor}
    Let $C$ be a nice curve over $k$ and suppose that $f: C\to E$ is a morphism of degree $d$ to an elliptic curve $E$. Then $E$ appears (up to isogeny) in the isogeny decomposition of the Jacobian of $C$.
\end{lemma}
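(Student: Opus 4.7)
The plan is to exhibit $E$ as an isogeny factor of $J(C) := \operatorname{Jac}(C)$ by using the functoriality of the Jacobian together with Poincar\'e's reducibility theorem.

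First, I would note that since $f \colon C \to E$ has positive degree $d$, it is in particular non-constant and dominant. Picard functoriality then gives a pullback homomorphism $f^{\ast} \colon E \simeq \operatorname{Pic}^{0}(E) \to \operatorname{Pic}^{0}(C) = J(C)$, while Albanese functoriality (together with the canonical identification $E \simeq J(E)$) gives a pushforward $f_{\ast} \colon J(C) \to E$. The key input is the standard identity
\[
f_{\ast} \circ f^{\ast} = [d] \colon E \to E,
\]
where $[d]$ denotes multiplication by $d = \deg f$. Since $d \neq 0$, the map $[d]$ is an isogeny of $E$, so in particular $f^{\ast}$ has finite kernel. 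Thus $f^{\ast}(E) \subseteq J(C)$ is an abelian subvariety isogenous to $E$.

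Second, I would invoke Poincar\'e's reducibility theorem: over any field, for every abelian subvariety $B \subseteq A$ there exists a complementary abelian subvariety $B' \subseteq A$ such that $B \times B' \to A$ is an isogeny. Applying this with $A = J(C)$ and $B = f^{\ast}(E)$ yields an isogeny
\[
J(C) \sim E \times B'
\]
(using that $f^{\ast}(E) \sim E$). Decomposing $B'$ further into simple factors, $E$ (or a simple factor isogenous to it, in the CM case) appears in the isogeny decomposition of $J(C)$, as claimed.

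The argument is routine; the only mild subtlety is the identity $f_{\ast} f^{\ast} = [\deg f]$, which is a classical fact about the induced maps between Jacobians (it can be verified on divisor classes by computing the pullback and pushforward of a degree-zero divisor on $E$). No step here is a serious obstacle, which is why the lemma is used without detailed justification in the main text.
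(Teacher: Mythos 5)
Your proposal is correct and follows essentially the same route as the paper: the identity $f_*\circ f^*=[d]$ gives finiteness of $\ker(f^*)$, so the image of $f^*$ is an abelian subvariety of $J(C)$ isogenous to $E$, and Poincar\'e (complete) reducibility then yields $J(C)\sim E\times B$. No substantive differences to note.
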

\begin{proof}
Suppose $f:C\to E$ is a such a map. Identify $E\cong J(E)\cong\operatorname{Pic}^0(E)$ via $x\mapsto [x-O_E].$ The finite map $f$ induces homomorphisms of abelian varieties $f^*:J(E)\to J(C)$ and $f_*:J(C)\to J(E)$ given by the pullback and pushforward of degree $0$ divisors, respectively. These homomorphisms satisfy $f_*\circ f^*=[d]_{J(E)}$, where the right hand side denotes the isogeny of multiplication by $d$. We have that $\ker(f^*)\subseteq\ker[d]_{J(E)}=E[d]$, which is finite. This gives us an isogeny of elliptic curves $E/\ker(f^*)\to\im(f^*)$, and composing with the quotient $E\to E/\ker(f^*)$ shows that the abelian subvariety $\im(f^*)$ of $J(C)$ is isogenous to $E$. By Poincaré complete irreducibility (Cor 3.20 of \cite{conrad}), there exists an abelian variety $B$ with $J(C)\sim\im(f^*)\times B\sim E\times B$.
\end{proof}

We repeatedly use the following fact, particularly when the codomain is an elliptic curve. 
\begin{lemma}\label{lemma:involutionrmk}
     Suppose $X$ and $C$ are nice curves over field $k$ of characteristic $0$ and $\varphi:X\to C$ is a degree two map over $k.$ Then $X/\langle\iota\rangle\cong C$ for some involution $\iota$ of $X.$
    \begin{proof}
        Note that $\varphi$ induces a degree two Galois extension of function fields $k(X)/k(C)$, and by general Galois theory for curves, the nontrivial element $\iota^*$ of $\Gal(k(X)/k(C))$ comes from a unique involution $\iota:X\to X$ such that on functions, $\iota^*(f)=f\circ\iota$. It then follows that $k(X/\langle\iota\rangle)=k(X)^{\langle\iota^*\rangle}=k(C)$, where the middle term is the fixed field of $\langle\iota\rangle$. Since a smooth projective curve is determined uniquely (up to unique isomorphism) by its function field, we have $X/\langle\iota\rangle\cong C$.  
    \end{proof}
\end{lemma}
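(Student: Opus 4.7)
The plan is to pass to function fields and exploit the anti-equivalence between nice curves over $k$ and finitely generated field extensions of $k$ of transcendence degree one. Under this correspondence, the degree two morphism $\varphi \colon X \to C$ pulls back to a degree two inclusion $k(C) \hookrightarrow k(X)$. Since $\operatorname{char} k = 0$, the extension is separable, and any separable quadratic extension is automatically Galois, so $G := \Gal(k(X)/k(C))$ has order two.

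Next, I would transport the unique nontrivial element $\iota^* \in G$ back to geometry: under the anti-equivalence it corresponds to a unique automorphism $\iota \colon X \to X$ over $k$, which must have order two since $\iota^*$ does. This is the desired involution. The quotient $X/\langle \iota \rangle$ then exists as a nice curve over $k$ whose function field is the fixed subfield $k(X)^{\iota^*}$, and by the Galois correspondence for $k(X)/k(C)$ this fixed subfield is precisely $k(C)$.

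Since a nice curve over $k$ is determined up to unique isomorphism by its function field, the equality of function fields forces $X/\langle \iota \rangle \cong C$. There is no real obstacle here; the proof is essentially a translation through the function-field dictionary. The only step deserving a moment of attention is confirming that a separable quadratic extension is automatically Galois, which is immediate since the minimal polynomial of a primitive element has only one other root, necessarily lying in the extension.
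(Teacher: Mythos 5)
Your proposal is correct and follows essentially the same route as the paper's proof: pass to the degree two function field extension, identify the nontrivial Galois element with an involution of $X$, and conclude from the equality of function fields that $X/\langle\iota\rangle\cong C$. The only addition is your explicit remark that a separable quadratic extension is automatically Galois, which the paper leaves implicit.
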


We use the following lemma to deal with some special cases.

\begin{lemma}\label{lemma:nfactors}
    Let $C$ be a nice curve over $k$ of genus $5$. Suppose an elliptic curve $E$ appears $n$ number of times in the Jacobian decomposition of $C$, where $n$ is either $1$ or $2.$ Further suppose that there exist $n$ degree two maps $d_1: C \to C_1$, \ldots, $d_n: C \to C_n$ where $C_i$ is a pointless genus 1 curve satisfying $J(C_i)=E$ for $i \in \{1,\ldots,n\}.$ Then, there does not exist a degree $2$ map from $C$ to $E.$
\end{lemma}

\begin{proof}
    We will assume that $n=1$. Suppose that there exists a degree $2$ map $f: C \to E.$ By lemma \ref{lemma:involutionrmk} we know that there exist two involutions $\sigma_1$ and $\sigma_2$ that correspond to $C_1$ and $E$, respectively. Call the group generated by $\sigma_1$ and $\sigma_2$ G. From lemma 2.3 \cite{MR2853619} we know that G is a Klein four group and $C/G$ has genus $0$. In Theorem 2 \cite{MR2403651} if we take $H_1=\langle\sigma_1\rangle$, $H_2=\langle\sigma_2\rangle$ and $H_3=\langle\sigma_1\sigma_2\rangle$ we get a contradiction from number of times $E$ appears in the Jacobian decomposition of $C$. 

    We now assume that $n=2$. Suppose that there exists a degree $2$ map $f: C \to E.$ Again by lemma \ref{lemma:involutionrmk} we know that there exist three involutions $\sigma_1$, $\sigma_2$ and $\sigma_3$ that correspond to $C_1$, $C_2$ and $E$, respectively. Call the group generated by $\sigma_i$ G, where $i \in \{1,2,3\}$. From lemma 2.3 \cite{MR2853619} we know that G is an elementary abelian group of order 8 and $C/G$ has genus $0$. In Theorem 2 \cite{MR2403651} if we take $H_1=\langle\sigma_1 \rangle$, $H_2=\langle\sigma_2\rangle$ and $H_3=\langle\sigma_1\sigma_2\rangle$ we can conclude that $C/\langle\sigma_1\sigma_2\rangle$ has no $E$ in its Jacobian decomposition. Similarly, $C/\langle\sigma_i\sigma_j\rangle$ has no $E$ in its Jacobian decomposition for $i \ne j$ and $i,j \in \{1,2,3\}.$ Now in Theorem 2 \cite{MR2403651} if we take $H_1=\langle\sigma_1\rangle$, $H_2=\langle\sigma_2\rangle$, $H_3=\langle\sigma_3\rangle$, $H_4=\langle\sigma_1\sigma_2\rangle$, $H_5=\langle\sigma_2\sigma_3\rangle$, $H_6=\langle\sigma_1\sigma_3\rangle$ and $H_7=\langle\sigma_1\sigma_2\sigma_3\rangle$, then we get a contradiction from number of times $E$ appears in the Jacobian decomposition on both sides. 
\end{proof}

\subsection{Gonality}
We collect some facts about curve gonality that will be useful to us in the next section when we reduce to a finite list of candidate hyperelliptic and positive rank bielliptic modular curves. 

\begin{definition} Let $C$ be a nice curve defined over a field $k$. The \textit{$k$-gonality} of $C$ is defined as the minimum degree of a morphism $\phi \colon C \to \P^1_{\Q}$, where $\phi$ is defined over $k.$
\end{definition}

\begin{lemma}\label{lemma:gonalitybasechange}
    Let $C$ be a nice curve defined over a field $k$. If $k$ is a subfield of $K$, then $K$-gonality of $C$ is less than or equal to $k$-gonality of $C.$
\end{lemma}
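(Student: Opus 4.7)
The plan is to take a morphism $\phi : C \to \mathbb{P}^1_k$ of minimal degree defined over $k$ and base change it to $K$, producing a morphism of the same degree defined over $K$. This should give an immediate upper bound on the $K$-gonality in terms of the $k$-gonality.

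More concretely, let $d$ denote the $k$-gonality of $C$, so there exists a morphism $\phi \colon C \to \mathbb{P}^1_k$ defined over $k$ with $\deg(\phi) = d$. First I would note that $\mathbb{P}^1_k \times_k \operatorname{Spec} K \cong \mathbb{P}^1_K$, so the base change $\phi_K \colon C_K \to \mathbb{P}^1_K$ is a morphism over $K$. Next I would invoke the fact that degree is preserved under flat base change (the extension $k \hookrightarrow K$ is automatically flat), so $\deg(\phi_K) = \deg(\phi) = d$.

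Finally, since $\phi_K$ is a morphism defined over $K$ of degree $d$, the $K$-gonality of $C$ is at most $d$, which equals the $k$-gonality of $C$. This completes the argument.

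There is no serious obstacle here; the only point to verify is that base change preserves degree, which is standard. One mild subtlety worth mentioning: we do not claim equality of gonalities, because in general the $K$-gonality may be strictly smaller than the $k$-gonality (for instance, a curve could acquire a low-degree map to $\mathbb{P}^1$ only after a base extension that produces new rational points or new linear systems defined over $K$ but not over $k$).
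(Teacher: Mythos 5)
Your proposal is correct and is essentially the paper's argument: the paper simply observes that any degree-$d$ morphism $C \to \P^1$ defined over $k$ is also defined over $K$ (i.e.\ base changes to a degree-$d$ morphism over $K$), which immediately bounds the $K$-gonality by the $k$-gonality. Your added remarks about flatness and the possibility of strict inequality are fine but not needed.
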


\begin{proof}
    This immediately follows from our definition of gonality and the observation that any morphism defined over $k$ is also defined over $K.$
\end{proof}

\begin{definition}
    Let $k$ be a field of characteristic $0$. Let $C$ be a nice curve over $k$. We say that $C$ is \textit{hyperelliptic} if there exists a degree $2$ map over $k$ from $C$ to $\P^1_k.$
\end{definition}

\begin{lemma}\label{lemma:hyperellipticgonality2}
    Let $k$ be a field of characteristic $0$. Let $C$ be a nice curve over $k$ of genus greater than or equal to $1.$ Then, $C$ is hyperelliptic if and only if its $k$-gonality is $2.$
\end{lemma}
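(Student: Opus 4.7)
The plan is straightforward, as both implications essentially unravel the definitions; the only substantive point is ruling out the possibility of $k$-gonality strictly less than $2$.

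For the forward direction, suppose $C$ is hyperelliptic, so there exists a degree $2$ morphism $\phi \colon C \to \P^1_k$ defined over $k$. By the definition of $k$-gonality, this immediately yields that the $k$-gonality of $C$ is at most $2$. To conclude that the $k$-gonality is exactly $2$, I would rule out the case of $k$-gonality equal to $1$: a degree $1$ morphism between nice curves is birational, and since both source and target are smooth projective, it extends to an isomorphism. But an isomorphism $C \xrightarrow{\sim} \P^1_k$ would force $g(C) = 0$, contradicting the hypothesis that $g(C) \geq 1$. Hence the $k$-gonality is exactly $2$. (There is no issue with gonality being $0$ since we only consider non-constant morphisms in the definition.)

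For the reverse direction, if the $k$-gonality of $C$ equals $2$, then by definition there exists a morphism $\phi \colon C \to \P^1_k$ of degree $2$ defined over $k$, which is precisely the condition for $C$ to be hyperelliptic.

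The only step requiring any genuine input is the observation that a curve of genus $\geq 1$ admits no degree $1$ map to $\P^1$, which is standard. No step is expected to be an obstacle; the lemma is essentially a sanity check relating the definition of hyperellipticity given here to the gonality terminology used in the ensuing enumeration arguments.
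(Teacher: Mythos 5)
Your proposal is correct and follows essentially the same argument as the paper: both directions unwind the definitions, with the only real content being that a degree $1$ map to $\P^1_k$ would force $C$ to have genus $0$, contradicting $g(C)\geq 1$. Your added remark that a degree $1$ morphism of nice curves is an isomorphism just makes explicit the step the paper leaves implicit.
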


\begin{proof}
    Assume that $C$ is hyperelliptic. Then there exists a degree $2$ map over $k$ from $C$ to $\P^1_k.$ Further, there cannot be a degree $1$ map from $C$ to $\P^1_k$ because that would imply that genus of $C$ is $0$. Therefore, its $k$-gonality is $2.$

    If the $k$-gonality of $C$ is $2$, then there exists a degree $2$ map over $k$ from $C$ to $\P^1_k.$ By definition, $C$ is hyperelliptic.
\end{proof}

\begin{definition}
    Let $k$ be a field of characteristic $0$. Let $C$ be a nice curve over $k$ of genus greater than or equal to $2.$ We say that $C$ is \textit{positive rank bielliptic} if there exists a degree two map over $k$ from $C$ to a positive rank elliptic curve $E$ over $k$.
\end{definition}

\begin{definition}
    Let $k$ be a field of characteristic $0$. Let $C$ be a nice curve over $k$. We say that $P \in C(\kbar)$ is a \textit{quadratic point} of $C$ if the degree of $k(P)$ over $k$ is at most two.
\end{definition}

\begin{theorem}\label{them:infquadpoints}
    Let $k$ be a field of characteristic $0$. Let $C$ be a nice curve over $k$ of genus greater than or equal to $2$. There exist infinitely many quadratic points of $C$ if and only if $C$ is hyperelliptic or positive rank bielliptic.
\end{theorem}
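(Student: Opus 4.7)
The plan is to split the equivalence into its two halves, with the nontrivial direction reducing entirely to a classical theorem already cited in the introduction.

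For the ``if'' direction, suppose $C$ admits a degree-$2$ morphism $\phi\colon C\to X$ defined over $k$, where $X$ is either $\P^1_k$ (the hyperelliptic case) or a positive rank elliptic curve $E/k$ (the positive rank bielliptic case). In either situation $X(k)$ is infinite: for $\P^1_k$ because $\mathrm{char}(k)=0$ forces $k\supseteq \Q$, and for $E$ by the positive rank hypothesis. For every $P\in X(k)$ the fiber $\phi^{-1}(P)$ consists of at most two points whose residue fields have degree at most $2$ over $k$. Since $\phi$ is finite of degree $2$, the infinitely many points of $X(k)$ pull back to infinitely many distinct quadratic points on $C$.

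For the ``only if'' direction, I would simply cite Corollary $3$ of \cite{silvermanharris}, exactly as flagged in the introduction. The content of that theorem, due to Harris and Silverman, goes through the symmetric square $C^{(2)}$, whose $k$-points parametrize effective divisors of degree at most $2$ on $C$ and therefore account for every quadratic point of $C$. One then applies Faltings's theorem on rational points of subvarieties of abelian varieties to the image of $C^{(2)}$ in $\mathrm{Jac}(C)$: if $C^{(2)}(k)$ is infinite, some positive-dimensional subvariety must contribute, and the only two possibilities are a rational curve inside $C^{(2)}$ (giving a $g^1_2$, hence a degree-$2$ map $C\to\P^1_k$) or a translate of a positive rank elliptic subvariety of $\mathrm{Jac}(C)$ sitting inside $C^{(2)}$ (giving a degree-$2$ map $C\to E$ with $\mathrm{rank}\,E(k)\geq 1$).

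The main obstacle is entirely absorbed into the Harris--Silverman theorem; the Faltings-theoretic analysis just sketched is where all the work lives, and I would not redo it. The one subtlety worth flagging during write-up is that Harris--Silverman is classically stated over number fields, whereas the statement here allows an arbitrary characteristic-zero field $k$. For the applications of this paper (where $k=\Q$ throughout) this is moot, but a fully general proof would need either to restrict the statement to number fields or to insert a brief spreading-out/specialization argument reducing the general characteristic-zero case to a finitely generated, hence number field, situation.
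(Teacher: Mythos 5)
Your proposal matches the paper's proof essentially verbatim: the ``if'' direction is the easy observation that pulling back infinitely many $k$-points along a degree-two map yields infinitely many quadratic points, and the ``only if'' direction is an appeal to Corollary~3 of \cite{silvermanharris}, which is exactly what the paper does. Your remark that Harris--Silverman is stated over number fields while the theorem here allows any characteristic-zero field is a fair caveat that the paper glosses over, but it does not change the fact that the two arguments are the same.
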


\begin{proof}
    If $C$ is hyperelliptic or positive rank bielliptic, then it is easy to observe that there are infinitely many quadratic points of $C$. For converse, please see corollary 3 \cite{silvermanharris}. 
\end{proof}

We now state the Castelnuovo–Severi inequality [Theorem 3.11.3 \cite{MR2464941}] and following it we give some corollaries that we will use later to decide if a curve is hyperelliptic/positive rank bielliptic or not.

\begin{theorem}\label{thm:CSineq}
    Let $k$ be a field of characteristic $0.$ Let $\phi_1 \colon C \to C_1$ and $\phi_2 \colon C \to C_2$ be nonconstant morphisms, respectively, where $C_1$ and $C_2$ are nice curves over $k.$ Assume there is no morphism $\phi \colon C \to C'$ of degree greater than one through which both $\phi_1$ and $\phi_2$ factor. Then 
    \[g \le d_1g_1 +d_2g_2 +(d_1 -1)(d_2 -1),\] where $g_i$ is the genus of $C_i$ and $d_i$ is the degree of $\phi_i$ for $i \in \{1,2\}.$
\end{theorem}

\begin{Corollary}\label{cor:unique genus 0}
    Let $C$ be a nice curve over a field $k$ of char $0$ of genus $g$ bigger than or equal to two. If there exists a degree 2 map $\phi \colon C \to C'$ where $C'$ is a pointless conic over $k$, then there cannot exist a degree $2$ map $\psi \colon C \to \P^1_k.$ 
\end{Corollary}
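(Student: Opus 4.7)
The plan is to prove the corollary by contradiction using the Castelnuovo--Severi inequality (\Cref{thm:CSineq}). Suppose, towards a contradiction, that both a degree two map $\phi \colon C \to C'$ to a pointless conic and a degree two map $\psi \colon C \to \P^1_k$ exist.

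To invoke \Cref{thm:CSineq} with $\phi_1 = \phi$, $\phi_2 = \psi$, $d_1 = d_2 = 2$, and $g_1 = g_2 = 0$ (smooth conics have genus $0$), I first need to verify its hypothesis: that $\phi$ and $\psi$ do not both factor through some common nonconstant morphism $\pi \colon C \to C''$ of degree strictly greater than one. If they did, multiplicativity of degree in a factorization $\phi = \phi' \circ \pi$ with $\deg \phi = 2$ forces $\deg \pi = 2$ and $\deg \phi' = 1$, so $\phi' \colon C'' \to C'$ is an isomorphism; by the same argument on $\psi$, also $C'' \cong \P^1_k$. Composing gives an isomorphism $C' \cong \P^1_k$ over $k$, which contradicts the pointlessness of $C'$ since $\P^1_k(k) \neq \emptyset$.

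Having verified the hypothesis, \Cref{thm:CSineq} yields
\[
g \;\le\; 2 \cdot 0 + 2 \cdot 0 + (2-1)(2-1) \;=\; 1,
\]
contradicting the assumption $g \ge 2$. Hence no such $\psi$ can exist, completing the proof.

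There is no substantive obstacle here; the only thing requiring a moment of care is the ``no common factor'' hypothesis of Castelnuovo--Severi, which is handled precisely by the pointlessness assumption on $C'$: it is exactly this hypothesis that rules out the otherwise possible scenario in which $\phi$ and $\psi$ differ by an isomorphism $\P^1_k \cong C'$.
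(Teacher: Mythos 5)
Your proposal is correct and follows exactly the paper's argument: rule out a common nontrivial factorization by noting it would force $C' \cong \P^1_k$, contradicting pointlessness, and then apply the Castelnuovo--Severi inequality with $d_1=d_2=2$, $g_1=g_2=0$ to get $g \le 1$, contradicting $g \ge 2$. The only difference is that you spell out the degree bookkeeping in the factorization step, which the paper leaves implicit.
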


\begin{proof}
    Suppose there exists a degree $2$ map $\psi \colon C \to \P^1_k.$ Then there is no morphism $\eta \colon C \to C''$ of degree bigger than one through which both $\phi$ and $\psi$ factor because that would imply $C'$ is isomorphic to $\P^1_k$ which would be a contradiction. From Theorem \ref{thm:CSineq} we know that $g \le 1$ which is a contradiction because of our assumption on genus of $C$.
\end{proof}




\begin{lemma}
   Let $X$ be a nice curve defined over $k.$ Let $p$ be a prime of good reduction of $X$. Let $X_p$ denote the reduction of $X$ at $p.$ If $X_p$ has no genus $1$ quotient by involution, then $X$ is not bielliptic. 
\end{lemma}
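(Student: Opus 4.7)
The plan is to argue the contrapositive: if $X$ is bielliptic then $X_p$ admits an involution whose quotient has genus $1$. Starting from a degree-two map $X \to E$ to an elliptic curve over $k$, Lemma \ref{lemma:involutionrmk} gives an involution $\iota$ of $X$ over $k$ with $X/\langle\iota\rangle \cong E$, and biellipticity forces $g(X) \geq 2$.

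The next step is to spread $\iota$ out to the smooth proper model $\mathcal{X}$ of $X$ over the local ring $\mathcal{O}_{k,p}$, which exists by the good reduction hypothesis. Rigidity of automorphisms for smooth proper curves of genus $\geq 2$ (the automorphism group scheme $\underline{\Aut}(\mathcal{X}/\mathcal{O}_{k,p})$ is unramified) lets $\iota$ extend uniquely to an involution $\tilde\iota$ of $\mathcal{X}$, whose reduction $\iota_p$ is a nontrivial involution of $X_p$. I would then form the quotient $\mathcal{Y} \coloneqq \mathcal{X}/\langle\tilde\iota\rangle$: its generic fiber is $X/\langle\iota\rangle \cong E$, of genus $1$, and its special fiber coincides with $X_p/\langle\iota_p\rangle$ since the function-field description of the involution quotient from Lemma \ref{lemma:involutionrmk} is compatible with base change. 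Flatness of $\mathcal{Y}$ over $\mathcal{O}_{k,p}$---inherited via the finite surjective degree-two map $\mathcal{X} \to \mathcal{Y}$---then forces $g(X_p/\langle\iota_p\rangle) = g(E) = 1$ by constancy of arithmetic genus in a proper flat family over a DVR.

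The main obstacle is the case when the residue characteristic at $p$ equals $2$, since then $\mathcal{X} \to \mathcal{Y}$ may be wildly ramified and $\mathcal{Y}_p$ is not a priori smooth or reduced. A safe alternative in that situation is a Riemann--Hurwitz count: the fixed scheme of $\tilde\iota$ on $\mathcal{X}$ is finite flat over $\mathcal{O}_{k,p}$, so $\iota_p$ has the same number $2g(X) - 2$ of fixed points as $\iota$, and Riemann--Hurwitz applied to $X_p \to X_p/\langle\iota_p\rangle$ directly yields $g(X_p/\langle\iota_p\rangle) = 1$, finishing the contrapositive.
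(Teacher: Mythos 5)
Your main line of argument --- extend $\iota$ to the smooth proper model $\mathcal{X}$ over $\mathcal{O}_{k,p}$, form $\mathcal{Y}=\mathcal{X}/\langle\tilde\iota\rangle$, and compare fibres via flatness and constancy of arithmetic genus --- is correct when the residue characteristic of $p$ is odd (so that taking $\Z/2\Z$-invariants commutes with reduction), and it is essentially the paper's own proof made precise: the paper simply reduces the quotient map $\phi\colon X\to E$ at $p$ and asserts that $E_p$ is an involution quotient of $X_p$. (Two small points: the extension of $\iota$ to $\mathcal{X}$ really uses finiteness/properness of the automorphism scheme via the valuative criterion, with unramifiedness serving to guarantee $\iota_p\neq\operatorname{id}$; and flatness of $\mathcal{Y}$ is most easily seen from the fact that $\mathcal{Y}$ is integral and dominates the DVR, hence torsion-free.) Since every application of the lemma in the paper is at an odd prime, this part suffices for the paper's purposes.

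The genuine gap is your residue-characteristic-$2$ patch. Even granting that the fixed scheme of $\tilde\iota$ is finite flat over $\mathcal{O}_{k,p}$ (which itself needs justification), flatness preserves only the \emph{length} of its special fibre, not the number of geometric fixed points: in residue characteristic $2$ the fixed points collide, the special fibre of the fixed scheme is non-reduced, and $\iota_p$ generally has strictly fewer than $2g-2$ fixed points. Moreover the cover $X_p\to X_p/\langle\iota_p\rangle$ is then wildly ramified, so Riemann--Hurwitz reads $2g-2=2(2g'-2)+\deg\mathfrak{d}$ with each fixed point contributing a different-exponent at least $2$; the number of fixed points alone does not determine $g'$, and if $\iota_p$ really had $2g-2$ fixed points this formula would force $g'=0$ for $g\geq 2$, not $1$, so your two claims are mutually inconsistent (conversely, when $g'=1$ the formula shows $\iota_p$ has at most $g-1$ fixed points). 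If you want the lemma at primes above $2$, a correct substitute is to pass to Jacobians: the identity $\pi^*\pi_*=1+\iota^*$ shows $J(X_p/\langle\iota_p\rangle)$ is isogenous to the image of $1+\iota_p^*$ on $J(X_p)$, and since $\operatorname{Pic}^0_{\mathcal{X}/\mathcal{O}_{k,p}}$ is an abelian scheme, the rank of $1+\iota^*$ on the $\ell$-adic Tate module (for $\ell$ prime to the residue characteristic) is the same on the generic and special fibres; hence the quotient genus is $1$ in every residue characteristic.
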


\begin{proof}
    If $X$ is bielliptic, then there exists a map $\phi \colon X \to E$ where $E$ is a genus $1$ curve. As discussed in Lemma \ref{lemma:involutionrmk} we know that $\phi$ is the quotient map by some involution of $X$. Reducing this at $p$, we observe that $E_p$ must be a quotient of $X_p$ by involution. The lemma now follows.
\end{proof}

We are interested in determining all the subgroups $H$ of $\GL_2(\Zhat)$ of prime power level that contain $-I$ and satisfy $\det(H)=\Zhat^{\times}$ such that there are infinitely many elliptic curves $E$ over quadratic fields that have the property $\rho_E(\GalK) \subseteq H.$ From Theorem \ref{them:infquadpoints} we know that this is equivalent to determining if $X_H$ is is hyperelliptic or positive rank bielliptic when genus of $X_H$ is greater than or equal to two. We handle genus 0 and genus 1 cases separately.

\section{Enumerating subgroups to check for hyperelliptic and bielliptic modular curves}\label{sec:enumerate}

Let $H$ be an open subgroup of $\GL_2(\Zhat)$ of prime power level that contains $-I$ and has full determinant. Let $X_H$ be the associated modular curve. In this section we will discuss enumeration of all such subgroups for which $X_H$ is possibly either hyperelliptic or positive rank bielliptic. To get a list of all groups we rely on the completeness of LMFDB database of modular curves, which we explain below.

If $H$ is an open subgroup of $\GL_2(\Zhat)$ that contains $-I$ and satisfies $\det(H)=\Zhat^{\times}$ such that the $\GL_2$ level of $H$ has prime power level at most 335 and $X_H$ has genus at most 24, then $H$ is in LMFDB database of modular curves.

\subsection{Genus 0} If $X_H$ has genus $0$, then it has infinitely many quadratic points. So all we need to do is enumerate all such subgroups. The Cummins-Pauli labels of corresponding congruence subgroups are

Genus0CP:=\{\texttt{2A0}, \texttt{2B0}, \texttt{2C0}, \texttt{3A0}, \texttt{3B0}, \texttt{3C0}, \texttt{3D0}, \texttt{4A0}, \texttt{4B0}, \texttt{4C0}, \texttt{4D0}, \texttt{4E0}, \texttt{4F0}, \texttt{4G0}, \texttt{5A0}, \texttt{5B0}, \texttt{5C0}, \texttt{5D0}, \texttt{5E0}, \texttt{5F0}, \texttt{5G0}, \texttt{5H0}, \texttt{7A0}, \texttt{7B0}, \texttt{7C0}, \texttt{7D0}, \texttt{7E0}, \texttt{7F0}, \texttt{7G0}, \texttt{8A0}, \texttt{8B0}, \texttt{8C0}, 
    \texttt{8D0}, \texttt{8E0}, \texttt{8F0}, \texttt{8G0}, \texttt{8H0}, \texttt{8I0}, \texttt{8J0}, \texttt{8K0}, \texttt{8L0}, \texttt{8M0}, \texttt{8N0}, \texttt{8O0}, \texttt{8P0}, \texttt{9A0}, \texttt{9B0}, \texttt{9C0}, \texttt{9D0}, \texttt{9E0}, \texttt{9F0}, \texttt{9G0}, \texttt{9H0}, \texttt{9I0}, \texttt{9J0}, \texttt{11A0},  \texttt{13A0}, 
    \texttt{13B0}, \texttt{13C0}, \texttt{16A0}, \texttt{16B0}, \texttt{16C0}, \texttt{16D0}, \texttt{16E0}, 
    \texttt{16F0}, \texttt{16G0}, \texttt{16H0},  \texttt{25A0}, 
    \texttt{25B0}, \texttt{27A0}, \texttt{32A0}\}.

For each of the congruence groups whose label is in the set Genus0CP we use Proposition \ref{prop:boundingthelevel} to get an upper bound on the level of $H$. If the level of congruence subgroup is $p^n$ for some natural number $n$, then an upper bound on the $\GL_2$ level of $H$ is summarized below.

\begin{itemize}
    \item For $p=2$, an upper bound is $256.$
    \item For $p=3$, an upper bound is $81.$
    \item For $p=5$, an upper bound is $125.$
    \item For $p \in \{7,11,13\}$, the upper bound is $p.$
\end{itemize}

These computations can be verified in the file \href{https://github.com/mjcerchia/two-adic-galois-images-quadratic/blob/main/Genus%200}{\texttt{Genus 0}}.

Since the upper bound is less than 335 all such modular curves are in LMFDB database. It is easy to observe that the maximum $\GL_2$ level of associated group is 32. There are 265 such curves and their LMFDB labels are given in Table \ref{tab:genus 0}.

\subsection{Genus 1} All our computational claims for this subsection can be verified in \href{https://github.com/mjcerchia/two-adic-galois-images-quadratic/blob/main/Genus1}{\texttt{Genus1}}.

The Cummins Pauli labels \cite{MR2016709} of congruence subgroups of prime power level are 

Genus1CP:=\{ \texttt{7A1}, \texttt{7B1}, \texttt{7C1}, \texttt{8A1}, \texttt{8B1}, \texttt{8C1}, \texttt{8D1}, \texttt{8E1}, \texttt{8F1}, 
    \texttt{8G1}, \texttt{8H1}, \texttt{8I1}, \texttt{8J1}, \texttt{8K1}, \texttt{9A1}, \texttt{9B1}, \texttt{9C1}, \texttt{9D1}, \texttt{9E1}, \texttt{9F1}, \texttt{9G1}, \texttt{9H1},  \texttt{11A1}, \texttt{11B1}, \texttt{11C1}, \texttt{11D1}, 
    \texttt{16A1}, \texttt{16B1}, \texttt{16C1}, \texttt{16D1}, \texttt{16E1}, \texttt{16F1}, \texttt{16G1}, \texttt{16H1}, \texttt{16I1}, \texttt{16J1}, \texttt{16K1}, \texttt{16L1}, \texttt{16M1}, 
    \texttt{17A1}, \texttt{17B1}, \texttt{17C1}, \texttt{19A1}, \texttt{19B1}, \texttt{27A1}, \texttt{27B1}, \texttt{27C1}, \texttt{32A1}, \texttt{32B1}, \texttt{32C1}, \texttt{32D1}, \texttt{32E1}, \texttt{49A1}\}.

For each of the congruence groups whose label is in the set Genus1CP we use Proposition \ref{prop:boundingthelevel} to get an upper bound on the level of $H$. If the level of congruence subgroup is $p^n$ for some natural number $n$, then an upper bound on the $\GL_2$ level of $H$ is summarized below.

\begin{itemize}
    \item For $p=2$, an upper bound is $128.$
    \item For $p=3$, an upper bound is $81.$
    \item For $p=7$, an upper bound is $49.$
    \item For $p \in \{11,17,19\}$, the upper bound is $p.$
\end{itemize}

Since the upper bound is less than 335 all such modular curves are in LMFDB database. There are 342 such curves and their LMFDB labels are given in \href{https://github.com/mjcerchia/two-adic-galois-images-quadratic/blob/main/Genus1}{\texttt{Genus1}}. In genus 1 case, the maximum $\GL_2$ level of associated group is 49. Out of these 342 curves there are 335 curves that are elliptic curves or admit a degree 2 map to $\P^1_{\Q}.$ This information is also available on LMFDB. The labels of remaining seven curves are \{\mc{8.48.1.bi.1}, \mc{9.81.1.a.1}, \mc{16.96.1.t.1}, \mc{16.48.1.l.1}, \mc{16.64.1.a.1}, \mc{16.64.1.b.1}, \mc{16.96.1.k.1}\}. We discuss them below.

\subsubsection{The remaining seven genus one cases}\label{remaininggenusone}

Let $K$ be a perfect field, let $E/K$ be an elliptic curve, and let $C/K$ be a homogeneous space for $E/K$. We recall that the \textit{period} of $C/K$ is the exact order of $\{C/K\}$ in the Weil-Chatelet group $\operatorname{WC}(E/K)$, and the \textit{index} of $C/K$ is the smallest degree of an extension $L/K$ such that $C(L)\neq\emptyset$.

To handle the remaining cases, we use the following well known fact. 

\begin{proposition}\label{prop:genus1cohom} Let $C$ be a genus one curve over a perfect field $K$. The the period of $C/K$ divides the index. 

\begin{proof}
    See for instance chapter $X$ of \cite{silverman2009arithmetic}.
\end{proof}
    
\end{proposition}

For computational purposes described below, we also rely on the following result. 

\begin{proposition}
    Let $E/K$ be an elliptic curve. There is a natural bijection \[\operatorname{WC}(E/K)\to H^1(G_K,E).\]
\end{proposition}
\begin{proof}
    The construction of this map and a proof can be found in Theorem X.3.6 of \cite{silverman2009arithmetic}
\end{proof}

For curves with labels in \{\mc{16.96.1.t.1},  \mc{16.64.1.a.1}, \mc{16.64.1.b.1}, \mc{16.96.1.k.1}\} we verify that $[C]$ does not represent an element of $H^1(G_K,E[2])$ by checking that $2[C]$ has no rational points due to local obstructions. So from Proposition \ref{prop:genus1cohom} there are no quadratic points on any of these.

For \mc{9.81.1.a.1} we can see from LMFDB that there is a degree 3 map from it to $\P^1_{\Q}$ so from Proposition \ref{prop:genus1cohom} $[C]$ represents an element of $H^1(G_K,E[3])$ so if it also represents an element of $H^1(G_K,E[2])$, then it should have rational points. We verify that it has no $\Q_3$ points, hence it does not represent an element of $H^1(G_K,E[2])$ so from Proposition \ref{prop:genus1cohom} there are no quadratic points on it.

For \mc{8.48.1.bi.1} and \mc{16.48.1.l.1} we observe in both cases that the corresponding curve class $[C]$ represents an element of $H^1(G_K,E[2])$ but from this we cannot conclude that it has quadratic points. Below, we show that \mc{8.48.1.bi.1} has index four, and hence no quadratic points. As far as we know, this is the only known example of a genus $1$ modular curve where the index is larger than the period. Our argument expands on one used by Cassels \cite{cassels} who found the first example of a genus $1$ curve with an index larger than its period.  

The labels of genus 1 modular curves that have infinitely many quadratic points are given in Table \ref{tab:genus 1}.

\subsubsection{8.48.1.bi.1} \label{periodindex}
\begin{proposition}\label{prop:index4}
The modular curve with LMFDB label 8.48.1.bi.1 has no quadratic points. In particular, it has index greater than 2.
\begin{proof}
    Call this curve $C$. We show that $C$ has no quadratic points by showing that it admits no degree two $\Q$-rational effective divisors. This is sufficient, because if there existed a point $P\in C(K)$ for some degree two extension $K/\Q$, then by letting $\sigma$ be the nontrivial element of $\Gal(K/\Q)$ we would have that $D:=P+\sigma(P)$ is an effective degree $2$ divisor on $C_\Q$ that is fixed by $\Gal(K/\Q)$, hence defined over $\Q$.
    
    From the LMFDB, we observe that $C$ is given by the intersection of two quadrics in $\P^3$: \begin{align*}
    Q_1:2x^2 + 3xy + yz - z^2 + w^2&=0 \\
    Q_2:4x^2 - 2xy + y^2 - 2yz + 2z^2&=0.
\end{align*} Its Jacobian is an elliptic curve given by $E:y^2 = x^3 + 36x^2 - 272x + 448$, which has exactly four rational points, and so there can be at most four degree two rational divisor classes on C. Since the period of $C$ is $2$, $\operatorname{Pic}^2(C)$ is a trivial $E$-torsor, and so we have that $\operatorname{Pic}^2(C)\cong E(\Q)\cong \langle P\rangle=\{O,P,-P,2P\}\cong\Z/4Z$ for an appropriate choice of $P\in E(\Q)$. There are thus four $\Q$-rational divisor classes of degree $2$, which we will call $[D_O],[D_P],[D_{2P}]$, and $[D_{-P}]$. It thus suffices to show that none of these divisor classes contains a rational divisor.



 To that end, we first determine the singular members of the pencil of quadrics containing $C$. The two quadrics defining $C$ have the respective symmetric matrices \[A_1=\begin{pmatrix}
    2&3/2&0&0\\
    3/2 & 0&1/2&0\\
    0&1/2&-1&0\\
    0&0&0&1
\end{pmatrix}\text{ and } A_2=\begin{pmatrix}
    4&-1&0&0\\
    -1&1&-1&0\\
    0&-1&2&0\\
    0&0&0&0
\end{pmatrix},\] which satisfy $Q_i(X)=X^TA_iX$ for $X=(x,y,z,w)$. Now consider the pencil given by $M(u,v)=uA_1+vA_2$ where $[u:v]\in\P^1$. A member $Q_{u:v}$ is singular if and only if $\det M(u,v)=0$. A direct calculation yields \[\det M(u,v)=\frac{1}{4}u(u-2v)(7u^2-20uv-4v^2),\] and so there are four singular members over $\Qbar$:
\begin{itemize}
    \item $[u:v]=[0:1]$: this is $Q_2$, a rank $3$ cone.
    \item $[u:v]=[2:1]$: this is $2Q_1+Q_2$, a rank $3$ cone.
    \item $[u:v]=[t:1]$ with $t=\frac{10\pm 8\sqrt{2}}{7}$: two conjugate rank $3$ cones over $L=\Q(\sqrt{2})$.
\end{itemize}
Intersecting a cone with a hyperplane that does not meet the vertex produces a base conic, which parametrizes a family of ruling lines. Intersecting these lines with $C$ yields a $g^1_2$. (Recall that $g_d^r$ denotes a linear system of degree $d$ and dimension $r+1$, which in turn produces a degree $d$ map from $C$ to $\P^r$.) 

Two of these conics are already defined over $\Q$:
\begin{itemize}
    \item From $Q_2$ (with vertex $[0:0:0:1]$ and base plane $w=0$): \[C_{1}:4x^2-2xy+y^2-2yz+2z^2=0.\]
    \item From $2Q_1+Q_2$ (with vertex $[0:0:1:0]$ and base plane $z=0$): \[C_{2}:8x^2+4xy+y^2+2w^2=0.\]
\end{itemize}
It can be checked in Magma using the intrinsic \texttt{HasRationalPoint} that neither of these conics has a rational point.

So far, we have produced two of the four divisor classes and shown that they contain no rational divisors. To find the other two, we recall that a smooth quadric surface $Q\subset\P^3$ is isomorphic to $\P^1\times\P^1$ via the Segre embedding. (See Chapter I of \cite{hartshorne2013algebraic} for details.) On $\P^1\times\P^1$ there are two families of curves, $\P^1\times\{\text{pt}\}$ and $\{\text{pt}\}\times\P^1$, each of which are mapped isomorphically to a line in $\P^3$ under the Segre embedding. Thus, $Q$ contains two families of ruling lines of the quadric. Now suppose that $L\subset Q$ is one of these ruling lines. Then since $C=Q\cap Q'$ for some other member of the pencil $Q'$, we have that $C\cap L=Q'\cap L$, which has degree two by Bezout's Theorem. This means that $L\cap C$ is an effective divisor of degree $2$ on $C$, and so as $L$ varies in one ruling, we sweep out all divisors of a $g_2^1$. 

We start by determining the smooth quadrics with square discriminant, because these have rulings defined over $\Q$. From the equation $\det M(u,v)=\frac{1}{4}u(u-2v)(7u^2-20uv-4v^2)$, we set $x=u/v$ and consider the curve $y^2=x(x-2)(7x^2-20x-4)$. A calculation in \texttt{Magma} tells us that this is a rank $0$ elliptic curve with four rational points: $(0 : 0 : 1), (2 : 0 : 1), (2 : -32 : 3), (2 : 32 : 3)$. When $x=0$ and $x=2$, the corresponding quadrics are the singular ones dealt with above. Thus, we are led consider the only smooth quadric with square discriminant $Q_s:2Q_1+3Q_2=0$, which is given by $16x^2+3y^2-4yz+4z^2+2w^2=0$. Over $\overline{\Q}$, we have that $Q_s\cong\P^1\times\P^1$, and because the discriminant is square (a calculation shows that it is $16$), the rulings are in fact defined over $\Q$. Completing the square on the portion of $Q_s$ involving $y$ and $z$ gives us the equivalent form $Q_s:16x^2+3(y-\frac{2}{3}z)^2+\frac{8}{3}z^2+2w^2=0$, which visibly has no real solutions in $\P^3$, and so there are no real (or rational) lines contained in $Q_s$. Thus, we have found four distinct $g_2^1$'s on $C$ defined over $\Q$ that contain no rational effective divisors. It follows that $C$ has no quadratic points.

\end{proof}
\end{proposition}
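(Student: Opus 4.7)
The plan is to reduce the problem from ``no quadratic points'' to ``no degree $2$ effective $\Q$-rational divisor,'' exploit the fact that the period of $C$ is $2$ so that $\operatorname{Pic}^2(C)(\Q)$ is a principal homogeneous space for $E(\Q)\cong\Z/4\Z$, and then exhibit all four degree $2$ divisor classes and show each is empty of $\Q$-rational effective representatives. The reduction is standard Galois descent: a quadratic point $P$ would give an effective degree $2$ divisor $P+\sigma P$ fixed by $\Gal$, hence defined over $\Q$. Conversely, any rational effective degree $2$ divisor is a sum of at most two Galois-conjugate points, so produces a point of degree $\leq 2$.

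Next, I would use the concrete model of $C$ as the intersection of two quadrics $Q_1, Q_2 \subset \P^3$ to enumerate the linear systems $g^1_2$. Each such system cuts out a degree $2$ divisor class on $C$, and one can produce them geometrically by intersecting $C$ with the ruling lines of quadrics in the pencil $\{uQ_1+vQ_2\}$. The singular (rank $3$) members of the pencil contribute base conics whose rulings sweep out a $g^1_2$; the singular members defined over $\Q$ can be detected from $\det M(u,v)=0$. Additionally, smooth quadrics in the pencil with square discriminant are isomorphic to $\P^1\times\P^1$ over $\Q$, and their two families of $\Q$-rational rulings likewise give $g^1_2$'s. I would compute $\det M(u,v)$ explicitly, solve for the singular $[u{:}v]$, and find which smooth quadric has square discriminant by locating rational points on the genus one curve $y^2=\det M(x,1)/v^4$ parametrizing square values.

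Having produced four $g^1_2$'s defined over $\Q$, I would verify that they are pairwise distinct (so they must be the four classes of $\operatorname{Pic}^2(C)(\Q)$), and then check that none of them contains a $\Q$-rational effective divisor. For each singular cone this reduces to checking that the corresponding base conic has no $\Q$-point, which \texttt{Magma}'s \texttt{HasRationalPoint} settles. For the smooth quadric with square discriminant, one shows it has no real points (say by completing the square) and hence contains no real ruling lines at all.

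The main obstacle will be ensuring that the four $g^1_2$'s produced this way really exhaust the four classes of $\operatorname{Pic}^2(C)(\Q)$. One needs the period-index bookkeeping ($\operatorname{Pic}^2(C)\cong E$ as $E$-torsor, $E(\Q)$ of order $4$) together with verification that the four constructed linear systems are distinct—this last point is the genuinely curve-specific input, and is where ruling out duplication among the two cones and the smooth-quadric rulings matters. Once that is in hand, the absence of rational points on each representative base conic, combined with the non-existence of real ruling lines on the square-discriminant quadric, completes the argument and shows the index exceeds the period $2$.
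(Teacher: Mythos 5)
Your proposal follows essentially the same route as the paper's proof: the Galois-descent reduction to degree $2$ rational effective divisors, the period-$2$/torsor argument identifying $\operatorname{Pic}^2(C)$ with $E(\Q)\cong\Z/4\Z$, the pencil of quadrics with its two rational rank-$3$ cones yielding pointless base conics, and the unique smooth square-discriminant member (found via the rank $0$ curve $y^2=x(x-2)(7x^2-20x-4)$) whose rulings are ruled out by the absence of real points. The one point you flag as an obstacle—that the four $g^1_2$'s are pairwise distinct and hence exhaust the four classes—is exactly the step the paper handles by assertion, so your plan is sound and matches the published argument.
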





\subsubsection{16.48.1.l.1} Now consider the genus one curve $C$ given by the intersection of the quadrics $Q_1:16x^2 + 8xy + y^2 + z^2 - zw=0$ and $Q_2:16xy + z^2 - 2zw - w^2=0$. 
We attempt the same process as the above example. Let $A_1$ and $A_2$ denote the symmetric matrices of $Q_1$ and $Q_2$, respectively. Letting $t=v/u$ (using the notation from the previous example), we arrive at $\det(A_1+tA_2)=16t(t+1)(8t^2+8t+1)$, and so $t=0,-1,$ and $\frac{-1\pm\sqrt{2}}{4}$ give us rank $3$ quadrics in the pencil, and hence degree-$2$ divisor classes on $C_{\overline{\Q}}$. The quadric corresponding to $t=0$ is $Q_1$. For $Q_1$, we have $\partial_x=32x+8y,\partial_y=8x+2y,\partial_z=2z-w,\partial_w=-z$, and setting these to zero gives $z=0,w=0,y=-4x$. It follows that the unique singular point (the vertex) is $V=[1:-4:0:0]\in\P^3(\Q)$. Now pick the rational point $b=[0:0:1:1]$ on the conic $Q_1\cap\{y=0\}$. The line through the vertex and this base point is \[\ell:[s:t]\mapsto (x,y,z,w)=(s,-4s,t,t).\] We intersect $\ell$ with $Q_2$ and get $Q_2\cap\ell=16(s)(-4s)+t^2-2t^2-t^2=-64s^2-2t^2=0$, which implies that $t^2=-32$, and so $t=\pm 4\sqrt{-2}\in\Q(\sqrt{2}).$ Taking $s=1$ and $t=4\sqrt{-2}$ gives the point $[1:-4:4\sqrt{-2}:4\sqrt{-2}]$. This point corresponds to an effective degree two rational divisor and consequently a degree two map to $\P^1$. It follows that this modular curve has infinitely many quadratic points.

\subsection{Genus greater than or equal to two}

Since a curve of genus greater than or equal to two has an infinite number of quadratic points if and only if it is either hyperelliptic or bielliptic, we consider these cases separately. 

\subsubsection{Hyperelliptic candidates} Let us discuss the hyperelliptic case. From Lemma \ref{lemma:gonalitybasechange} we know that 
the modular curve $(X_H)_{\C}$ will have $\C$-gonality either $2$ or $1$. It cannot be one because that would imply that $X_H$ has genus $0$. 

By a work of Zywina \cite{Zywina_lowgonality}, we have a list of all the congruence subgroups that are hyperelliptic. Among them the Cummins-Pauli labels of those that have genus greater than or equal to $2$ and have prime power level are given in the set 

Gonality2:=\{\texttt{8A2}, \texttt{8B2}, \texttt{8C2}, \texttt{9A2}, \texttt{9B2}, \texttt{11A2}, \texttt{13A2}, \texttt{16A2}, \texttt{16B2}, \texttt{16C2}, \texttt{16D2}, \texttt{16E2}, \texttt{16F2}, \texttt{16G2}, \texttt{16H2}, \texttt{16I2}, \texttt{16J2}, \texttt{16K2}, \texttt{16L2}, \texttt{19A2}, \texttt{23A2}, \texttt{25A2}, \texttt{25B2}, \texttt{25C2}, \texttt{25D2}, \texttt{25E2}, \texttt{25F2}, \texttt{27A2}, \texttt{27B2}, \texttt{29A2}, \texttt{31A2}, \texttt{32A2}, \texttt{32B2}, \texttt{32C2}, \texttt{37A2}, \texttt{64A2}, \texttt{8B3}, \texttt{16B3}, \texttt{16C3}, \texttt{16D3}, \texttt{16E3}, \texttt{16F3}, \texttt{16I3}, \texttt{16J3}, \texttt{16M3}, \texttt{16S3}, \texttt{32B3}, \texttt{32C3}, \texttt{32D3}, \texttt{32H3}, \texttt{32K3}, \texttt{32M3}, \texttt{41A3}, \texttt{64A3}, \texttt{25A4}, \texttt{25D4}, \texttt{32B4}, \texttt{47A4}, \texttt{16G5}, \texttt{59A5}, \texttt{71A6}, \texttt{32E7}, \texttt{64D7}\}.

For each of the congruence groups whose label is in the set Gonality2 we use Proposition \ref{prop:boundingthelevel} to get an upper bound on the level of $H$. If the level of congruence subgroup is $p^n$ for some natural number $n$, then an upper bound on the $\GL_2$ level of $H$ is summarized below.

\begin{itemize}
    \item For $p=2$, an upper bound is $512.$
    \item For $p=3$, an upper bound is $81.$
    \item For $p=5$, an upper bound is $125.$
    \item For $p \in \{11,13,19,23,29,31,37,41,47,59,71\}$, the upper bound is $p.$
\end{itemize}

For $p=2$, we have an upper bound of $512$ for congruence subgroups with labels $\{\texttt{64A3}, \texttt{64D7}\}.$ For these two groups we observe that there is no subgroup $H$ of $\GL_2(\Z_2)$ of $\GL_2$ level $512$ such that $\Gamma_H$ has label in $\{\texttt{64A3}, \texttt{64D7}\}.$ So, if there exists a $H$ such that $\Gamma_H$ has label in $\{\texttt{64A3}, \texttt{64D7}\}$, then its $\GL_2$ level is at most $256.$ These computations can be verified in the file \href{https://github.com/mjcerchia/two-adic-galois-images-quadratic/blob/main/Hyperelliptic%20prime%20power%20level%20upper%20bound%20on%20GL2%20level}{\texttt{Hyperelliptic prime power level upper bound on GL2 level} }.

The reason for lowering the level for these two cases is not theoretical but rather pragmatic because of the completeness of LMFDB database as explained in the beginning of this section. We then browse through the database and look at all the groups $H$ such that  $\Gamma_H$ has label in the set Gonality2. The set of LMFDB labels of all the candidates is available at \href{https://github.com/mjcerchia/two-adic-galois-images-quadratic/blob/main/Hyperellipticcandidates}{\texttt{Hyperellipticcandidates.} }The maximum $\GL_2$ level in this case is $71$. We provide a summary in the table below.

\begin{table}[H]
\begin{tabular}{|l|l|}
\hline
Genus & No of candidates \\ \hline
2     & 123              \\  \hline
3     & 225              \\  \hline
4     & 1                \\  \hline
5     & 1                \\  \hline
6     & 1                \\  \hline
7     & 8                \\  \hline

\end{tabular}
\caption{Number of groups of prime power level that can possibly be hyperelliptic.}
\label{table:hypcandidates}
\end{table}
\vspace*{1em}

\subsubsection{Positive rank bielliptic candidates}

Let $H$ be an open subgroup of $\GL_2(\Zhat)$ that contains $-I$ and has surjective determinant and is of prime power level. If $X_H$ is positive rank bielliptic, then from Lemma \ref{lemma:gonalitybasechange} we know that 
the modular curve $(X_H)_{\C}$ will also be positive rank bielliptic. 

By a work of Zywina \cite{Zywina_lowgonality}, we have a list of all the congruence subgroups that are bielliptic. Among them the Cummins-Pauli labels of those that have genus greater than or equal to $2$ and have prime power level and are not hyperelliptic are given in the following set

biellipticlabels:= \{\texttt{7A3}, \texttt{8A3}, \texttt{11A3}, \texttt{16A3}, \texttt{16G3}, \texttt{16H3}, \texttt{16K3}, \texttt{16L3}, \texttt{16N3}, \texttt{16O3}, \texttt{16P3}, \texttt{16Q3}, \texttt{16R3}, \texttt{27A3}, \texttt{32A3}, \texttt{32E3}, \texttt{32F3}, \texttt{32G3}, \texttt{32I3}, \texttt{32J3}, \texttt{32L3}, \texttt{32N3}, \texttt{32O3}, \texttt{32P3}, \texttt{32Q3}, \texttt{43A3}, \texttt{49A3}, \texttt{64B3}, \texttt{9A4}, \texttt{9B4}, \texttt{9C4}, \texttt{11A4}, \texttt{16A4}, \texttt{16B4}, \texttt{16C4}, \texttt{27A4}, \texttt{27B4}, \texttt{27C4}, \texttt{27D4}, \texttt{29A4}, \texttt{32A4}, \texttt{32C4}, \texttt{37B4}, \texttt{53A4}, \texttt{61A4}, \texttt{81A4}, \texttt{8A5}, \texttt{11A5}, \texttt{16A5}, \texttt{16B5}, \texttt{16C5}, 
    \texttt{16D5}, \texttt{16E5}, \texttt{16F5}, \texttt{16H5}, \texttt{16I5}, \texttt{16J5}, \texttt{16K5}, \texttt{16M5}, \texttt{16N5}, \texttt{16O5}, \texttt{17A5}, \texttt{32A5}, \texttt{32B5}, \texttt{32C5}, \texttt{32D5}, \texttt{32E5}, \texttt{32F5}, \texttt{32G5}, \texttt{32H5}, \texttt{32I5}, \texttt{32J5}, 
    \texttt{32K5}, \texttt{32L5}, \texttt{32M5}, \texttt{32N5}, \texttt{32O5}, \texttt{41A5}, \texttt{64A5}, \texttt{64B5}, \texttt{64C5}, \texttt{64D5}, \texttt{79A6}, \texttt{27D7}, \texttt{27E7}, \texttt{32A7}, \texttt{32G7}, \texttt{32H7}, \texttt{32K7}, \texttt{64F7}, \texttt{81A7}, \texttt{83A7}, \texttt{89A7}, \texttt{101A8}, \texttt{16E9}, \texttt{32E9}, \texttt{64A9}, \texttt{131A11}\}.

For each of the congruence groups whose label is in the set biellipticlabels we use Proposition \ref{prop:boundingthelevel} to get an upper bound on the level of $H$. If the level of congruence subgroup is $p^n$ for some natural number $n$, then an upper bound on the $\GL_2$ level of $H$ is summarized below.

\begin{itemize}
    \item For $p=2$, an upper bound is $512.$
    \item For $p=3$, an upper bound is $243.$
    \item For $p=7$, an upper bound is $343.$
    \item For $p \in \{11,17,29,37,41,43,53,61,79,83,89,101,131\}$, the upper bound is $p.$
\end{itemize}

For $p=2$, we have an upper bound of $512$ for congruence subgroups with labels $\{\texttt{64B3}, \texttt{64A5}, \texttt{64B5}, \texttt{64C5},\texttt{64A9}\}.$ For $p=7$, we have an upper bound of $343$ for congruence subgroups with label \texttt{49A3}. For these groups we perform computations to observe that there is no subgroup $H$ of $\GL_2(\Z_2)$ or $\GL_2(\Z_7)$ of $\GL_2$ level $512$ or $343$ such that $\Gamma_H$ has label in $\{\texttt{64B3}, \texttt{64A5}, \texttt{64B5}, \texttt{64C5},\texttt{64A9}, \texttt{49A3}\}.$ So, if there exists an $H$ such that $\Gamma_H$ has label in $\{\texttt{64B3}, \texttt{64A5}, \texttt{64B5}, \texttt{64C5},\texttt{64A9}\}$, then its $\GL_2$ level is at most $256.$ If there exists an $H$ such that $\Gamma_H$ has label \texttt{49A3}, then its $\GL_2$ level is at most $49.$ These computations can be verified in the file \href{https://github.com/mjcerchia/two-adic-galois-images-quadratic/blob/main/Bielliptic%20prime%20power%20level%20upper%20bound%20on%20GL2%20level}{\texttt{Bielliptic prime power level upper bound on GL2 level} }.

We then browse through the LMFDB database and look at all the groups $H$ such that $\Gamma_H$ has label in the set  biellipticlabels. The set of LMFDB labels of modular curves associated to all such $H$ is available at \href{https://github.com/mjcerchia/two-adic-galois-images-quadratic/blob/main/biellipticlabels%20twist}{\texttt{biellipticlabels twist.} }The maximum $\GL_2$ level in this case is $131$. We provide a summary in the table below.
These modular curves along with the ones that are not hyperelliptic (of genus at least two) are our pool of candidates to check whether they are positive rank bielliptic or not.

\begin{table}[H]
\begin{tabular}{|l|l|}
\hline
Genus & No of groups \\ \hline
3     & 166              \\  \hline
4     & 52                \\  \hline
5     & 527                \\  \hline
6     & 1              \\  \hline
7     & 19                \\  \hline
8     & 1              \\  \hline
9     & 76 \\  \hline
11     & 1              \\  \hline

\end{tabular}
\caption{Number of groups of prime power level that are twists of some congruence subgroup with label in the set biellipticlabels.}
\label{table:bielliptic labels twists}
\end{table}
\vspace*{1em}

\subsection{Proof of Theorem \ref{thm:boundB}} We conclude this section with a proof of Theorem \ref{thm:boundB}. It follows from the observations (noted earlier in this section) that the maximum $GL_2$-level of associated groups is 32, 49, 71 and 131 for genus 0, genus 1, hyperelliptic and positive rank bielliptic case, respectively. 

\section{Hyperelliptic Modular Curves} \label{sec:hyp}

In section \ref{sec:enumerate} we provided a list of 359 candidate modular curves that could possibly be hyperelliptic. In this section we determine which ones are hyperelliptic.

Out of 359 candidates, for 301 curves we compute their canonical model using the function \texttt{FindModelOfXG} available at \cite{Zywina_github} and then use \texttt{IsHyperelliptic} command in \texttt{MAGMA} to determine that they are hyperelliptic. Please see Section $5$ of \cite{zywina2024explicitopenimageselliptic} for details on computing canonical models of modular curves. 

We now discuss remaining 58 curves. We know from Theorem 4.3 \cite{barsx0} that curves $X_0(47)$ (\mc{47.48.4.a.1}), $X_0(59)$ (\mc{59.60.5.a.1}) and $X_0(71)$ (\mc{71.72.6.a.1}) are hyperelliptic.

For curves \mc{32.96.7.r.1} and \mc{32.96.7.s.1} we rely on LMFDB database of modular curves for its Weierstrass model and hence know that they are hyperelliptic.

There are 44 curves with labels in the set \texttt{ptlessgenus0quo} of genus bigger than or equal to $3$ that have a pointless genus 0 modular curve as a quotient. So, from Corollary \ref{cor:unique genus 0} these cannot be hyperelliptic. These computations can be verified in the file \href{https://github.com/mjcerchia/two-adic-galois-images-quadratic/blob/main/Remaining%20cases-hyperelliptic}{\texttt{Hyperellipticcandidates}}.

The geometric Weierstrass models of curves \mc{16.96.3.ex.2} and \mc{16.96.3.ez.2} are given on LMFDB.
The geometric Weierstrass model of  \mc{16.96.3.ex.2} is 
\begin{align*}
    x^4 - 2x^2yz + 4x^2z^2 - 4yz^3 + 4z^4 &= 4w^2   \\
x^2 + y^2 + z^2 &= 0
\end{align*}

There is a degree $2$ map to the pointless conic $x^2+y^2+z^2=0$ by forgetting the $w$-coordinate, so this cannot be hyperelliptic.

The geometric Weierstrass model of  \mc{16.96.3.ez.2} is 
\begin{align*}
     - 2x^2yz - 4x^2z^2 - 4yz^3 - 4z^4 &= w^2 \\
     x^2 + y^2 + z^2 &=0
\end{align*}

For this curve too, there is a degree $2$ map to the pointless conic $x^2+y^2+z^2=0$ by forgetting the $w$-coordinate, so this cannot be hyperelliptic.

For labels in \{\mc{16.48.3.j.1}, \mc{16.48.3.l.1}, \mc{16.48.3.p.1}, \mc{16.96.3.ex.1}, \mc{16.96.3.ez.1} \} we compute the automorphism group over $\Q$ and observe that there is exactly one genus $0$ quotient that is pointless. So, these cannot be hyperelliptic. 

For \mc{16.48.3.z.1} and \mc{16.48.3.s.1} we were able to observe some automorphisms. For \mc{16.48.3.z.1} we observe that composing involutions $\iota_1 \colon C \to C$ given by $[x,y,z,w,t,u] \to [x,z,y,t/2,2w,-u]$ and $\iota_2 \colon C \to C$ given by $[x,y,z,w,t,u] \to [x,z,y,w,t,-u]$ gives us an involution $\iota$ such that the quotient of $C$ by $\iota$ is genus $0$ pointless conic. For \mc{16.48.3.s.1} we observe that taking quotient by $\iota \colon C \to C$ given by $[x,y,z,w,t,u] \to [x,y,z,t/2,2w,u]$ gives a pointless genus $0$ quotient. So, from Corollary \ref{cor:unique genus 0} these cannot be hyperelliptic.

Our computations can be verified in the file \href{https://github.com/mjcerchia/two-adic-galois-images-quadratic/blob/main/Remaining%20cases-hyperelliptic}{\texttt{Remaining cases-hyperelliptic}}.

We determine if these curves are positive rank bielliptic or not in later sections of this article. In conclusion, there are 306 hyperelliptic curves of prime power level and their LMFDB labels are given in Table \ref{tab:hyp}.

\section{positive rank bielliptic curves}\label{rankone}

In this section, we outline our strategy for demonstrating that a given candidate curve is positive rank bielliptic. The LMFDB labels of the candidate curves are available at \href{https://github.com/mjcerchia/two-adic-galois-images-quadratic/blob/main/biellipticlabels%20twist}{\texttt{biellipticlabels twist}}, and we know from section \ref{sec:hyp} that these are not hyperelliptic. In total there are 896 such candidates.

Recall that we defined a curve $X$ to be \textit{positive rank bielliptic} if there is a degree two map over $\Q$ to a positive rank elliptic curve, and such a curve has an infinite number of quadratic points. Any degree two map to an elliptic curve necessarily comes from the quotient $X/\iota$ of some involution $\iota$. Now that we have limited the potential positive rank bielliptic modular curves of prime power level to a finite list, our general strategy is to determine all involutions of a candidate curve $X$, locate any genus one quotients $X/\iota$, and compute the ranks of these quotient curves (if they indeed have a rational point). The genus one quotients we obtain in this way often have complicated models, making it difficult to find rational points (which is necessary for \texttt{MAGMA} to compute the rank). In these cases, we need to first compute a simpler model of the quotient, which we do by intersecting with hyperplanes and searching for low degree divisors. If successful, we can use Riemann Roch to construct a model of the form $y^2=f_4(x)$, where $f_4$ is a degree four polynomial with rational coefficients. There are 30 curves for which we compute automorphism groups either using canonical model or singular model and find quotients that are elliptic curves of rank $1$. The folder \href{https://github.com/mjcerchia/two-adic-galois-images-quadratic/tree/main/Magma%20Code}{Magma Code} contains verifications for each one of these curves. Each curve has a separate file name corresponding to its LMFDB label.

\subsection{LMFDB deductions} The simplest way to determine that a candidate curve is positive rank bielliptic is to infer it from existing data on the LMFDB. For instance, the subgroup corresponding to the modular curve with label \mc{16.96.5.a.1} has index $96$ in $\GL_2(\Z/16\Z)$ and there is a degree two map to the modular curve with label \mc{16.48.1.de.1}, which is a rank one elliptic curve whose corresponding subgroup has index $48$ in $\GL_2(\Z/16\Z)$. It follows then that \mc{16.96.5.a.1} is positive rank bielliptic. The number of cases where such a deduction works are 140 and can be found in Table \ref{tab:LMFDB deductions}.

\subsection{Already in literature} There are some curves in our list that are already known to be positive rank bielliptic. These are $X_0(43)$(\mc{43.44.3.a.1}), $X_0(53)$(\mc{53.54.4.a.1}), $X_0(61)$(\mc{61.62.4.a.1}), $X_0(79)$(\mc{79.80.6.a.1}), $X_0(83)$(\mc{83.84.7.a.1}), $X_0(89)$(\mc{89.90.7.a.1}), $X_0(101)$(\mc{101.102.8.a.1}) and $X_0(131)$(\mc{131.132.11.a.1}). Please see Theorem 4.3 of \cite{barsx0} for these.

\subsection{Computing nicer models of quotients} In several cases, we are able to compute the automorphism group of a modular curve over $\Q$ and find all genus one quotients by an involution. However, as mentioned above, these models are typically too complicated for \texttt{MAGMA} to be able to find rational points on, and so if they are in fact elliptic curves, we cannot simply compute their ranks. If this is the case for a given modular curve, then the strategy for computing a new model of a given quotient curve $C$ is the following:

\begin{itemize}
    \item Search for divisors on $C$ by intersecting with hyperplanes. In practice, a homogeneous coordinate function often works, but we also iterate over hyperplanes whose coefficients lie in $\{0,-1,1\}$. 
    \item If we are lucky, we find a degree two divisor $D$, from which we can build a model of the form $y^2=f_4(x)$, where $f_4$ is a quartic polynomial in $x$. We do this by using functions in the Riemann-Roch spaces for $D$ and $2D$ to construct a map to the weighted projective space $\P(2,1,1)$. Such a model is helpful to us because it is much faster to search for rational points in \texttt{MAGMA}. It is also of the form that allows us to check for local solubility with the \texttt{MAGMA} intrinsic \texttt{HasPointsEverywhereLocally}.
    \item Any time we use this method but cannot find a degree two divisor, we are able to find a degree three divisor. The only difference is that we now find three functions in the Riemann-Roch space for $D$ which gives us a map to $\P^2$. The resulting model is simpler than the original but more complicated than one of the form $y^2=f_4(x)$. 
\end{itemize}
\begin{example}
    We show that the non-hyperelliptic modular curve of level $32$ with LMFDB label \href{https://beta.lmfdb.org/ModularCurve/Q/32.96.5.m.1}{32.96.5.m.1} corresponding to the subgroup $H$ of $\GL_2(\Z/32\Z)$ generated by the matrices $\begin{bmatrix}1&12\\8&31\end{bmatrix}$,$\begin{bmatrix}11&23\\ 16&1\end{bmatrix}$, $\begin{bmatrix}13&9\\16&31\end{bmatrix}$, and $\begin{bmatrix}25&3\\0&3\end{bmatrix}$ has an infinite number of quadratic points. In particular, we show that $X_H$ is positive rank bielliptic. 
    
    After using \texttt{MAGMA} to compute the automorphism group of $X_H$ over $\Q$, we determine that there are three genus one quotients by some involution. Call these curves $C_1,C_2$, and $C_3$. The Jacobian of $X_H$ has exactly one rank 1 elliptic curve factor, which we will denote by $E$. Over $\F_5$, $C_1$ and $C_3$ have a different number of points than $E$, which means that the Jacobian of neither $C_1$ nor $C_3$ could be be isogenous to $E$ since they have different $L$-functions (which is an isogeny invariant). Consequently, neither $C_1$ nor $C_3$ are rank 1. The model for $C_2$ is given in $\P^7$ by the zero locus of the following $20$ quadrics: 
\begin{align*}
Q_1&=x_1^2 + 2x_3^2 - 8x_6^2 + x_7^2 - 8x_5x_8,\\
Q_2&=x_1x_2 - x_6^2 + x_8^2,\\
Q_3&=8x_1x_4 - x_7^2 + 8x_5x_8,\\
Q_4&=x_1x_5 + 8x_4x_5 - x_3x_7,\\
Q_5&=4x_1x_6 - 32x_2x_6 + x_5x_7,\\
Q_6&=2x_3x_5 + x_1x_7 - 8x_6x_8,\\
Q_7&=-x_6x_7 + x_1x_8 + 8x_4x_8,\\
Q_8&=32x_2^2 - 4x_6^2 - x_5x_8 + 4x_8^2,\\
Q_9&=8x_2x_3 - x_7x_8,\\
Q_{10}&=8x_2x_4 - x_8^2,\\
Q_{11}&=x_2x_5 - x_4x_8,\\
Q_{12}&=x_2x_7 - x_6x_8,\\
Q_{13}&=-2x_4x_5 - x_6x_7 + 8x_2x_8 + 8x_4x_8,\\
Q_{14}&=8x_3x_4 - x_5x_7,
\end{align*}

\begin{align*}
Q_{15}&=8x_3x_6 - x_7^2,\\
Q_{16}&=-x_5x_6 + x_3x_8,\\
Q_{17}&=8x_4^2 - x_5x_8,\\
Q_{18}&=8x_4x_6 - x_7x_8,\\
Q_{19}&=-x_5x_6 + x_4x_7,\\
Q_{20}&=2x_5^2 + x_7^2 - 8x_5x_8 - 8x_8^2
\end{align*}

    This model is too complicated for \texttt{MAGMA} to find a rational point on (if one exists). Intersecting with the hyperplane $X_1=0$, we find the rational point $(0:-1/4:-2:-1/2:2:-1:4:1)$, from which we are able to use Riemann-Roch to construct the model $y^2 - 131072y = x^3 + 4096x^2 + 6291456x$ for $C_3$, which has rank $1$. This is indeed isogenous to the rank one elliptic curve factor of the Jacobian, $E$, which has a model given by $y^2 = x^3 + x^2 + x + 1$. It follows that $X_H$ is positive rank bielliptic and therefore has an infinite number of quadratic points. The \texttt{MAGMA} file \href{https://github.com/mjcerchia/two-adic-galois-images-quadratic/blob/main/Magma%20Code/32-96-5-m-1.m}{32-96-5-m-1.m} verifies these claims. 
\end{example}  

\begin{example} 
We conclude this section with the example of \mc{27.108.7.g.1}. Since this has genus $7$ we know from Theorem \ref{thm:CSineq} that up to isomorphism it has at most one genus 1 quotient. Since this is geometrically bielliptic, from Lemma 5 \cite{MR1055774} we know that it has one genus 1 quotient defined over $\Q.$ Let $C$ be that unique genus 1 quotient. There are three elliptic curves in Jacobian decomposition of \mc{27.108.7.g.1} that correspond to newforms \texttt{27.2.a.a}, \texttt{243.2.a.a} and \texttt{243.2.a.b}. The map to \texttt{27.2.a.a} is of degree 3 and it is given on LMFDB. Working modulo 7 we observe that Jacobian of $C$ must be isogenous to \texttt{243.2.a.a} which is the rank $1$ factor. So, if we can show that $C$ has a rational point, then we are done.

It takes a long time to compute its automorphism group over $\Q$ so we compute its automorphisms over $F_{17}$ and lift it to $\Q$ using the script \href{https://users.wfu.edu/rouseja/2adic/autocompute.txt}{\texttt{autocompute.txt}} due to Rouse and Zureick-Brown which is part of the code associated with \cite{rzb}. We then compute its quotient which is a genus 1 curve, intersect it with a hyperplane to find a point. This computation can be verified in the file \href{https://github.com/mjcerchia/two-adic-galois-images-quadratic/blob/main/Magma%20Code/27-108-7-g-1.m}{\texttt{27-108-7-g-1.m}}.
\end{example}

The labels of positive rank bielliptic curves that are not hyperelliptic are given in Tables \ref{tab:LMFDB deductions} and \ref{tab:posrankbi}.

\section{Non positive rank bielliptic curves}\label{nonrankone}

To show that a modular curve $X_H$ is not positive rank bielliptic, we must demonstrate that there cannot be a degree two map to a positive rank elliptic curve. The simplest approach is to compute automorphism group over $\Q$ and show either there are no genus one quotients by an involution, which would mean that the curve is not bielliptic, let alone positive rank bielliptic or show that all genus 1 quotients have finitely many points. This is not always computationally feasible. Often the automorphism groups are difficult to compute, and so in these cases we instead reduce our curve mod a prime $p$ of good reduction and show that the reduced curve fails to be bielliptic or genus 1 quotients do not match with positive rank elliptic curve factor given in decomposition of Jacobian. Even when we are able to compute automorphism group sometimes we have to simplify models to deduce whether a given genus 1 curve has finitely many points.

\subsection{LMFDB Deductions} In some cases LMFDB has genus 1 quotients that correspond to positive rank elliptic curve factors in decomposition of Jacobian so we can directly observe whether the curve is positive rank bielliptic or not. Consider for example, \mc{16.96.5.bd.1}. There is exactly one rank 1 newform \texttt{128.2.a.a} in decomposition of its Jacobian. From lemma \ref{lemma:nfactors} it suffices to find one pointless genus 1 curve whose Jacobian is isogenous to \texttt{128.2.a.a}. The curve \mc{16.48.1.cw.1} satisfies this property. The curves for which this argument works are \{\mc{16.96.5.bd.4}, \mc{16.96.5.cd.1}, \mc{16.96.5.cd.2}, \mc{16.96.5.h.1}, \mc{16.96.5.h.2}, \mc{16.96.5.a.2}, \mc{16.96.5.bs.2}, \mc{16.96.5.ch.1}, \mc{16.96.5.dk.1}, \mc{16.96.5.dp.1}, \mc{16.96.5.x.1}\}. So none of these are positive rank bielliptic. For curves with labels in \{\mc{16.96.5.dm.1}, \mc{16.96.5.dt.1}\} there are two rank 1 newform factors \texttt{256.2.a.a}, \texttt{256.2.a.b}. For both of these there is one pointless genus 1 quotient given on LMFDB whose Jacobian is isogenous to \texttt{256.2.a.a}. We now show that there is no genus 1 quotient whose Jacobian is isogenous to \texttt{256.2.a.b}. We compute all the automorphisms modulo $5$ and observe that any genus 1 quotient has either 4 or 6 points. If $E$ is isogenous to \texttt{256.2.a.b} then it must have 10 points modulo 5. So, these two curves are not positive rank bielliptic. Our computations can be verified in the files \href{https://github.com/mjcerchia/two-adic-galois-images-quadratic/tree/main/Not%20positive%20rank}{\texttt{16-96-5-dm-1.m}} and \href{https://github.com/mjcerchia/two-adic-galois-images-quadratic/tree/main/Not%20positive%20rank}{\texttt{16-96-5-dt-1.m}}.

The curve \mc{16.96.3.fc.1} has a degree 2 map to \mc{16.48.1.bx.1} whose Jacobian is isogenous to \texttt{256.2.a.a}. The curve \mc{16.48.1.bx.1} is pointless and is the only genus 1 quotient that corresponds to \texttt{256.2.a.a}. So there does not exist a degree 2 map to the positive rank elliptic curve in its Jacobian. Hence this curve is not positive rank bielliptic. This argument is also applicable to the curve \mc{16.96.3.fj.1}. This can be verified in the file \href{https://github.com/mjcerchia/two-adic-galois-images-quadratic/blob/main/biellipticlabels%20twist}{\texttt{Section 7.1.m}}.

For the curves \mc{16.96.3.cz.1}, \mc{16.96.3.dm.1},  \mc{16.96.3.bf.1} and \mc{16.96.3.bo.1} we observe that the group $\Aut(X_{\Gamma_H},\pi_{\Gamma_H})$ (please see section \ref{sec:modularcurves} for this notation) is of order 32. From Table 1 \cite{MR2182037} we know that the order of automorphism group over $\C$ must be 32. So, the automorphism group over $\Q$ is the subgroup of $\Aut(X_{\Gamma_H},\pi_{\Gamma_H})$ defined over $\Q$. We observe that the order of this subgroup in all of these cases is 8, hence there can be maximum $7$ involutions, and all $7$ quotients are given on LMFDB. The genus 1 quotients corresponding to rank 1 newform factors are pointless. So, none of these are positive rank bielliptic. This can be verified in the file \href{https://github.com/mjcerchia/two-adic-galois-images-quadratic/blob/main/biellipticlabels%20twist}{\texttt{Section 7.1.m}}.

From Theorem \ref{thm:CSineq} we know that a curve of genus at least 6 must have at most one genus 1 quotient by an involution. For curves with labels in the set \texttt{g9} (available at github) their genus 1 quotient is given on LMFDB and their Jacobians have rank $0$. So, these curves cannot be positive rank bielliptic. This can be verified in the file \href{https://github.com/mjcerchia/two-adic-galois-images-quadratic/blob/main/biellipticlabels%20twist}{\texttt{Section 7.1.m}}.

From Lemma \ref{lemma:jacfactor} we know that if the Jacobian has rank $0$ or does not have any positive rank elliptic curve in its decomposition, then it cannot be positive rank bielliptic. The number of such candidates is 537 and this list is available at \href{https://github.com/mjcerchia/two-adic-galois-images-quadratic/blob/main/biellipticlabels%20twist}{\texttt{biellipticlabels twist}}.

\subsection{Already in literature} From Theorem 4.3 \cite{barsx0}, the modular curve $X_0(81)$(\mc{81.108.4.a.1}) is not positive rank bielliptic.

\subsection{Not bielliptic} If a curve $X$ is not bielliptic, it cannot be positive rank bielliptic. If we can determine all the involutions of a curve $X$ (which we usually obtain by first computing the automorphism group of $X$) we can form quotients of $X$ by each of these involutions. If none of these are genus $1$, then there cannot be a degree two map from $X$ to an elliptic curve, and so we are done. 

\begin{example}
    Consider the non-hyperelliptic modular curve with LMFDB label \mc{16.96.5.dy.1}. A canonical model of this curve in $\P^4$ is defined by $3$ equations, and we find using \texttt{MAGMA} that there are no genus one quotients by involutions. Consequently, this modular curve is not bielliptic, and so it has a finite number of quadratic points. See the file \href{https://github.com/mjcerchia/two-adic-galois-images-quadratic/blob/main/Not%20positive%20rank/16-96-5-dy-1.m}{16-96-5-dy-1.m} for verification. 
\end{example}

Often it is slow to compute the automorphism group of a curve over $\Q$, and for such a curve $C$ we try to reduce the curve modulo a prime of good reduction and then compute the automorphism group of the reduced curve $C_p$ over the corresponding finite field. If this is successful and there are no genus one quotients, then $C$ cannot be positive rank bielliptic. 

\begin{example}
Consider the nonhyperelliptic curve with LMFDB label \mc{16.96.5.ec.1} corresponding to subgroup $H$ of $\GL_2(\Z/16\Z)$ generated by the matrices  $\begin{bmatrix}3&6\\14&13
    \end{bmatrix}$,$\begin{bmatrix}1&4\\14&15\end{bmatrix}$, $\begin{bmatrix}11&14\\14&5\end{bmatrix}$, and $\begin{bmatrix}13&15\\6&7\end{bmatrix}$. It is difficult to compute the automorphism group of this curve over $\Q$, but we are able to reduce the curve mod $3$ and compute the automorphism group of this reduced curve. We find there are no genus one quotients by involutions, and so the curve cannot be bielliptic over $\Q$ either. See the file \href{https://github.com/mjcerchia/two-adic-galois-images-quadratic/blob/main/Not%20positive%20rank/16-96-5-ec-1.m}{16-96-5-ec-1.m} for verification.

\end{example}

Even if there are genus one quotients by an involution of the reduction of a modular curve, we come sometimes rule out that such a curve is positive rank bielliptic by comparing the number of points on the quotient curves to any rank one elliptic curve factors in the Jacobian decomposition, as in the next example. 

\begin{example} \label{ex:mismatch}
    Consider the non-hyperelliptic modular curve with LMFDB label 16.192.5.bs.1. Over $\F_5$, there is one genus one quotient by involution, which has a different number of points than the single rank one elliptic curve factor of the Jacobian of the modular curve. Consequently, this curve is not positive rank bielliptic and so it has a finite number of quadratic points. See \href{https://github.com/mjcerchia/two-adic-galois-images-quadratic/blob/main/Not%20positive%20rank/16-192-5-bs-1.m}{16-192-5-bs-1.m} for verification. 
\end{example}

\subsection{Pointless genus one quotient} If on the other hand there exists a degree two map from a modular curve $X$ to a genus $1$ curve $C$, where $C\cong X/\iota$ for some involution $\iota$ of $X$, it might be the case that $C$ does not have a rational point, which would mean that it is not an elliptic curve over $\Q$. To determine this, we check local solubility. If no rational point exists, then the map $X\to X/\langle\iota\rangle\to \operatorname{Jac}(X/\langle\iota\rangle)$ has degree greater than two. Note that $\operatorname{Jac}(X/\langle\iota\rangle)$ -- the Jacobian of $X/\iota$ -- is an elliptic curve appearing in the Jacobian decomposition of $X$. For some genus $5$ curves, using lemma \ref{lemma:nfactors} if an elliptic curve $E$ appears $n$ number of times in the Jacobian decomposition of $X$, then we find $n$ pointless genus 1 curves that come from $n$ different involutions whose Jacobians are isogenous to $E$. In all such cases $n$ is either one or two.

\begin{example}
The non-hyperelliptic modular curve of level $32$ with LMFDB label \mc{32.96.5.f.1} corresponding to subgroup $H$ of $\GL_2(\Z/32\Z)$ generated by the matrices  $\begin{bmatrix}7&5\\4&7
    \end{bmatrix}$,$\begin{bmatrix}29&24\\8&7\end{bmatrix}$, and $\begin{bmatrix}29&29\\4&5\end{bmatrix}$ has a finite number of quadratic points. In particular, we show that it is not positive rank bielliptic.
A canonical model for $X_H$ is given in $\P^4$ by the zero locus of the following polynomials: \begin{align*} 
2x^2 +wt,\\ 
2yz - w^2,\\
2y^2+32z^2+t^2.
\end{align*} We determine the automorphism group of this curve over $\Q$ with \texttt{MAGMA} and identify any involutions. We then find that there are three genus one quotients by involutions, call them $C_1$, $C_2$, and $C_3$. Consequently, $X_H$ is bielliptic (but not necessarily positive rank bielliptic). There is exactly one rank one elliptic curve factor in the Jacobian decomposition of $X_H$, and it has the model over $\Q$ given by $E:y^2 = x^3 - 2x$. This means there is some map from $X_H$ to a rank one elliptic curve (namely $E$); it remains to be shown that this map cannot be degree two. The models for each of the quotients are complicated -- $20$ equations in $\P^7$. Over $\F_5$, $C_1$ and $C_2$ have a different number of points than $E$, and so we do not have to consider them, since this implies that their Jacobians could not be isogenous to $E$. The model for $C_3$ is given in $\P^7$ by the zero locus of the following $20$ quadrics:
\begin{align*}
Q_1&=2048x_1^2 + 16x_3^2 + x_5x_8,\\ 
Q_2&=512x_1x_2 + x_5x_8,\\ 
Q_3&=16x_1x_4 - 4x_7^2 - x_8^2,\\ 
Q_4&=4x_1x_5 - 4x_3x_7 - x_4x_8,\\ 
Q_5&=512x_1x_6 + x_5x_7,\\ 
Q_6&=x_3x_5 + 128x_1x_7,\\ 
Q_7&=x_4x_5 + 128x_1x_8,\\ 
Q_8&=128x_2^2 + 512x_6^2 + x_5x_8,\\ 
Q_9&=4x_2x_3 - x_7x_8,\\ 
Q_{10}&=4x_2x_4 - x_8^2,\\ 
Q_{11}&=x_2x_5 - x_4x_8,\\ 
Q_{12}&=x_2x_7 - x_6x_8,\\ 
Q_{13}&=x_4x_5 + 128x_6x_7 + 32x_2x_8,\\ 
Q_{14}&=4x_3x_4 - x_5x_7,\\ 
Q_{15}&=4x_3x_6 - x_7^2,\\ 
Q_{16}&=-x_5x_6 + x_3x_8,\\ 
Q_{17}&=4x_4^2 - x_5x_8,\\ 
Q_{18}&=4x_4x_6 - x_7x_8,\\ 
Q_{19}&=-x_5x_6 + x_4x_7,\\ 
Q_{20}&=x_5^2 + 128x_7^2 + 32x_8^2
\end{align*}
$C_3$ has the same number of points as $E$ over $\F_p$ for primes up to $10000$, but \texttt{MAGMA} cannot find a rational point (and therefore cannot directly compute the rank of $C_3$ as an elliptic curve). To overcome this, we build a simpler model, which we do by intersecting $C_3$ with all hyperplanes with coefficients in $\{-1,0,1\}$, and this produces a degree two divisor. This allows us to construct a map from $C$ into the weighted projective space $\P(2,1,1)$, which gives us a model for $C_3$ of the form $y^2 = -(65536x^4 + 128)$. We find that this fails local solubility, and hence $C_3$ has no rational points. Using \texttt{MAGMA}, we find that Jacobian of this curve is $y^2 = 4x^3 - 8388608x$, which is isogenous to $E$. Since $C_3$ has no rational points, the map to its Jacobian has degree higher than $1$, which means that it is impossible to have a degree two map to a rank one elliptic curve. As $X_H$ is not hyperelliptic, it must have a finite number of points. The \texttt{MAGMA} file verifying this can be found here \href{https://github.com/mjcerchia/two-adic-galois-images-quadratic/blob/main/Not%20positive%20rank/32-96-5-f-1.m}{32-96-5-f-1.m}.   
\end{example}

\begin{remark}
    In this case, we were somewhat lucky in that reducing by a single prime allowed us to dismiss two of the three quotient curves. In other similar cases, we need to use different primes for different quotient curves. See, for instance, \href{https://github.com/mjcerchia/two-adic-galois-images-quadratic/blob/main/Not%20positive%20rank/16-96-5-l-1.m}{16-96-5-l-1.m} or \href{https://github.com/mjcerchia/two-adic-galois-images-quadratic/blob/main/Not%20positive%20rank/32-96-5-bf-1.m}{32-96-5-bf-1.m}.
\end{remark}

\newpage

\section{Appendix: Summary of Cases}

\begin{longtable}[H]{|p{2.5cm}|p{2.5cm}|p{2.5cm}|p{2.5cm}|p{2.5cm}|p{2.5cm}|}
\caption{LMFDB labels of genus 0 prime power level modular curves.\label{tab:genus 0}}\\
 \hline
\hline
 \multicolumn{6}{| p{2.5cm} |}{}\\
 
\mc{2.2.0.a.1} & \mc{2.3.0.a.1} & \mc{2.6.0.a.1} & \mc{3.3.0.a.1} & \mc{3.4.0.a.1} & \mc{3.6.0.a.1} \\  \hline
\mc{3.6.0.b.1} & \mc{3.12.0.a.1} & \mc{4.2.0.a.1} & \mc{4.4.0.a.1} & \mc{4.6.0.a.1} & \mc{4.6.0.b.1} \\  \hline

\mc{4.6.0.c.1} & \mc{4.6.0.d.1} & \mc{4.6.0.e.1} & \mc{4.8.0.a.1} &\mc{4.8.0.b.1} & \mc{4.12.0.a.1} \\ \hline
 \mc{4.12.0.d.1} & \mc{4.12.0.e.1} & \mc{4.12.0.f.1} & \mc{4.24.0.a.1} & \mc{4.12.0.b.1} & \mc{4.12.0.c.1} \\ \hline

\mc{4.24.0.b.1} & \mc{4.24.0.c.1} & \mc{5.5.0.a.1} & \mc{5.6.0.a.1} &\mc{5.10.0.a.1} & \mc{5.12.0.a.1} \\ \hline
\mc{5.20.0.a.1} & \mc{5.20.0.b.1} & \mc{5.30.0.a.1} & \mc{5.30.0.b.1}  & \mc{5.12.0.a.2} & \mc{5.15.0.a.1} \\ \hline
\mc{5.60.0.a.1} & \mc{5.60.0.b.1} & \mc{7.8.0.a.1} & \mc{7.21.0.a.1} & \mc{7.24.0.a.1} & \mc{7.24.0.a.2} \\ \hline
\mc{8.2.0.a.1} & \mc{8.2.0.b.1} & \mc{8.6.0.a.1} & \mc{8.6.0.b.1}& \mc{7.24.0.b.1} & \mc{7.28.0.a.1} \\ \hline
\mc{8.6.0.c.1} & \mc{8.6.0.d.1} & \mc{8.6.0.e.1} & \mc{8.6.0.f.1}& \mc{8.8.0.a.1} & \mc{8.8.0.b.1} \\ \hline
\mc{8.12.0.c.1} & \mc{8.12.0.d.1} & \mc{8.12.0.e.1} & \mc{8.12.0.f.1}& \mc{8.12.0.a.1} & \mc{8.12.0.b.1} \\ \hline
\mc{8.12.0.g.1} & \mc{8.12.0.h.1} & \mc{8.12.0.i.1} & \mc{8.12.0.j.1}&\mc{8.12.0.k.1} & \mc{8.12.0.l.1}  \\ \hline
\mc{8.12.0.o.1} & \mc{8.12.0.p.1} & \mc{8.12.0.q.1} & \mc{8.12.0.r.1}& \mc{8.12.0.m.1} & \mc{8.12.0.n.1} \\ \hline

\mc{8.12.0.s.1} & \mc{8.12.0.t.1} & \mc{8.12.0.u.1} & \mc{8.12.0.v.1} &\mc{8.12.0.w.1} & \mc{8.12.0.x.1}\\ \hline
\mc{8.16.0.a.1} & \mc{8.24.0.a.1} & \mc{8.24.0.b.1} & \mc{8.24.0.ba.1} & \mc{8.12.0.y.1} & \mc{8.12.0.z.1} \\ \hline

\mc{8.24.0.ba.2} & \mc{8.24.0.bb.1} & \mc{8.24.0.bb.2} & \mc{8.24.0.bc.1} &\mc{8.24.0.bd.1} & \mc{8.24.0.be.1} \\ \hline
\mc{8.24.0.bh.1} & \mc{8.24.0.bi.1} & \mc{8.24.0.bj.1} & \mc{8.24.0.bk.1} & \mc{8.24.0.bf.1} & \mc{8.24.0.bg.1} \\ \hline
 
\mc{8.24.0.bk.2} & \mc{8.24.0.bl.1} & \mc{8.24.0.bl.2} & \mc{8.24.0.bm.1}&\mc{8.24.0.bn.1} & \mc{8.24.0.bo.1} \\ \hline
\mc{8.24.0.br.1} & \mc{8.24.0.bs.1} & \mc{8.24.0.bt.1} & \mc{8.24.0.c.1} & \mc{8.24.0.bp.1} & \mc{8.24.0.bq.1} \\ \hline
 
\mc{8.24.0.d.1} & \mc{8.24.0.d.2} & \mc{8.24.0.e.1} & \mc{8.24.0.e.2}&\mc{8.24.0.f.1} & \mc{8.24.0.g.1} \\ \hline
\mc{8.24.0.j.1} & \mc{8.24.0.k.1} & \mc{8.24.0.l.1} & \mc{8.24.0.m.1} & \mc{8.24.0.h.1} & \mc{8.24.0.i.1} \\ \hline
 
\mc{8.24.0.n.1} & \mc{8.24.0.o.1} & \mc{8.24.0.p.1} & \mc{8.24.0.q.1}&\mc{8.24.0.r.1} & \mc{8.24.0.s.1} \\ \hline
\mc{8.24.0.v.1} & \mc{8.24.0.w.1} & \mc{8.24.0.x.1} & \mc{8.24.0.y.1} & \mc{8.24.0.t.1} & \mc{8.24.0.u.1} \\ \hline

\mc{8.24.0.z.1} & \mc{8.48.0.a.1} & \mc{8.48.0.b.1} & \mc{8.48.0.b.2}&\mc{8.48.0.c.1} & \mc{8.48.0.d.1}  \\ \hline
\mc{8.48.0.f.1} & \mc{8.48.0.g.1} & \mc{8.48.0.h.1} & \mc{8.48.0.h.2}& \mc{8.48.0.e.1} & \mc{8.48.0.e.2} \\ \hline

\mc{8.48.0.i.1} & \mc{8.48.0.j.1} & \mc{8.48.0.j.2} & \mc{8.48.0.k.1} & \mc{8.48.0.k.2} & \mc{8.48.0.l.1} \\ \hline
\mc{8.48.0.m.2} & \mc{8.48.0.n.1} & \mc{8.48.0.n.2} & \mc{8.48.0.o.1} & \mc{8.48.0.l.2} & \mc{8.48.0.m.1} \\ \hline
 
\mc{8.48.0.p.1} & \mc{8.48.0.q.1} & \mc{8.48.0.q.2} & \mc{9.9.0.a.1}& \mc{9.12.0.a.1} & \mc{9.12.0.b.1}\\ \hline
\mc{9.18.0.c.1} & \mc{9.18.0.d.1} & \mc{9.27.0.a.1} & \mc{9.27.0.b.1} & \mc{9.18.0.a.1} & \mc{9.18.0.b.1} \\ \hline
 
\mc{9.36.0.a.1} & \mc{9.36.0.b.1} & \mc{9.36.0.c.1} & \mc{9.36.0.d.1}&\mc{9.36.0.d.2} & \mc{9.36.0.e.1}  \\ \hline
\mc{9.36.0.g.1} & \mc{13.14.0.a.1} & \mc{13.28.0.a.1} & \mc{13.28.0.a.2}& \mc{9.36.0.f.1} & \mc{9.36.0.f.2} \\ \hline
 
\mc{13.42.0.a.1} & \mc{13.42.0.a.2} & \mc{13.42.0.b.1} & \mc{16.16.0.a.1} &\mc{16.16.0.b.1} & \mc{16.24.0.a.1}\\ \hline
\mc{16.24.0.d.1} & \mc{16.24.0.e.1} & \mc{16.24.0.e.2} & \mc{16.24.0.f.1} & \mc{16.24.0.b.1} & \mc{16.24.0.c.1} \\ \hline
 
\mc{16.24.0.f.2} & \mc{16.24.0.g.1} & \mc{16.24.0.h.1} & \mc{16.24.0.i.1}&\mc{16.24.0.j.1} & \mc{16.24.0.k.1} \\ \hline
\mc{16.24.0.l.2} & \mc{16.24.0.m.1} & \mc{16.24.0.m.2} & \mc{16.24.0.n.1} & \mc{16.24.0.k.2} & \mc{16.24.0.l.1} \\ \hline
 
\mc{16.24.0.n.2} & \mc{16.24.0.o.1} & \mc{16.24.0.o.2} & \mc{16.24.0.p.1} &\mc{16.24.0.p.2} & \mc{16.48.0.a.1}\\ \hline
\mc{16.48.0.ba.2} & \mc{16.48.0.bb.1} & \mc{16.48.0.bb.2} & \mc{16.48.0.c.1} & \mc{16.48.0.b.1} & \mc{16.48.0.ba.1} \\ \hline

\mc{16.48.0.c.2} & \mc{16.48.0.d.1} & \mc{16.48.0.d.2} & \mc{16.48.0.e.1} &\mc{16.48.0.f.1} & \mc{16.48.0.g.1}\\ \hline
\mc{16.48.0.i.1} & \mc{16.48.0.j.1} & \mc{16.48.0.k.1} & \mc{16.48.0.l.1}  & \mc{16.48.0.h.1} & \mc{16.48.0.h.2} \\ \hline

\mc{16.48.0.l.2} & \mc{16.48.0.m.1} & \mc{16.48.0.m.2} & \mc{16.48.0.n.1} &\mc{16.48.0.o.1} & \mc{16.48.0.p.1}\\ \hline
\mc{16.48.0.s.1} & \mc{16.48.0.t.1} & \mc{16.48.0.t.2} & \mc{16.48.0.u.1} & \mc{16.48.0.q.1} & \mc{16.48.0.r.1} \\ \hline
 
\mc{16.48.0.u.2} & \mc{16.48.0.v.1} & \mc{16.48.0.v.2} & \mc{16.48.0.w.1}&\mc{16.48.0.w.2} & \mc{16.48.0.x.1}  \\ \hline
\mc{16.48.0.y.2} & \mc{16.48.0.z.1} & \mc{16.48.0.z.2} & \mc{25.30.0.a.1}& \mc{16.48.0.x.2} & \mc{16.48.0.y.1} \\ \hline
 
\mc{25.60.0.a.1} & \mc{25.60.0.a.2} & \mc{27.36.0.a.1} & \mc{32.32.0.a.1}&\mc{32.32.0.b.1} & \mc{32.48.0.a.1} \\ \hline
\mc{32.48.0.d.1} & \mc{32.48.0.e.1} & \mc{32.48.0.e.2} & \mc{32.48.0.f.1} & \mc{32.48.0.b.1} & \mc{32.48.0.c.1} \\ \hline
 
\mc{32.48.0.f.2} & & & & \\ \hline

\end{longtable}

\begin{longtable}[H]{|p{2.5cm}|p{2.5cm}|p{2.5cm}|p{2.5cm}|p{2.5cm}|p{2.5cm}|}
\caption{LMFDB labels of genus 1 prime power level modular curves that have infinitely many quadratic points.\label{tab:genus 1}}\\
 \hline
\hline
 \multicolumn{6}{| p{2.5cm} |}{}\\

\mc{7.42.1.b.1} & \mc{7.56.1.a.1} & \mc{7.56.1.b.1} & \mc{7.84.1.a.1} & \mc{8.12.1.a.1} & \mc{8.12.1.b.1} \\ \hline
\mc{8.12.1.c.1} & \mc{8.12.1.d.1} & \mc{8.24.1.a.1} & \mc{8.24.1.b.1} &
\mc{8.24.1.ba.1} & \mc{8.24.1.bb.1} \\ \hline
\mc{8.24.1.bd.1} & \mc{8.24.1.be.1} & \mc{8.24.1.bf.1} & \mc{8.24.1.c.1} &
\mc{8.24.1.d.1} & \mc{8.24.1.f.1} \\ \hline
\mc{8.24.1.l.1} & \mc{8.24.1.m.1} & \mc{8.24.1.n.1} & \mc{8.24.1.o.1} &
\mc{8.24.1.r.1} & \mc{8.24.1.s.1} \\ \hline
\mc{8.24.1.t.1} & \mc{8.24.1.u.1} & \mc{8.24.1.w.1} & \mc{8.24.1.x.1} &
\mc{8.24.1.y.1} & \mc{8.48.1.bb.1} \\ \hline
\mc{8.48.1.bc.1} & \mc{8.48.1.bn.1} & \mc{8.48.1.bp.1} & \mc{8.48.1.bs.1} &
\mc{8.48.1.bv.1} & \mc{8.48.1.c.1} \\ \hline
\mc{8.48.1.g.1} & \mc{8.48.1.g.2} & \mc{8.48.1.h.1} & \mc{8.48.1.h.2} &
\mc{8.48.1.i.1} & \mc{8.48.1.i.2} \\ \hline
\mc{8.48.1.k.1} & \mc{8.48.1.k.2} & \mc{8.48.1.m.1} & \mc{8.48.1.m.2} &
\mc{8.48.1.n.1} & \mc{8.48.1.p.1} \\ \hline
\mc{8.48.1.t.1} & \mc{8.96.1.d.1} & \mc{8.96.1.e.2} & \mc{8.96.1.f.2} &
\mc{8.96.1.g.1} & \mc{8.96.1.g.2} \\ \hline
\mc{8.96.1.j.1} & \mc{8.96.1.l.1} & \mc{8.96.1.m.1} & \mc{9.12.1.a.1} &
\mc{9.36.1.a.1} & \mc{9.36.1.b.1} \\ \hline
\mc{9.36.1.b.2} & \mc{9.36.1.c.1} & \mc{9.54.1.a.1} & \mc{9.108.1.a.1} &
\mc{9.108.1.a.2} & \mc{9.108.1.b.1} \\ \hline
\mc{11.12.1.a.1} & \mc{11.55.1.a.1} & \mc{11.55.1.b.1} & \mc{11.60.1.a.1} &
\mc{11.60.1.a.2} & \mc{11.60.1.b.1} \\ \hline
\mc{11.60.1.b.2} & \mc{11.60.1.c.1} & \mc{16.24.1.a.1} & \mc{16.24.1.b.1} &
\mc{16.24.1.c.1} & \mc{16.24.1.d.1} \\ \hline
\mc{16.24.1.e.1} & \mc{16.24.1.e.2} & \mc{16.24.1.f.1} & \mc{16.24.1.f.2} &
\mc{16.24.1.g.1} & \mc{16.24.1.g.2} \\ \hline
\mc{16.24.1.h.1} & \mc{16.24.1.h.2} & \mc{16.24.1.i.1} & \mc{16.24.1.j.1} &
\mc{16.24.1.k.1} & \mc{16.24.1.l.1} \\ \hline
\mc{16.24.1.m.1} & \mc{16.24.1.m.2} & \mc{16.24.1.n.1} & \mc{16.24.1.n.2} &
\mc{16.48.1.a.1} & \mc{16.48.1.a.2} \\ \hline
\mc{16.48.1.b.1} & \mc{16.48.1.b.2} & \mc{16.48.1.ba.1} & \mc{16.48.1.bd.1} &
\mc{16.48.1.bf.1} & \mc{16.48.1.bg.1} \\ \hline
\mc{16.48.1.bl.1} & \mc{16.48.1.bm.1} & \mc{16.48.1.bn.1} & \mc{16.48.1.bo.1} &
\mc{16.48.1.bp.1} & \mc{16.48.1.bq.1} \\ \hline
\mc{16.48.1.br.1} & \mc{16.48.1.bs.1} & \mc{16.48.1.bt.1} & \mc{16.48.1.bu.1} &
\mc{16.48.1.bv.1} & \mc{16.48.1.ca.1} \\ \hline
\mc{16.48.1.cc.1} & \mc{16.48.1.cd.1} & \mc{16.48.1.cf.1} & \mc{16.48.1.cg.1} &
\mc{16.48.1.ch.1} & \mc{16.48.1.cj.1} \\ \hline
\mc{16.48.1.cj.2} & \mc{16.48.1.cl.1} & \mc{16.48.1.cl.2} & \mc{16.48.1.cn.1} &
\mc{16.48.1.co.1} & \mc{16.48.1.cp.1} \\ \hline
\mc{16.48.1.cr.1} & \mc{16.48.1.cs.1} & \mc{16.48.1.ct.1} & \mc{16.48.1.cv.1} &
\mc{16.48.1.cv.2} & \mc{16.48.1.cx.1} \\ \hline
\mc{16.48.1.cx.2} & \mc{16.48.1.d.1} & \mc{16.48.1.db.1} & \mc{16.48.1.dc.1} &
\mc{16.48.1.de.1} & \mc{16.48.1.df.1} \\ \hline
\mc{16.48.1.g.1} & \mc{16.48.1.h.1} & \mc{16.48.1.i.1} & \mc{16.48.1.k.1} &
\mc{16.48.1.p.1} & \mc{16.48.1.q.1} \\ \hline
\mc{16.48.1.q.2} & \mc{16.48.1.r.1} & \mc{16.48.1.r.2} & \mc{16.48.1.t.1} &
\mc{16.48.1.t.2} & \mc{16.48.1.u.1} \\ \hline
\mc{16.48.1.u.2} & \mc{16.48.1.v.1} & \mc{16.48.1.v.2} & \mc{16.48.1.x.1} &
\mc{16.48.1.x.2} & \mc{16.48.1.y.1} \\ \hline
\mc{16.96.1.b.1} & \mc{16.96.1.b.2} & \mc{16.96.1.c.2} & \mc{16.96.1.e.2} &
\mc{16.96.1.f.1} & \mc{16.96.1.f.2} \\ \hline
\mc{16.96.1.g.1} & \mc{16.96.1.l.2} & \mc{16.96.1.m.1} & \mc{16.96.1.m.2} &
\mc{16.96.1.q.1} & \mc{17.18.1.a.1} \\ \hline
\mc{17.36.1.a.1} & \mc{17.36.1.a.2} & \mc{17.72.1.a.1} & \mc{17.72.1.a.2} &
\mc{17.72.1.b.1} & \mc{17.72.1.b.2} \\ \hline
\mc{19.20.1.a.1} & \mc{19.60.1.a.1} & \mc{19.60.1.a.2} & \mc{19.60.1.b.1} &
\mc{27.36.1.a.1} & \mc{27.36.1.b.1} \\ \hline
\mc{27.108.1.a.1} & \mc{27.108.1.a.2} & \mc{32.48.1.a.1} & \mc{32.48.1.a.2} &
\mc{32.48.1.b.1} & \mc{32.48.1.b.2} \\ \hline
\mc{32.96.1.a.1} & \mc{32.96.1.a.2} & \mc{32.96.1.b.1} & \mc{32.96.1.d.1} &
\mc{32.96.1.d.2} & \mc{32.96.1.e.1} \\ \hline
\mc{32.96.1.f.1} & \mc{32.96.1.f.2} & \mc{32.96.1.h.1} &
\mc{49.56.1.a.1} & \mc{27.108.1.b.1} & \mc{7.42.1.a.1}\\ \hline
\mc{8.24.1.bc.1} & \mc{8.24.1.e.1} & \mc{8.24.1.g.1} &
\mc{8.24.1.h.1} & \mc{8.24.1.i.1} & \mc{8.24.1.j.1} \\ \hline
\mc{8.24.1.k.1} & \mc{8.24.1.p.1} & \mc{8.24.1.q.1} & \mc{8.24.1.v.1} & \mc{8.24.1.z.1} & \mc{8.32.1.a.1} \\ \hline
\mc{8.32.1.b.1} & \mc{8.32.1.c.1} & \mc{8.32.1.d.1} & \mc{8.48.1.a.1} & \mc{8.48.1.b.1} & \mc{8.48.1.ba.1} \\ \hline
\mc{8.48.1.bd.1} & \mc{8.48.1.be.1} & \mc{8.48.1.bf.1} & \mc{8.48.1.bg.1} & \mc{8.48.1.bh.1} & \mc{8.48.1.bh.2} \\ \hline
\mc{8.48.1.bj.1} & \mc{8.48.1.bk.1} & \mc{8.48.1.bl.1} & \mc{8.48.1.bm.1} & \mc{8.48.1.bo.1} & \mc{8.48.1.bq.1} \\ \hline \mc{8.48.1.br.1} & \mc{32.96.1.g.2} & \mc{32.96.1.h.2} & \mc{32.96.1.g.1} &  \mc{8.48.1.bt.1} & \mc{8.48.1.bu.1} \\ \hline
\mc{8.48.1.d.1} & \mc{8.48.1.e.1} & \mc{8.48.1.e.2} & \mc{8.48.1.f.1} & \mc{8.48.1.j.1} & \mc{8.48.1.l.1} \\ \hline
\mc{8.48.1.o.1} & \mc{8.48.1.q.1} &  \mc{8.48.1.r.1} & \mc{8.48.1.s.1} & \mc{8.48.1.s.2} & \mc{8.48.1.u.1} \\ \hline
\mc{8.48.1.v.1} & \mc{8.48.1.w.1} & \mc{8.48.1.x.1} & \mc{8.48.1.y.1} & \mc{8.48.1.z.1} & \mc{8.96.1.a.1} \\ \hline
\mc{8.96.1.a.2} & \mc{8.96.1.b.1} & \mc{8.96.1.b.2} & \mc{8.96.1.c.1} & \mc{8.96.1.c.2} & \mc{8.96.1.e.1} \\ \hline
\mc{32.96.1.e.2} & \mc{32.96.1.c.2} & \mc{32.96.1.c.1} & \mc{32.96.1.b.2} & \mc{8.96.1.f.1} & \mc{8.96.1.h.1} \\ \hline
\mc{8.96.1.h.2} & \mc{8.96.1.i.1} & \mc{8.96.1.i.2} & \mc{8.96.1.j.2} & \mc{8.96.1.k.1} & \mc{8.96.1.l.2} \\ \hline
\mc{8.96.1.n.1} & \mc{16.96.1.q.2} & \mc{16.96.1.r.1} & \mc{16.96.1.r.2} & \mc{16.96.1.s.1} & \mc{16.96.1.s.2} \\ \hline
\mc{16.48.1.bb.1} & \mc{16.48.1.bc.1} & \mc{16.48.1.be.1} & \mc{16.48.1.bh.1} & \mc{16.48.1.bi.1} & \mc{16.48.1.bj.1} \\ \hline 
\mc{16.48.1.bk.1} & \mc{16.48.1.bw.1} & \mc{16.48.1.bx.1} & \mc{16.48.1.by.1} & \mc{16.48.1.bz.1} & \mc{16.48.1.c.1} \\ \hline \mc{16.48.1.cb.1} & \mc{16.48.1.ce.1} & \mc{16.48.1.ci.1} & \mc{16.48.1.ci.2} & \mc{16.48.1.ck.1} & \mc{16.48.1.ck.2} \\ \hline 
\mc{16.48.1.cm.1} & \mc{16.48.1.cq.1} & \mc{16.48.1.cu.1} & \mc{16.48.1.cu.2} & \mc{16.48.1.cw.1} & \mc{16.48.1.cw.2} \\ \hline 
\mc{16.48.1.cy.1} & \mc{16.48.1.cz.1} & \mc{16.48.1.da.1} & \mc{16.48.1.dd.1} & \mc{16.48.1.e.1} & \mc{16.48.1.f.1} \\ \hline \mc{16.48.1.j.1} & \mc{16.48.1.m.1} & \mc{16.48.1.n.1} & \mc{16.48.1.o.1} & \mc{16.48.1.s.1} & \mc{16.48.1.s.2} \\ \hline
\mc{16.48.1.w.1} & \mc{16.48.1.w.2} & \mc{16.48.1.z.1} & \mc{16.96.1.a.1} & \mc{16.96.1.a.2} & \mc{16.96.1.c.1} \\ \hline
\mc{16.96.1.d.1} & \mc{16.96.1.d.2} & \mc{16.96.1.e.1} & \mc{16.96.1.g.2} & \mc{16.96.1.h.1} & \mc{16.96.1.h.2} \\ \hline
\mc{16.96.1.i.1} & \mc{16.96.1.i.2} & \mc{16.96.1.j.1} & \mc{16.96.1.j.2} & \mc{16.96.1.l.1} & \mc{16.96.1.n.1} \\ \hline
\mc{16.96.1.n.2} & \mc{16.96.1.o.1} & \mc{16.96.1.o.2} & \mc{16.96.1.p.1} & \mc{16.96.1.p.2} & \mc{16.48.1.l.1}  \\ \hline

\end{longtable}

\begin{longtable}[H]{|p{2.5 cm}|p{2.5 cm}|p{2.5 cm}|p{2.5 cm}|p{2.5 cm}|p{2.5 cm}|}
\caption{LMFDB labels of hyperelliptic prime power level modular curves.\label{tab:hyp}}\\
 \hline
\hline
 \multicolumn{6}{| p{2.5 cm} |}{}\\

\mc{8.48.2.a.1} & \mc{16.48.2.a.1} & \mc{16.48.2.b.1} & \mc{16.48.2.e.1} & \mc{16.48.2.f.1} & \mc{16.48.2.l.1} \\ \hline
\mc{16.48.2.l.2} & \mc{16.48.2.m.1} & \mc{16.48.2.m.2} & \mc{16.48.2.ba.1} & \mc{16.48.2.bb.1} & \mc{16.48.2.bc.1} \\ \hline
\mc{16.48.2.bd.1} & \mc{16.48.2.be.1} & \mc{16.48.2.bf.1} & \mc{16.48.2.bg.1} & \mc{16.48.2.bh.1} & \mc{16.48.2.bi.1} \\ \hline
\mc{16.48.2.bj.1} & \mc{16.48.2.bk.1} & \mc{16.48.2.bl.1} & \mc{16.48.2.bm.1} & \mc{16.48.2.bn.1} & \mc{16.48.2.y.1} \\ \hline
\mc{16.48.2.z.1} & \mc{9.36.2.a.1} & \mc{9.54.2.a.1} & \mc{9.54.2.b.1} & \mc{11.66.2.a.1} & \mc{13.84.2.a.1} \\ \hline
\mc{13.84.2.a.2} & \mc{13.84.2.b.1} & \mc{13.84.2.b.2} & \mc{13.84.2.c.1} & \mc{13.84.2.c.2} & \mc{16.24.2.a.1} \\ \hline
\mc{16.24.2.a.2} & \mc{16.24.2.b.1} & \mc{16.24.2.b.2} & \mc{16.24.2.c.1} & \mc{16.24.2.d.1} & \mc{16.24.2.e.1} \\ \hline
\mc{16.24.2.f.1} & \mc{16.48.2.c.1} & \mc{16.48.2.c.2} & \mc{16.48.2.d.1} & \mc{16.48.2.d.2} & \mc{16.48.2.g.1} \\ \hline
\mc{16.48.2.g.2} & \mc{16.48.2.h.1} & \mc{16.48.2.i.1} & \mc{16.48.2.j.1} & \mc{16.48.2.k.1} & \mc{16.48.2.n.1} \\ \hline
\mc{16.48.2.o.1} & \mc{16.48.2.p.1} & \mc{16.48.2.bo.1} & \mc{16.48.2.bp.1} & \mc{16.48.2.bq.1} & \mc{16.48.2.br.1} \\ \hline
\mc{16.48.2.bs.1} & \mc{16.48.2.bt.1} & \mc{16.48.2.bu.1} & \mc{16.48.2.bv.1} & \mc{16.48.2.q.1} & \mc{16.48.2.r.1} \\ \hline
\mc{16.48.2.s.1} & \mc{16.48.2.t.1} & \mc{16.48.2.u.1} & \mc{16.48.2.v.1} & \mc{16.48.2.w.1} & \mc{16.48.2.x.1} \\ \hline
\mc{16.48.2.bw.1} & \mc{16.48.2.bx.1} & \mc{16.64.2.a.1} & \mc{16.96.2.a.1} & \mc{16.96.2.b.1} & \mc{16.96.2.c.1} \\ \hline
\mc{16.96.2.d.1} & \mc{32.96.2.g.1} & \mc{32.96.2.h.1} & \mc{16.96.2.i.1} & \mc{16.96.2.i.2} & \mc{16.96.2.j.1} \\ \hline
\mc{16.96.2.j.2} & \mc{16.96.2.k.1} & \mc{16.96.2.k.2} & \mc{16.96.2.l.1} & \mc{16.96.2.l.2} & \mc{16.96.2.e.1} \\ \hline
\mc{16.96.2.e.2} & \mc{16.96.2.f.1} & \mc{16.96.2.f.2} & \mc{16.96.2.g.1} & \mc{16.96.2.g.2} & \mc{16.96.2.h.1} \\ \hline
\mc{16.96.2.h.2} & \mc{23.24.2.a.1} & \mc{25.50.2.a.1} & \mc{25.75.2.a.1} & \mc{27.36.2.a.1} & \mc{27.36.2.b.1} \\ \hline
\mc{29.30.2.a.1} & \mc{31.32.2.a.1} & \mc{32.48.2.a.1} & \mc{32.48.2.a.2} & \mc{32.48.2.b.1} & \mc{32.48.2.b.2} \\ \hline
\mc{32.96.2.a.1} & \mc{32.96.2.b.1} & \mc{32.96.2.c.1} & \mc{32.96.2.d.1} & \mc{32.96.2.e.1} & \mc{32.96.2.f.1} \\ \hline
\mc{32.96.2.i.1} & \mc{32.96.2.i.2} & \mc{32.96.2.j.1} & \mc{32.96.2.j.2} & \mc{32.96.2.k.1} & \mc{32.96.2.k.2} \\ \hline 
\mc{32.96.2.l.1} & \mc{32.96.2.l.2} & \mc{37.38.2.a.1} & \mc{8.96.3.f.1} & \mc{8.96.3.f.2} & \mc{8.96.3.h.1} \\ \hline
\mc{8.96.3.h.2} & \mc{8.96.3.i.1} & \mc{8.96.3.j.1} & \mc{16.96.3.b.1} & \mc{16.96.3.bg.1} & \mc{16.96.3.bh.1} \\ \hline
\mc{16.96.3.bj.1} & \mc{16.96.3.bk.1} & \mc{16.96.3.bq.1} & \mc{16.96.3.bs.1} & \mc{16.96.3.bt.1} & \mc{16.96.3.bv.1} \\ \hline
\mc{16.96.3.c.1} & \mc{16.96.3.cm.1} & \mc{16.96.3.cm.2} & \mc{16.96.3.cr.1} & \mc{16.96.3.cr.2} & \mc{16.96.3.cx.1} \\ \hline
\mc{16.96.3.da.1} & \mc{16.96.3.df.1} & \mc{16.96.3.df.2} & \mc{16.96.3.dh.1} & \mc{16.96.3.dh.2} & \mc{16.96.3.di.1} \\ \hline
\mc{16.96.3.dk.1} & \mc{16.96.3.h.1} & \mc{16.96.3.i.1} & \mc{16.48.3.a.1} & \mc{16.48.3.a.2} & \mc{16.48.3.bc.1} \\ \hline
\mc{16.48.3.bc.2} & \mc{16.48.3.bf.1}& \mc{16.48.3.bf.2}& \mc{16.48.3.e.1}& \mc{16.48.3.e.2}& \mc{16.48.3.i.1} \\ \hline
\mc{16.48.3.i.2} & \mc{16.48.3.m.1}&\mc{16.48.3.m.2}&\mc{16.48.3.t.1} & \mc{16.48.3.t.2}& \mc{16.48.3.w.1} \\ \hline
\mc{16.48.3.w.2} & \mc{16.48.3.bm.1}&\mc{16.48.3.bn.1}&\mc{16.48.3.bo.1} & \mc{16.48.3.bq.1}& \mc{16.48.3.br.1} \\ \hline
\mc{16.48.3.bs.1}& \mc{16.48.3.bx.1}&\mc{16.48.3.bz.1}&\mc{16.48.3.ca.1}&\mc{16.48.3.cb.1}&\mc{16.48.3.cd.1} \\ \hline
\mc{16.48.3.ce.1}& \mc{16.48.3.ba.1}& \mc{16.48.3.bb.1}& \mc{16.48.3.c.1}& \mc{16.48.3.c.2} & \mc{16.48.3.f.1} \\ \hline
\mc{16.48.3.f.2} & \mc{16.48.3.h.1}&\mc{16.48.3.k.1}&\mc{16.48.3.o.1}&\mc{16.48.3.q.1}&\mc{16.48.3.r.1}\\ \hline
\mc{16.48.3.u.1} & \mc{16.48.3.y.1}&\mc{16.48.3.cf.1}&\mc{16.48.3.cg.1}&\mc{16.48.3.bi.1}&\mc{16.48.3.bi.2} \\ \hline
\mc{16.48.3.bk.1} & \mc{16.48.3.bk.2}&\mc{16.48.3.bu.1}& \mc{16.48.3.bu.2}& \mc{16.48.3.bw.1}&\mc{16.48.3.bw.2}\\ \hline
\mc{16.96.3.a.1}& \mc{16.96.3.a.2}&\mc{16.96.3.bm.1}&\mc{16.96.3.bn.1}&\mc{16.96.3.bw.1}&\mc{16.96.3.bx.1} \\ \hline
\mc{16.96.3.ca.1}& \mc{16.96.3.cb.1}&\mc{16.96.3.ce.1}&\mc{16.96.3.cf.1}&\mc{16.96.3.ci.1}&\mc{16.96.3.cj.1} \\ \hline
\mc{16.96.3.ck.1} & \mc{16.96.3.cl.1}& \mc{16.96.3.dp.1}& \mc{16.96.3.dq.1}&\mc{16.96.3.dt.1}&\mc{16.96.3.du.1} \\ \hline
\mc{16.96.3.dx.1} & \mc{16.96.3.dy.1}& \mc{16.96.3.e.1} & \mc{16.96.3.e.2}&\mc{16.96.3.p.1} & \mc{16.96.3.p.2} \\ \hline
\mc{16.96.3.r.1} & \mc{16.96.3.r.2} & \mc{16.96.3.t.1} & \mc{16.96.3.t.2}&\mc{16.96.3.y.1} & \mc{16.96.3.y.2} \\ \hline
\mc{16.96.3.ba.1}& \mc{16.96.3.ba.2} & \mc{16.96.3.cc.1}&\mc{16.96.3.cg.1} & \mc{16.96.3.cp.1}&\mc{16.96.3.cp.2} \\ \hline
\mc{16.96.3.cs.1} & \mc{16.96.3.cs.2} & \mc{16.96.3.dr.1}&\mc{16.96.3.dv.1} & \mc{16.96.3.v.1}&\mc{16.96.3.v.2} \\ \hline
\mc{16.96.3.w.1} & \mc{16.96.3.w.2} & \mc{16.96.3.z.1} & \mc{16.96.3.z.2} & \mc{16.96.3.ey.1} & \mc{16.96.3.ey.2} \\ \hline
\mc{16.96.3.fa.1} & \mc{16.96.3.fa.2} & \mc{32.48.3.e.1} & \mc{32.48.3.e.2} & \mc{32.48.3.f.1} & \mc{32.48.3.f.2} \\ \hline
\mc{32.48.3.c.1} & \mc{32.48.3.c.2} & \mc{32.48.3.d.1} & \mc{32.48.3.d.2} & \mc{32.96.3.b.1} & \mc{32.96.3.b.2} \\ \hline
\mc{32.96.3.c.1} & \mc{32.96.3.c.2} & \mc{32.96.3.f.1} & \mc{32.96.3.f.2} & \mc{32.96.3.h.1} & \mc{32.96.3.h.2} \\ \hline
\mc{32.96.3.i.1} & \mc{32.96.3.i.2} & \mc{32.96.3.l.1} & \mc{32.96.3.l.2} & \mc{32.96.3.n.1} & \mc{32.96.3.n.2} \\ \hline
\mc{32.96.3.q.1} & \mc{32.96.3.q.2} & \mc{32.96.3.r.1} & \mc{32.96.3.r.2} & \mc{32.96.3.s.1} & \mc{32.96.3.s.2} \\ \hline
\mc{32.96.3.u.1} & \mc{32.96.3.u.2} & \mc{32.96.3.v.1} & \mc{32.96.3.v.2} & \mc{32.96.3.w.1} & \mc{32.96.3.w.2} \\ \hline
\mc{32.96.3.y.1} & \mc{32.96.3.y.2} & \mc{41.42.3.a.1} & \mc{64.96.3.c.1}& \mc{64.96.3.c.2} & \mc{64.96.3.d.1} \\ \hline \mc{64.96.3.d.2} & \mc{47.48.4.a.1} & \mc{59.60.5.a.1} & \mc{71.72.6.a.1} & \mc{32.96.7.a.1} & \mc{32.96.7.g.1} \\ \hline
\mc{32.96.7.r.1} & \mc{32.96.7.s.1} & \mc{64.96.7.g.1} & \mc{64.96.7.g.2} & \mc{64.96.7.h.1} & \mc{64.96.7.h.2} \\ \hline
\end{longtable}

\begin{longtable}[H]{|p{2.5cm}|p{2.5cm}|p{2.5cm}|p{2.5cm}|}
\caption{Modular curves which map to a positive rank elliptic curve in LMFDB database.\label{tab:LMFDB deductions}}\\
 \hline
 
LMFDB label of curve & \multicolumn{1}{l|}{\begin{tabular}[c]{@{}l@{}}LMFDB label of positive rank\\ elliptic curve that it maps to\end{tabular}} & LMFDB label of curve & \multicolumn{1}{l|}{\begin{tabular}[c]{@{}l@{}}LMFDB label of positive rank\\ elliptic curve that it maps to\end{tabular}} \\
 \hline
 \endfirsthead

 \hline
 \multicolumn{4}{|c|}{Continuation of Table \ref{tab:LMFDB deductions}}\\
 \hline
 LMFDB label of curve & \multicolumn{1}{l|}{\begin{tabular}[c]{@{}l@{}}LMFDB label of positive rank\\ elliptic curve that it maps to\end{tabular}} & LMFDB label of curve & \multicolumn{1}{l|}{\begin{tabular}[c]{@{}l@{}}LMFDB label of positive rank\\ elliptic curve that it maps to\end{tabular}} \\
 \hline
 \endhead

 \hline
 \endfoot

 \hline
 \multicolumn{4}{| p{2.5cm} |}{}\\

\mc{16.48.3.b.1}   &  \mc{16.24.1.n.1}  &   \mc{16.48.3.bd.2}     &   \mc{16.24.1.n.1}           \\  \hline
   \mc{16.48.3.b.2}  &   \mc{16.24.1.n.2}  &\mc{16.96.5.bd.2} &  \mc{16.48.1.cx.2}        \\  \hline
 \mc{16.48.3.bd.1}    &  \mc{16.24.1.n.2}    &\mc{16.96.5.bd.3} & \mc{16.48.1.cx.1}          \\  \hline

\mc{16.48.3.bg.1}     &   \mc{16.24.1.n.1}  &\mc{16.96.5.ca.1} & \mc{16.48.1.cx.1}         \\  \hline
\mc{16.48.3.n.1}     &    \mc{16.24.1.n.1} &\mc{16.96.5.ca.2} &\mc{16.48.1.cx.2}           \\  \hline
\mc{16.96.3.ep.1} & \mc{16.48.1.bl.1} & \mc{16.96.5.x.2} & \mc{16.48.1.ct.1} \\  \hline
\mc{16.96.3.es.1} & \mc{16.48.1.bn.1} & \mc{16.96.5.i.1} & \mc{16.48.1.df.1} \\  \hline
\mc{16.96.3.et.1} & \mc{16.48.1.cs.1} &\mc{16.96.5.ce.1} & \mc{16.48.1.cx.2}\\ \hline
\mc{16.96.3.eu.1} & \mc{16.48.1.bn.1} & \mc{16.96.5.ce.2}& \mc{16.48.1.cx.1}\\ \hline
\mc{16.96.3.ev.1} & \mc{16.48.1.bl.1}  &  \mc{16.96.5.i.2} & \mc{16.48.1.cr.1} \\ \hline
\mc{16.96.3.ew.1} & \mc{16.48.1.cs.1}& \mc{16.96.5.d.2} & \mc{16.48.1.cs.1} \\ \hline
\mc{16.96.5.d.1} & \mc{16.48.1.dc.1}  &\mc{16.96.5.w.1} & \mc{16.48.1.cx.1}\\ \hline
\mc{16.96.3.fd.1} & \mc{16.48.1.bv.1}& \mc{16.96.5.w.2} & \mc{16.48.1.cx.2}\\ \hline
\mc{16.96.3.ff.1} & \mc{16.48.1.bv.1}&\mc{16.96.5.da.1} & \mc{16.48.1.bn.1}\\ \hline
\mc{16.96.3.fg.1} & \mc{16.48.1.dc.1} &\mc{16.96.5.db.1} & \mc{16.48.1.bl.1}\\ \hline
\mc{16.96.3.fh.1} & \mc{16.48.1.dc.1} & \mc{16.96.5.dd.1}& \mc{16.48.1.bl.1}\\ \hline
\mc{16.96.3.fi.1} & \mc{16.48.1.dc.1}& \mc{16.96.5.dg.1} & \mc{16.48.1.bn.1}\\ \hline
\mc{16.96.3.eh.1} & \mc{16.48.1.de.1}& \mc{16.96.5.dh.1}& \mc{16.48.1.bv.1} \\ \hline
\mc{16.96.3.ei.1} & \mc{16.48.1.df.1}&  \mc{16.96.5.cq.1} & \mc{16.48.1.dc.1}  \\ \hline
\mc{16.96.3.ej.1} & \mc{16.48.1.de.1}& \mc{16.96.5.cp.1} & \mc{16.48.1.de.1}  \\ \hline
\mc{16.96.3.ek.1} & \mc{16.48.1.df.1}&\mc{16.96.5.dn.1} & \mc{16.48.1.bv.1}\\ \hline
 
\mc{16.96.3.fk.1} & \mc{16.48.1.cf.1}& \mc{16.96.5.dq.1}& \mc{16.48.1.cf.1}\\ \hline
\mc{16.96.3.fl.1} & \mc{16.48.1.cg.1}& \mc{16.96.5.dr.1} & \mc{16.48.1.cg.1}\\ \hline
\mc{16.96.3.fm.1} & \mc{16.48.1.ch.1}& \mc{16.96.5.ds.1}& \mc{16.48.1.ch.1}\\ \hline
\mc{16.96.3.fn.1} & \mc{16.48.1.de.1}& \mc{16.96.4.k.1} & \mc{16.48.1.ca.1} \\ \hline
\mc{16.96.3.fo.1} & \mc{16.48.1.cf.1}& \mc{16.96.5.du.1}&\mc{16.48.1.cf.1}\\ \hline
\mc{16.96.3.fp.1} & \mc{16.48.1.cg.1}& \mc{16.96.5.dv.1}& \mc{16.48.1.cg.1}\\ \hline
\mc{16.96.3.fq.1} & \mc{16.48.1.ch.1}& \mc{16.96.5.dw.1}& \mc{16.48.1.ch.1}\\ \hline
\mc{32.96.3.bh.1} & \mc{16.48.1.bl.1}&\mc{32.96.5.bm.1} & \mc{16.48.1.bq.1}\\ \hline
\mc{32.96.3.bi.1} & \mc{16.48.1.bl.1}& \mc{32.96.5.bn.1}& \mc{16.48.1.bq.1}\\ \hline
\mc{32.96.3.bj.1} & \mc{16.48.1.bq.1}& \mc{32.96.5.bo.1}& \mc{16.48.1.bv.1}\\ \hline
\mc{32.96.3.bk.1} & \mc{16.48.1.bq.1}& \mc{32.96.5.bp.1}& \mc{16.48.1.bv.1}\\ \hline
\mc{32.96.3.bl.1} & \mc{16.48.1.bv.1}&\mc{32.96.5.bk.1} & \mc{16.48.1.bl.1} \\ \hline
\mc{32.96.3.bm.1} & \mc{16.48.1.bv.1}&\mc{32.96.5.bl.1} & \mc{16.48.1.bl.1}\\ \hline
\mc{32.96.3.bx.1} & \mc{16.48.1.cs.1}& \mc{32.96.5.bt.1} & \mc{16.48.1.cs.1}\\ \hline
\mc{32.96.3.ca.1} & \mc{16.48.1.dc.1}& \mc{32.96.5.bu.1}& \mc{16.48.1.cs.1}\\ \hline
\mc{32.96.3.cb.1} & \mc{16.48.1.de.1}& \mc{32.96.5.bv.1}& \mc{16.48.1.dc.1}\\ \hline
\mc{32.96.3.cc.1} & \mc{16.48.1.df.1}& \mc{32.96.5.by.1} & \mc{16.48.1.dc.1}\\ \hline
\mc{32.96.3.bn.1} & \mc{16.48.1.cc.1}&\mc{32.96.5.bw.1} & \mc{16.48.1.de.1}\\ \hline
\mc{32.96.3.bo.1} & \mc{16.48.1.cc.1}& \mc{32.96.5.bx.1}& \mc{16.48.1.df.1}\\ \hline
\mc{32.96.3.bq.1} & \mc{16.48.1.ch.1}& \mc{32.96.5.bz.1}& \mc{16.48.1.de.1}\\ \hline
\mc{32.96.3.bs.1} & \mc{16.48.1.ch.1}& \mc{32.96.5.ca.1}& \mc{16.48.1.df.1}\\ \hline
\mc{32.96.3.bp.1} & \mc{16.48.1.cg.1}& \mc{32.96.5.bj.1}& \mc{16.48.1.ch.1}\\ \hline
\mc{32.96.3.br.1} & \mc{16.48.1.cg.1}& \mc{32.96.5.bs.1}& \mc{16.48.1.ch.1}\\ \hline
\mc{11.110.4.a.1} & \mc{11.55.1.b.1}&\mc{32.96.5.bh.1} & \mc{16.48.1.cc.1}\\ \hline
\mc{11.110.4.b.1} & \mc{11.55.1.b.1}& \mc{32.96.5.bi.1} & \mc{16.48.1.cg.1}\\ \hline
\mc{16.96.4.b.1} & \mc{16.48.1.bq.1}&\mc{32.96.5.bq.1} & \mc{16.48.1.cc.1}\\ \hline
\mc{16.96.4.c.1} & \mc{16.48.1.bs.1} & \mc{32.96.5.br.1}& \mc{16.48.1.cg.1} \\ \hline
\mc{16.96.4.e.1} & \mc{16.48.1.bq.1} & \mc{32.96.7.be.1}& \mc{16.48.1.cs.1}\\ \hline
\mc{16.96.4.h.1} & \mc{16.48.1.bs.1} & \mc{32.96.7.bh.1}& \mc{16.48.1.dc.1}\\ \hline
\mc{16.96.4.i.1} & \mc{16.48.1.ca.1} & \mc{32.96.7.bi.1}& \mc{16.48.1.de.1}\\ \hline
\mc{16.96.4.j.1} & \mc{16.48.1.ca.1} &\mc{32.96.7.bj.1} & \mc{16.48.1.df.1}\\ \hline
\mc{16.96.4.l.1} & \mc{16.48.1.ca.1} & \mc{16.96.3.dc.1}& \mc{16.48.1.bn.1}\\ \hline
\mc{16.96.4.m.1} & \mc{16.48.1.cc.1} & \mc{16.96.3.dj.1} & \mc{16.48.1.bs.1}\\ \hline
\mc{16.96.4.n.1} & \mc{16.48.1.cc.1} & \mc{16.48.3.cc.1}& \mc{16.24.1.n.2} \\ \hline
\mc{16.96.4.o.1} & \mc{16.48.1.cc.1} & \mc{16.96.3.f.1}& \mc{16.48.1.bs.1}\\ \hline
\mc{16.96.4.p.1} & \mc{16.48.1.cc.1} & \mc{16.96.3.j.1}& \mc{16.48.1.bn.1}\\ \hline
\mc{16.96.5.a.1} & \mc{16.48.1.de.1} & \mc{16.48.3.bp.1}& \mc{16.24.1.n.2}\\ \hline
 
\mc{16.96.5.bg.1} & \mc{16.48.1.dc.1} & \mc{16.48.3.j.1}& \mc{16.24.1.l.1}\\ \hline
\mc{16.96.5.bh.1} & \mc{16.48.1.cr.1} & \mc{16.48.3.z.1}& \mc{16.24.1.l.1} \\ \hline
\mc{16.96.5.bi.1} & \mc{16.48.1.de.1} & \mc{16.48.3.bv.1}& \mc{16.24.1.l.1}\\ \hline
\mc{16.96.5.bj.1} & \mc{16.48.1.ct.1} &\mc{16.48.3.bv.2} & \mc{16.24.1.l.1}\\ \hline
\mc{16.96.5.bs.1} & \mc{16.48.1.cr.1} &\mc{16.96.3.be.1} &\mc{16.48.1.ca.1} \\ \hline

\mc{16.96.5.bt.1} & \mc{16.48.1.cs.1} & \mc{16.96.3.ex.1}& \mc{16.48.1.cx.1} \\ \hline
\mc{16.96.5.bt.2} & \mc{16.48.1.ct.1} & \mc{16.96.3.bp.1}& \mc{16.48.1.cf.1}\\ \hline
\mc{16.96.5.bx.1} & \mc{16.48.1.ct.1} & \mc{16.96.3.dn.1}& \mc{16.48.1.cf.1}\\ \hline
\mc{16.96.5.by.1} & \mc{16.48.1.cr.1} & \mc{16.96.3.do.1} & \mc{16.48.1.ca.1}\\ \hline
 
\mc{16.96.5.ch.2} & \mc{16.48.1.dc.1} & \mc{16.96.3.ex.2} & \mc{16.48.1.cx.2}\\ \hline
\mc{16.96.5.ci.1} & \mc{16.48.1.df.1} & \mc{16.96.3.ez.1}& \mc{16.48.1.cx.2}\\ \hline
\mc{16.96.5.ci.2} & \mc{16.48.1.de.1} & \mc{16.96.3.ez.2}&\mc{16.48.1.cx.1} \\ \hline

\end{longtable}

\begin{longtable}[H]{|p{2.5 cm}|p{3 cm}|p{2.5 cm}|p{2.5 cm}|p{2.5 cm}|p{2.5 cm}|}
\caption{LMFDB labels of positive rank bielliptic prime power level modular curves that are not hyperelliptic and not in Table \ref{tab:LMFDB deductions}.\label{tab:posrankbi}}\\
 \hline
\hline
 \multicolumn{6}{| p{2.5 cm} |}{}\\

\mc{16.48.3.x.1} & \mc{16.96.3.ds.1} & \mc{16.96.3.dw.1} & \mc{16.96.5.bq.1} & \mc{16.96.5.cf.1} & \mc{16.96.5.ej.1} \\ \hline
\mc{16.96.5.l.2} & \mc{16.96.5.s.1} & \mc{27.108.7.a.1} & \mc{27.108.7.g.1} & \mc{27.36.3.a.1} & \mc{32.48.3.b.1} \\ \hline
\mc{32.48.3.b.2} & \mc{32.96.5.a.1} & \mc{32.96.5.a.2} & \mc{32.96.5.be.1} & \mc{32.96.5.be.2} & \mc{32.96.5.c.1} \\ \hline
\mc{32.96.5.f.2} & \mc{32.96.5.h.1} & \mc{32.96.5.i.1} & \mc{32.96.5.m.1} & \mc{32.96.5.n.2} & \mc{32.96.5.p.1} \\ \hline
\mc{32.96.5.p.2} & \mc{37.114.4.a.1} & \mc{64.96.5.a.1} & \mc{64.96.5.a.2} & \mc{64.96.5.d.1} & \mc{81.108.7.a.1} \\ \hline
\mc{43.44.3.a.1} & \mc{53.54.4.a.1} & \mc{61.62.4.a.1} & \mc{79.80.6.a.1} & \mc{83.84.7.a.1} & \mc{89.90.7.a.1} \\ \hline
\mc{101.102.8.a.1} & \mc{131.132.11.a.1} & & & & \\ \hline
 \end{longtable}
\bibliographystyle{amsplain}
\bibliography{references}

@article {rzb,
    AUTHOR = {Rouse, Jeremy and Zureick-Brown, David},
     TITLE = {Elliptic curves over {$\mathbb{Q}$} and 2-adic images of {G}alois},
   JOURNAL = {Res. Number Theory},
  FJOURNAL = {Research in Number Theory},
    VOLUME = {1},
      YEAR = {2015},
     PAGES = {Paper No. 12, 34},
      ISSN = {2522-0160},
   MRCLASS = {11G05 (11F80)},
  MRNUMBER = {3500996},
MRREVIEWER = {\'{A}lvaro Lozano-Robledo},
       DOI = {10.1007/s40993-015-0013-7},
       URL = {https://doi.org/10.1007/s40993-015-0013-7},
}

@article {MR1055774,
    AUTHOR = {Harris, Joe and Silverman, Joe},
     TITLE = {Bielliptic curves and symmetric products},
   JOURNAL = {Proc. Amer. Math. Soc.},
  FJOURNAL = {Proceedings of the American Mathematical Society},
    VOLUME = {112},
      YEAR = {1991},
    NUMBER = {2},
     PAGES = {347--356},
      ISSN = {0002-9939},
   MRCLASS = {11G30 (14H25)},
  MRNUMBER = {1055774},
MRREVIEWER = {Sheldon Kamienny},
       DOI = {10.2307/2048726},
       URL = {https://doi.org/10.2307/2048726},
}

@article {barsx0,
    AUTHOR = {Bars, Francesc},
     TITLE = {Bielliptic modular curves},
   JOURNAL = {J. Number Theory},
  FJOURNAL = {Journal of Number Theory},
    VOLUME = {76},
      YEAR = {1999},
    NUMBER = {1},
     PAGES = {154--165},
      ISSN = {0022-314X},
   MRCLASS = {11G18 (11G30)},
  MRNUMBER = {1688168},
MRREVIEWER = {Joseph H. Silverman},
       DOI = {10.1006/jnth.1998.2343},
       URL = {https://doi-org.wv-o-ursus-proxy02.ursus.maine.edu/10.1006/jnth.1998.2343},
}

@article {ogg,
    AUTHOR = {Ogg, Andrew P.},
     TITLE = {Hyperelliptic modular curves},
   JOURNAL = {Bull. Soc. Math. France},
  FJOURNAL = {Bulletin de la Soci\'{e}t\'{e} Math\'{e}matique de France},
    VOLUME = {102},
      YEAR = {1974},
     PAGES = {449--462},
      ISSN = {0037-9484},
   MRCLASS = {14G05 (10D05 14H45)},
  MRNUMBER = {364259},
MRREVIEWER = {Kuang-yen Shih},
       URL = {http://www.numdam.org.wv-o-ursus-proxy02.ursus.maine.edu/item?id=BSMF_1974__102__449_0},
}

@inproceedings {mazur,
    AUTHOR = {Mazur, B.},
     TITLE = {Rational points on modular curves},
 BOOKTITLE = {Modular functions of one variable, {V} ({P}roc. {S}econd
              {I}nternat. {C}onf., {U}niv. {B}onn, {B}onn, 1976)},
    SERIES = {Lecture Notes in Math., Vol. 601},
     PAGES = {107--148},
 PUBLISHER = {Springer, Berlin-New York},
      YEAR = {1977},
   MRCLASS = {14G25 (10D15 14H25 14K15)},
  MRNUMBER = {450283},
MRREVIEWER = {M. A. Kenku},
}

@article{rszb,
   title={-adic images of Galois for elliptic curves over (and an appendix with John Voight)},
   volume={10},
   ISSN={2050-5094},
   url={http://dx.doi.org/10.1017/fms.2022.38},
   DOI={10.1017/fms.2022.38},
   journal={Forum of Mathematics, Sigma},
   publisher={Cambridge University Press (CUP)},
   author={Rouse, Jeremy and Sutherland, Andrew V. and Zureick-Brown, David},
   year={2022} }

@article {silvermanharris,
    AUTHOR = {Harris, Joe and Silverman, Joe},
     TITLE = {Bielliptic curves and symmetric products},
   JOURNAL = {Proc. Amer. Math. Soc.},
  FJOURNAL = {Proceedings of the American Mathematical Society},
    VOLUME = {112},
      YEAR = {1991},
    NUMBER = {2},
     PAGES = {347--356},
      ISSN = {0002-9939},
   MRCLASS = {11G30 (14H25)},
  MRNUMBER = {1055774},
MRREVIEWER = {Sheldon Kamienny},
       DOI = {10.2307/2048726},
       URL = {https://doi-org.wv-o-ursus-proxy02.ursus.maine.edu/10.2307/2048726},
}

@article {biellquot,
    AUTHOR = {Bars, Francesc and Kamel, Mohamed and Schweizer, Andreas},
     TITLE = {Bielliptic quotient modular curves of {$X_0(N)$}},
   JOURNAL = {Math. Comp.},
  FJOURNAL = {Mathematics of Computation},
    VOLUME = {92},
      YEAR = {2023},
    NUMBER = {340},
     PAGES = {895--929},
      ISSN = {0025-5718},
   MRCLASS = {11G18 (11Y40 14G35 14H45 65H14)},
  MRNUMBER = {4524112},
MRREVIEWER = {Jean Kieffer},
       DOI = {10.1090/mcom/3800},
       URL = {https://doi-org.wv-o-ursus-proxy02.ursus.maine.edu/10.1090/mcom/3800},
}

@article {biellquotsqfree,
    AUTHOR = {Bars, Francesc and Gonz\'{a}lez, Josep and Kamel, Mohamed},
     TITLE = {Bielliptic quotient modular curves with {$N$} square-free},
   JOURNAL = {J. Number Theory},
  FJOURNAL = {Journal of Number Theory},
    VOLUME = {216},
      YEAR = {2020},
     PAGES = {380--402},
      ISSN = {0022-314X},
   MRCLASS = {11G18 (11Y50 14G35)},
  MRNUMBER = {4130087},
       DOI = {10.1016/j.jnt.2020.03.010},
       URL = {https://doi-org.wv-o-ursus-proxy02.ursus.maine.edu/10.1016/j.jnt.2020.03.010},
}

@article {biellatkins,
    AUTHOR = {Bars, Francesc and Gonz\'{a}lez, Josep},
     TITLE = {Bielliptic modular curves {$X^*_0(N)$}},
   JOURNAL = {J. Algebra},
  FJOURNAL = {Journal of Algebra},
    VOLUME = {559},
      YEAR = {2020},
     PAGES = {726--759},
      ISSN = {0021-8693},
   MRCLASS = {11G18 (11G30 14G35)},
  MRNUMBER = {4108335},
MRREVIEWER = {Abbey Marie Bourdon},
       DOI = {10.1016/j.jalgebra.2020.02.028},
       URL = {https://doi-org.wv-o-ursus-proxy02.ursus.maine.edu/10.1016/j.jalgebra.2020.02.028},
}

@article {biellx1,
    AUTHOR = {Jeon, Daeyeol and Kim, Chang Heon},
     TITLE = {Bielliptic modular curves {$X_1(N)$}},
   JOURNAL = {Acta Arith.},
  FJOURNAL = {Acta Arithmetica},
    VOLUME = {112},
      YEAR = {2004},
    NUMBER = {1},
     PAGES = {75--86},
      ISSN = {0065-1036},
   MRCLASS = {11G18 (11G30)},
  MRNUMBER = {2040593},
MRREVIEWER = {Mihran Papikian},
       DOI = {10.4064/aa112-1-5},
       URL = {https://doi-org.wv-o-ursus-proxy02.ursus.maine.edu/10.4064/aa112-1-5},
}

@article {biellint,
    AUTHOR = {Jeon, Daeyeol and Kim, Chang Heon and Schweizer, Andreas},
     TITLE = {Bielliptic intermediate modular curves},
   JOURNAL = {J. Pure Appl. Algebra},
  FJOURNAL = {Journal of Pure and Applied Algebra},
    VOLUME = {224},
      YEAR = {2020},
    NUMBER = {1},
     PAGES = {272--299},
      ISSN = {0022-4049},
   MRCLASS = {11G18 (11G05 14H37 14H45)},
  MRNUMBER = {3986422},
MRREVIEWER = {Andrew Fiori},
       DOI = {10.1016/j.jpaa.2019.05.007},
       URL = {https://doi-org.wv-o-ursus-proxy02.ursus.maine.edu/10.1016/j.jpaa.2019.05.007},
}

@article {suthzyw,
    AUTHOR = {Sutherland, Andrew V. and Zywina, David},
     TITLE = {Modular curves of prime-power level with infinitely many
              rational points},
   JOURNAL = {Algebra Number Theory},
  FJOURNAL = {Algebra \& Number Theory},
    VOLUME = {11},
      YEAR = {2017},
    NUMBER = {5},
     PAGES = {1199--1229},
      ISSN = {1937-0652},
   MRCLASS = {14G35 (11F80 11G05 14G05)},
  MRNUMBER = {3671434},
MRREVIEWER = {Jie Shu},
       DOI = {10.2140/ant.2017.11.1199},
       URL = {https://doi.org/10.2140/ant.2017.11.1199},
}

@misc{Zywina_github,
  author = {Zywina, David},
  title = {OpenImages},
  year = {2023},
  publisher = {GitHub},
  journal = {GitHub repository},
  howpublished = {\url{https://github.com/davidzywina/OpenImage}}
}

@misc{zywina2024explicitopenimageselliptic,
      title={Explicit open images for elliptic curves over $\mathbb{Q}$}, 
      author={David Zywina},
      year={2024},
      eprint={2206.14959},
      archivePrefix={arXiv},
      primaryClass={math.NT},
      url={https://arxiv.org/abs/2206.14959}, 
}

@article {MR1484478,
    AUTHOR = {Bosma, Wieb and Cannon, John and Playoust, Catherine},
     TITLE = {The {M}agma algebra system. {I}. {T}he user language},
      NOTE = {Computational algebra and number theory (London, 1993)},
   JOURNAL = {J. Symbolic Comput.},
  FJOURNAL = {Journal of Symbolic Computation},
    VOLUME = {24},
      YEAR = {1997},
    NUMBER = {3-4},
     PAGES = {235--265},
      ISSN = {0747-7171},
   MRCLASS = {68Q40},
  MRNUMBER = {MR1484478},
       DOI = {10.1006/jsco.1996.0125},
       URL = {http://dx.doi.org/10.1006/jsco.1996.0125},
}

@book {MR2464941,
    AUTHOR = {Stichtenoth, Henning},
     TITLE = {Algebraic function fields and codes},
    SERIES = {Graduate Texts in Mathematics},
    VOLUME = {254},
   EDITION = {Second},
 PUBLISHER = {Springer-Verlag, Berlin},
      YEAR = {2009},
     PAGES = {xiv+355},
      ISBN = {978-3-540-76877-7},
   MRCLASS = {14H05 (11R58 11T71 14G15 14G50 94B27)},
  MRNUMBER = {2464941},
}

@book{silverman2009arithmetic,
  title={The arithmetic of elliptic curves},
  author={Silverman, Joseph H},
  volume={106},
  year={2009},
  publisher={Springer}
}

@manual{sagemath,
    label        = {Sag95},
    author       = {{The Sage Developers}},
    title        = {{S}age{M}ath, the {S}age {M}athematics {S}oftware {S}ystem},
    url          = {https://www.sagemath.org},
    version      = {10.7},
    year         = {2025},
    note         = {DOI 10.5281/zenodo.6259615},
}

@article {conrad,
    AUTHOR = {Conrad, Brian},
     TITLE = {Chow's {$K/k$}-image and {$K/k$}-trace, and the
              {L}ang-{N}\'eron theorem},
   JOURNAL = {Enseign. Math. (2)},
  FJOURNAL = {L'Enseignement Math\'ematique. Revue Internationale. 2e
              S\'erie},
    VOLUME = {52},
      YEAR = {2006},
    NUMBER = {1-2},
     PAGES = {37--108},
      ISSN = {0013-8584},
   MRCLASS = {14K05 (11G10 11G50)},
  MRNUMBER = {2255529},
MRREVIEWER = {Alessandra\ Bertapelle},
}

@article {MR4833876,
    AUTHOR = {Zywina, David},
     TITLE = {Open image computations for elliptic curves over number
              fields},
   JOURNAL = {Res. Number Theory},
  FJOURNAL = {Research in Number Theory},
    VOLUME = {11},
      YEAR = {2025},
    NUMBER = {1},
     PAGES = {Paper No. 1, 24},
      ISSN = {2522-0160},
   MRCLASS = {11G05 (11F80)},
  MRNUMBER = {4833876},
       DOI = {10.1007/s40993-024-00599-2},
       URL = {https://doi.org/10.1007/s40993-024-00599-2},
}

@article{cassels,
  title={Arithmetic on Curves of Genus 1 (V). Two Counter-Examples},
  author={Cassels, JWS},
  journal={Journal of the London Mathematical Society},
  volume={1},
  number={1},
  pages={244--248},
  year={1963},
  publisher={Wiley Online Library}
}

@book {MR1291394,
    AUTHOR = {Shimura, Goro},
     TITLE = {Introduction to the arithmetic theory of automorphic
              functions},
    SERIES = {Publications of the Mathematical Society of Japan},
    VOLUME = {11},
      NOTE = {Reprint of the 1971 original,
              Kan\^{o} Memorial Lectures, 1},
 PUBLISHER = {Princeton University Press, Princeton, NJ},
      YEAR = {1994},
     PAGES = {xiv+271},
      ISBN = {0-691-08092-5},
   MRCLASS = {11Fxx (11-02 11G05 11G40)},
  MRNUMBER = {1291394},
}

@inproceedings {MR0337993,
    AUTHOR = {Deligne, P. and Rapoport, M.},
     TITLE = {Les sch\'{e}mas de modules de courbes elliptiques},
 BOOKTITLE = {Modular functions of one variable, {II} ({P}roc. {I}nternat.
              {S}ummer {S}chool, {U}niv. {A}ntwerp, {A}ntwerp, 1972)},
    SERIES = {Lecture Notes in Math., Vol. 349},
     PAGES = {143--316},
 PUBLISHER = {Springer, Berlin-New York},
      YEAR = {1973},
   MRCLASS = {14K10 (10D05)},
  MRNUMBER = {337993},
MRREVIEWER = {T. Oda},
}

@misc{lmfdb,
  shorthand    = {LMFDB},
  author       = {The {LMFDB Collaboration}},
  title        = {The {L}-functions and modular forms database},
  howpublished = {\url{https://www.lmfdb.org}},
  year         = {2025},
  note         = {[Online; accessed 7 September 2025]},
}

@misc{Zywina_lowgonality,
  author = {Zywina, David},
  title = {Classification of modular curves with low gonality},
  year = {2025},
  howpublished = {\url{https://davidzywina.github.io/}}
}

@book{hartshorne2013algebraic,
  title={Algebraic geometry},
  author={Hartshorne, Robin},
  volume={52},
  year={2013},
  publisher={Springer Science \& Business Media}
}

@article {MR2016709,
    AUTHOR = {Cummins, C. J. and Pauli, S.},
     TITLE = {Congruence subgroups of {${\rm PSL}(2,{\Bbb Z})$} of genus
              less than or equal to 24},
   JOURNAL = {Experiment. Math.},
  FJOURNAL = {Experimental Mathematics},
    VOLUME = {12},
      YEAR = {2003},
    NUMBER = {2},
     PAGES = {243--255},
      ISSN = {1058-6458},
   MRCLASS = {11F06 (11F12 11F22)},
  MRNUMBER = {2016709},
MRREVIEWER = {Mong Lung Lang},
       URL = {http://projecteuclid.org/euclid.em/1067634734},
}

@article{hindry1987points,
  title={Points quadratiques sur les courbes},
  author={Hindry, Marc},
  journal={CR Acad. Sci. Paris},
  volume={305},
  number={219-221},
  pages={1--1},
  year={1987}
}

@article {MR2853619,
    AUTHOR = {Kato, T. and Magaard, K. and V\"{o}lklein, H.},
     TITLE = {Bi-elliptic {W}eierstrass points on curves of genus 5},
   JOURNAL = {Indag. Math. (N.S.)},
  FJOURNAL = {Koninklijke Nederlandse Akademie van Wetenschappen.
              Indagationes Mathematicae. New Series},
    VOLUME = {22},
      YEAR = {2011},
    NUMBER = {1-2},
     PAGES = {116--130},
      ISSN = {0019-3577,1872-6100},
   MRCLASS = {14H55 (14H37 14H45)},
  MRNUMBER = {2853619},
MRREVIEWER = {Aristides\ I.\ Kontogeorgis},
       DOI = {10.1016/j.indag.2011.08.006},
       URL = {https://doi.org/10.1016/j.indag.2011.08.006},
}

@article {MR2403651,
    AUTHOR = {Paulhus, Jennifer},
     TITLE = {Decomposing {J}acobians of curves with extra automorphisms},
   JOURNAL = {Acta Arith.},
  FJOURNAL = {Acta Arithmetica},
    VOLUME = {132},
      YEAR = {2008},
    NUMBER = {3},
     PAGES = {231--244},
      ISSN = {0065-1036,1730-6264},
   MRCLASS = {14H40 (11G05 14H52)},
  MRNUMBER = {2403651},
MRREVIEWER = {Montserrat\ Teixidor i Bigas},
       DOI = {10.4064/aa132-3-3},
       URL = {https://doi.org/10.4064/aa132-3-3},
}

@incollection {MR2182037,
    AUTHOR = {Gutierrez, Jaime and Sevilla, D. and Shaska, T.},
     TITLE = {Hyperelliptic curves of genus 3 with prescribed automorphism
              group},
 BOOKTITLE = {Computational aspects of algebraic curves},
    SERIES = {Lecture Notes Ser. Comput.},
    VOLUME = {13},
     PAGES = {109--123},
 PUBLISHER = {World Sci. Publ., Hackensack, NJ},
      YEAR = {2005},
      ISBN = {981-256-459-4},
   MRCLASS = {14H37},
  MRNUMBER = {2182037},
MRREVIEWER = {Tomasz\ Szemberg},
       DOI = {10.1142/9789812701640\{_}0009}

@article {MR4730250,
    AUTHOR = {Rakvi},
     TITLE = {A classification of genus 0 modular curves with rational
              points},
   JOURNAL = {Math. Comp.},
  FJOURNAL = {Mathematics of Computation},
    VOLUME = {93},
      YEAR = {2024},
    NUMBER = {348},
     PAGES = {1859--1902},
      ISSN = {0025-5718,1088-6842},
   MRCLASS = {11F80},
  MRNUMBER = {4730250},
       DOI = {10.1090/mcom/3907},
       URL = {https://doi.org/10.1090/mcom/3907},
}
\end{document}